\newcommand{\be}{\begin{equation}}
\newcommand{\ee}{\end{equation}}
\newcommand{\CI}{{\mathcal C}^\infty }
\newcommand{\CIc}{{\mathcal C}^\infty_{\rm{c}} }
\newcommand{\cH}{{\mathcal H}}
\newcommand{\CC}{{\mathbb C}}
\newcommand{\NN}{{\mathbb N}}
\newcommand{\Q}{{\mathbb Q}}
\newcommand{\RR}{{\mathbb R}}
\newcommand{\TT}{{\mathbb T}}
\newcommand{\ZZ}{{\mathbb Z}}
\newcommand{\vol}{\operatorname{vol}}
\newcommand{\supp}{\operatorname{supp}}
\renewcommand{\Re}{\mathop{\rm Re}\nolimits}
\renewcommand{\Im}{\mathop{\rm Im}\nolimits}
\theoremstyle{plain}
\newtheorem{thm}{Theorem}
\newtheorem{prop}{Proposition}[section]
\newtheorem{cor}[prop]{Corollary}
\newtheorem{lem}[prop]{Lemma}
\theoremstyle{definition}
\numberwithin{equation}{section}
\def\bbbone{{\mathchoice {1\mskip-4mu {\rm{l}}} {1\mskip-4mu {\rm{l}}}
{ 1\mskip-4.5mu {\rm{l}}} { 1\mskip-5mu {\rm{l}}}}}
\def\squarebox#1{\hbox to #1{\hfill\vbox to #1{\vfill}}}
\newcommand{\Id}{{I}}
\title[Control for Schr\"odinger operators on 2-tori]
{Control for Schr\"odinger operators on 2-tori: rough potentials}
\author[J. Bourgain]{Jean Bourgain}
\address{School of Mathematics, Institute for Advanced Study,  Princeton, NJ 08540, USA}
\email{bourgain@math.ias.edu}
\author[N. Burq]{Nicolas Burq}
\address{Universit{\'e} Paris Sud,
Math{\'e}matiques,
B{\^a}t 425, 91405
Orsay Cedex, France,
and Ecole Normale Sup\'erieure, 45, rue d'Ulm, 75005
Paris,  Cedex 05,
France}
\email{Nicolas.burq@math.u-psud.fr}
\author[M. Zworski]{Maciej Zworski}
\address{Mathematics Department, University of California, Berkeley, \
CA 94720, USA}
\email{zworski@math.berkeley.edu}
\thanks{This research was partially supported by NSF grants DMS-0808042, DMS-0835373 (JB) and DMS-1201417 (MZ)}
\def\11{{\rm 1~\hspace{-1.4ex}l} }
\def\R{\mathbb R}
\def\C{\mathbb C}
\def\Z{\mathbb Z}
\def\N{\mathbb N}
\def\T{\mathbb T}
\begin{document}

\begin{abstract}
For the Schr\"odinger equation, $ ( i \partial_t + \Delta ) u = 0 $ on a torus, an arbitrary non-empty open 
set $ \Omega $ provides control and observability of the solution: $ \| u |_{ t = 0 }
\|_{ L^2 ( \T^2 ) } \leq K_T \| u \|_{ L^2 ( [0,T] \times \Omega )}  $.
We show that the same result remains true for $ ( i \partial_t +
\Delta - V ) u = 0 $ where $ V \in L^2 ( \T^2 ) $, and $ \T^2 $ is a
(rational or irrational) torus. That extends the
results of \cite{AM},  and \cite{BZ4} where the observability was proved for $ V \in C ( \T^2)
$ and conjectured for $ V \in L^\infty ( \T^2 ) $.  The higher
dimensional generalization remains open for $ V \in L^\infty ( \T^n ) $.
\end{abstract}   

\maketitle   

\section{Introduction}   
\label{in}

The purpose of this paper is to prove a case of the conjecture made by
the last two authors in \cite{BZ4}. It concerned control and 
observability for Schr\"odinger operators on tori with $ L^\infty $
potentials. Here we prove that for two dimensional tori the
desired results are valid for potentials which are merely in $ L^2 $.

To state the result consider
\[  \T^2 : = \RR^2 / A \ZZ \times B\ZZ \,, \ \ A, B  \in \RR \setminus \{
 0 \}\,,    \ \  V \in L^2 ( \T^2 ) , \]
\begin{equation}
\label{eq:st}   ( - \Delta + V ( z ) - \lambda ) u ( z) = f ( z ) \,,
\ \    z \in \T^2 , 
\end{equation}
and 
\begin{equation}
\label{eq:dy}
 i \partial_t u ( t, z ) = ( - \Delta + V( z ) ) u ( t , z) \,, \
\ z \in \T^2 \,, \ \ \ 
\end{equation}

The first theorem concerns solutions of the stationary 
Schr\"odinger equation and is applicable to high
energy eigenfunctions:
\begin{thm}
\label{t:1}
Let $ \Omega \subset \T^2 $ be a non-empty open set.
There exists a constant $ K = K ( \Omega ) $, depending only 
on $ \Omega $, such that 
for any solution of \eqref{eq:st} we have
\begin{equation}
\label{eq:t1}
\| u \|_{ L^2 ( \T^2) } \leq K \left( \| f \|_{ L^2 ( \T^2) }  + \| u \|_{ L^2 ( \Omega ) } \right) \,. 
\end{equation}
\end{thm}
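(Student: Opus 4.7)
My plan is to argue by contradiction, reduce to a high-frequency semiclassical problem, and absorb the $ L^2 $ potential using Bourgain's arithmetic $ L^4 $ estimate on $ \T^2 $. Suppose \eqref{eq:t1} fails; then one extracts sequences $ u_n, f_n \in L^2(\T^2) $ and $ \lambda_n \in \RR $ with $ \|u_n\|_{L^2} = 1 $, $ \|f_n\|_{L^2} + \|u_n\|_{L^2(\Omega)} \to 0 $, and $ (-\Delta + V - \lambda_n)u_n = f_n $. If $ \{\lambda_n\} $ stays bounded, Cauchy--Schwarz places $ V u_n $ in a fixed $ L^1(\T^2) \subset H^{-1-\varepsilon}(\T^2) $, so $ u_n $ is bounded in some $ H^s(\T^2) $ with $ s > 0 $; a subsequence converges strongly in $ L^2 $ to $ u_\infty $ with $ \|u_\infty\|_{L^2} = 1 $, $ (-\Delta + V - \lambda_\infty) u_\infty = 0 $ and $ u_\infty = 0 $ on $ \Omega $. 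Weak unique continuation for two-dimensional Schr\"odinger operators with $ V \in L^p $, $ p > 1 $ (Wolff, Koch--Tataru), forces $ u_\infty \equiv 0 $, a contradiction.

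For the genuinely new case $ \lambda_n \to \infty $ I would set $ h_n := \lambda_n^{-1/2} $ and rewrite
\begin{equation*}
  P_0(h_n) u_n = h_n^2 f_n - h_n^2 V u_n, \qquad P_0(h) := -h^2\Delta - 1,
\end{equation*}
then spectrally localize, introducing $ v_n := \chi(P_0(h_n)) u_n $ for some $ \chi \in \CIc(\RR) $ supported near $ 0 $ and equal to one on a neighborhood of $ 0 $; elliptic inversion of $ P_0(h) $ off its characteristic set makes the complement $ (1-\chi)(P_0(h_n)) u_n $ negligible in $ L^2 $. The decisive analytic input is the Bourgain--Zygmund $ L^4 $ bound on $ \T^2 $,
\begin{equation*}
 \bigl\| \chi(P_0(h)) w \bigr\|_{L^4(\T^2)} \leq C_\varepsilon h^{-\varepsilon} \|w\|_{L^2(\T^2)},
\end{equation*}
together with its $ L^{4/3} \to L^2 $ dual. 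Applying this bound to $ v_n $ and combining with H\"older's inequality yields $ \|V v_n\|_{L^{4/3}} \leq C_\varepsilon \|V\|_{L^2} h_n^{-\varepsilon} $; the dual bound then allows the rough-potential term $ h_n^2\chi(P_0(h_n))(Vv_n) $ to be absorbed, after careful tracking of the $ h_n^{o(1)} $ losses against the semiclassical scale $ h_n^2 $, into an $ L^2 $ source that vanishes in the limit. The quantitative matching of these losses with the quasimode scaling is the heart of the technical work, and is what forces $ \varepsilon $ to be chosen strictly smaller than $ 1 $.

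Once $ v_n $ is seen as a genuine semiclassical quasimode of the free $ (V = 0) $ Schr\"odinger operator on $ \T^2 $ with $ \|v_n\|_{L^2(\Omega)} \to 0 $, the observability from \cite{AM} (equivalently the $ V \in C(\T^2) $ case of \cite{BZ4}) applied to $ v_n $ forces $ \|v_n\|_{L^2} \to 0 $, contradicting $ \|u_n\|_{L^2} = 1 $. The main obstacle -- and the reason the argument is strictly two-dimensional -- is precisely the passage from $ V \in L^2 $ to a useful bound on $ V u_n $: this demands a genuine sub-polynomial improvement of $ L^p $-integrability of spectrally localized functions beyond $ L^2 $, and on $ \T^n $ such an improvement, with $ h^{o(1)} $ loss for unit-width spectral windows, is currently available only for $ n = 2 $ through the arithmetic $ L^4 $ bound. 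The lack of any such analogue on $ \T^n $ with $ n \geq 3 $ is precisely why that case is left open.
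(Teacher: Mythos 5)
Your bounded-$\lambda_n$ case (compactness plus unique continuation for $L^p$, $p>1$, potentials) is consistent with what the paper uses elsewhere. The high-frequency case, however, which is the whole point, has two genuine gaps. First, the ``decisive analytic input'' $\| \chi(P_0(h)) w\|_{L^4(\T^2)} \leq C_\varepsilon h^{-\varepsilon} \|w\|_{L^2}$ is false for a fixed cutoff $\chi$: the window $\{\,|h^2|n|^2-1| \lesssim 1\,\}$ corresponds in the notation of Proposition~\ref{prop.zygmund} to $\kappa \sim h^{-1}$, for which that proposition (and anything known) gives only $C h^{-1/2}$, i.e.\ no better than Sobolev, and a Knapp-type rectangle of $\sim h^{-2}$ lattice points inside the annulus saturates $h^{-1/2}$. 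A loss-free $L^4$ bound holds only for windows of width $O(1)$ in $|n|^2$ ($\kappa = O(1)$), but $u_n$ is not so localized, and truncating it to such a window produces an error that you can only estimate through $(1-\chi)(P_0)P_0^{-1}\bigl(h^2 f_n - h^2 V u_n\bigr)$, where $Vu_n$ is merely in $L^1$ (or in $L^{4/3}$ only after you already have an $L^4$ bound on $u_n$, which is circular; note also that the $L^{4/3}\to L^4$ resolvent bound of Proposition~\ref{theorem.resol} is proved only for $|\Im \tau|\geq 1$, not at real energies).

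Second, and more fundamentally, the absorption step has no spare power of $h$. The free ($V=0$) stationary observability of \cite{BZ4}, applied to $(-\Delta-\lambda_n)u_n = f_n - Vu_n$, reads in semiclassical form $\|v\|_{L^2}\leq K\bigl(h^{-2}\|(-h^2\Delta-1)v\|_{L^2}+\|v\|_{L^2(\Omega)}\bigr)$, so the factor $h_n^2$ you want to play against the $h_n^{-\varepsilon}$ losses is exactly cancelled by $\lambda_n=h_n^{-2}$; what remains is the requirement that $\|Vu_n\|$ be small in a norm the free estimate tolerates, which fails for $\|V\|_{L^2}$ of order one. What would rescue the scheme is a gain $\langle\lambda\rangle^{-\delta}$ on the forcing term in the free two-dimensional estimate, analogous to the $\langle\tau\rangle^{-1/2}$ gain in \eqref{eq:s1d} that lets the paper absorb $Wu$ in one dimension (and even there only for $W\in L^p$, $p>1$); no such gain is available on $\T^2$ for an arbitrary nonempty $\Omega$. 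This is precisely why the paper's perturbative argument, Corollary~\ref{cor.perturb} and the remark following it, is restricted to $\|V\|_{L^2}\leq\kappa$ small, and why for general $V$ the paper does not perturb off the free operator at all: it approximates $V$ by smooth $V_j$, proves the semiclassical observability of Proposition~\ref{th:3} with constants uniform in $j$ via defect measures, handles the resonant (isolated rational) directions through the normal form of Proposition~\ref{p:2} and the one-dimensional observability with the averaged potential $W_j$ (Propositions~\ref{prop.1d} and~\ref{cor.obs}), and only then passes to rough $V$ using the dispersive estimates of \S\ref{cd2} and a uniqueness--compactness step; Theorem~\ref{t:1} is then deduced from the dynamical Theorem~\ref{th.2}. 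In resonant directions the potential acts non-perturbatively (through its directional average), so your reduction to a free quasimode with $o(1)$ errors cannot work for large $\|V\|_{L^2}$.
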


Theorem \ref{t:1} can be deduced from the following dynamical result:
\begin{thm}
\label{th.2}
Let $ \Omega \subset \T^2$ be a non empty open set and let $ T > 0 $.
There exists a constant $ K  $, depending only 
on $ \Omega $, $ T $ and $ V $, such that 
for any solution of \eqref{eq:dy} we have
\begin{equation}
\label{eq:t2} \| u ( 0 , \bullet ) \|^2_{ L^2 ( \T^2) } \leq K  \int_0^T \|
u ( t , \bullet )  \|^2_{ L^2 ( \Omega ) } dt \,. 
\end{equation}
\end{thm}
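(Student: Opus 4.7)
The plan is to deduce Theorem~\ref{th.2} from the resolvent estimate \eqref{eq:t1} by decomposing the Schr\"odinger solution into spectral packets of $H := -\Delta + V$ and reassembling via an almost-orthogonality argument in time. Since $V\in L^2(\T^2)$ is form-bounded with relative bound zero with respect to $-\Delta$ on the two-torus (because $H^1(\T^2)\hookrightarrow L^p(\T^2)$ for every $p<\infty$), the operator $H$ is self-adjoint on $L^2(\T^2)$ with compact resolvent, giving an $L^2$-orthonormal eigenbasis $\{\phi_j\}$ with $H\phi_j=\lambda_j\phi_j$. Expanding $u(0)=\sum_j c_j\phi_j$, the solution of \eqref{eq:dy} is $u(t)=\sum_j c_j e^{-i\lambda_j t}\phi_j$ and $\|u(t)\|_{L^2(\T^2)}=\|u(0)\|_{L^2(\T^2)}$ for every $t$.

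Let $K$ denote the constant in \eqref{eq:t1}, fix $\delta>0$ with $K\delta<1/2$, and partition $\RR$ into intervals $I_n$ of width $\delta$ centered at $\lambda_n$. Let $P_n:=\mathbf{1}_{I_n}(H)$ and set $u_n(t):=P_n u(t)=e^{-itH}P_n u(0)$. Then $f_n(t):=(H-\lambda_n)u_n(t)$ satisfies $\|f_n(t)\|_{L^2}\leq (\delta/2)\|P_n u(0)\|_{L^2}$ by the spectral theorem, so applying Theorem~\ref{t:1} at energy $\lambda_n$ and absorbing the $K\delta/2$-term gives, for every $t\in[0,T]$,
\[
\|P_n u(0)\|_{L^2(\T^2)}=\|u_n(t)\|_{L^2(\T^2)}\leq C_K\,\|u_n(t)\|_{L^2(\Omega)}.
\]
Squaring, integrating in $t\in[0,T]$, and summing over $n$ using $\sum_n\|P_n u(0)\|^2=\|u(0)\|^2$ reduces Theorem~\ref{th.2} to the almost-orthogonality estimate
\[
\sum_n\int_0^T\|P_n u(t)\|^2_{L^2(\Omega)}\,dt\leq C(T)\int_0^T\|u(t)\|^2_{L^2(\Omega)}\,dt.
\]

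This reassembly is the main obstacle in the reduction. Writing $P_n u(t)=e^{-i\lambda_n t}w_n(t)$, each $w_n$ oscillates internally only at frequencies in $[-\delta/2,\delta/2]$ while the carrier frequencies $\lambda_n$ are $\delta$-separated, so the cross terms $\int_0^T\langle P_n u,P_m u\rangle_{L^2(\Omega)}\,dt$ carry base oscillations of frequency at least $\delta|n-m|$. A vector-valued Beurling--Ingham inequality, uniform in the packet index, then yields the required bound once $T$ exceeds a threshold of the order $2\pi/\delta$; arbitrary $T>0$ is recovered by iteration together with the invariance of the problem under time translation. The reduction sketched here is essentially the one already used in \cite{BZ4} for continuous $V$; the genuinely new ingredient is Theorem~\ref{t:1} for rough potentials $V\in L^2(\T^2)$, whose proof (using Bourgain-type Strichartz bounds on $\T^2$) is what makes the constant $K$ in the packet estimate independent of the energy $\lambda_n$ and thus allows the above scheme to work at arbitrarily high frequencies.
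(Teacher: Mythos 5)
There is a genuine gap, and it is structural: your argument takes Theorem~\ref{t:1} as an available input, but in this paper the logical order is the reverse --- the stationary estimate \eqref{eq:t1} is \emph{deduced from} the dynamical Theorem~\ref{th.2}, and no independent proof of \eqref{eq:t1} for $V\in L^2(\T^2)$ is given or known to you. The Strichartz/$L^4$ bounds of \S\ref{apri} that you invoke (``Bourgain-type Strichartz bounds'') are used in the paper only as perturbation tools (to approximate $V$ by smooth $V_j$ and to control $e^{-it(-\Delta+V)}-e^{-it(-\Delta+V_j)}$); they do not by themselves yield the observability-with-loss resolvent estimate \eqref{eq:t1}, which carries the geometric control/unique continuation content that is the actual difficulty. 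The paper's proof instead establishes, uniformly along a smooth approximating sequence $V_j\to V$ in $L^2$, a semiclassical observability estimate (Proposition~\ref{th:3}) via semiclassical defect measures, a splitting into irrational, dense rational and isolated rational directions, a normal form reducing the isolated directions to the one-dimensional observability of Proposition~\ref{prop.1d} (where the stationary estimate with the $\langle\tau\rangle^{-1/2}$ gain, Proposition~\ref{pL:1}, appears --- in one dimension, for $W\in L^p$), then passes to $V$ by the continuity result \eqref{eq.diff} and removes the low-frequency remainder by the uniqueness--compactness argument of \cite{BLR} (or Appendix~A). Your proposal replaces all of this by an appeal to a statement that, within this paper, is a corollary of the theorem you are proving; as written it is circular.

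Even granting \eqref{eq:t1} as a black box, the reassembly step does not give the theorem as stated. The wave-packet/Ingham reduction from a $\lambda$-uniform resolvent condition (as in \cite{BZ2}, \cite{Mi}) produces observability only for $T$ above a threshold determined by the constants: you must take $\delta\lesssim 1/K$, and the vector-valued Ingham inequality then requires $T\gtrsim 2\pi/\delta\sim K$. Your claim that ``arbitrary $T>0$ is recovered by iteration together with the invariance of the problem under time translation'' runs in the wrong direction: time translation and iteration let one pass from observability in a short time to observability in a longer time (trivially, since $\int_0^{T'}\geq\int_0^T$ for $T'\geq T$), never from a long time to a shorter one. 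To reach all $T>0$ one needs a gain in the resolvent estimate at high energy (a factor $\langle\lambda\rangle^{-1/2}$ in front of $\|f\|$, as in Proposition~\ref{pL:1}), which \eqref{eq:t1} does not provide, or an argument of the type the paper actually uses, where the constant in Proposition~\ref{th:3} is extracted for every fixed $T>0$ from the structure of the defect measures.
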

An estimate of this type is called {\em an observability}  result.
Once we have it, the  HUM
method  (see~\cite{Li}) automatically provides the following {\em
  control} result:
\begin{thm}
\label{t:3}
Let $ \Omega \subset \T^2$ be any nonempty open set and let $ T > 0 $. For any $u_0 \in L^2(\T^2)$, there exists $f\in L^2([0,T]\times \Omega)$ such that the solution of the equation
$$ (i\partial_t +\Delta - V(z) ) u (t,z)= f \bbbone_{[0,T] \times
  \Omega}(t,z) \,,  \qquad  u ( 0 , \bullet )  = u_0\,, $$
satisfies 
$$ u ( T , \bullet )  \equiv 0 \,. $$
\end{thm}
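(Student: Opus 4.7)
The plan is to apply the Hilbert Uniqueness Method (HUM) of Lions~\cite{Li}, which converts the existence of the control $f$ into the minimization of a coercive quadratic functional on $L^2(\T^2)$ whose coercivity is exactly the observability estimate of Theorem~\ref{th.2}. Before running the machinery, I would record the $L^2$--well-posedness of \eqref{eq:dy}: in two dimensions, $V \in L^2(\T^2)$ is a form-bounded perturbation of $-\Delta$ with relative bound zero (via $H^1(\T^2) \hookrightarrow L^p(\T^2)$ for every finite $p$), so the KLMN theorem realizes $-\Delta + V$ as a self-adjoint operator and $e^{-it(-\Delta+V)}$ is a strongly continuous unitary group on $L^2(\T^2)$. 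In particular, the backward adjoint problem
\[ (i\partial_t + \Delta - V)\varphi = 0, \qquad \varphi(T,\cdot) = \varphi_T \in L^2(\T^2) \]
has a unique solution $\varphi \in C([0,T]; L^2(\T^2))$ satisfying $\|\varphi(t,\cdot)\|_{L^2} \equiv \|\varphi_T\|_{L^2}$.

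Next I would introduce the HUM functional
\[ J(\varphi_T) \defeq \tfrac{1}{2}\int_0^T \|\varphi(t,\cdot)\|_{L^2(\Omega)}^2 \, dt + \Re\bigl( i\, \langle u_0, \varphi(0,\cdot)\rangle_{L^2(\T^2)} \bigr). \]
The functional $J$ is continuous and strictly convex on $L^2(\T^2)$. Its coercivity is the one place where Theorem~\ref{th.2} enters: by unitarity $\|\varphi_T\|_{L^2} = \|\varphi(0,\cdot)\|_{L^2}$, and applying the observability inequality \eqref{eq:t2} to $\varphi$ in place of $u$ gives
\[ \int_0^T \|\varphi(t,\cdot)\|_{L^2(\Omega)}^2 \, dt \;\geq\; K^{-1}\, \|\varphi_T\|_{L^2(\T^2)}^2, \]
so $J(\varphi_T) \geq (2K)^{-1}\|\varphi_T\|^2 - \|u_0\|\|\varphi_T\|$. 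The direct method in the calculus of variations then produces a unique minimizer $\psi_T \in L^2(\T^2)$ of $J$.

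Finally, I would extract the control from the Euler--Lagrange equation at $\psi_T$. A standard integration by parts over $[0,T]\times\T^2$ against the adjoint equation --- using self-adjointness of $-\Delta+V$ and unitarity of the group --- shows that
\[ f(t,z) \defeq -i\, \overline{\psi(t,z)}\,\bbbone_{[0,T]\times\Omega}(t,z) \;\in\; L^2([0,T]\times\Omega) \]
drives $u_0$ to $0$ at time $T$: the duality identity $\langle f, \varphi\rangle_{L^2([0,T]\times\Omega)} = -i\,\langle u_0,\varphi(0,\cdot)\rangle$ holds for every $\varphi_T$, and unwinding the integration by parts gives $\langle u(T,\cdot),\varphi_T\rangle = 0$ for all $\varphi_T \in L^2(\T^2)$.

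The only substantive input is the observability estimate of Theorem~\ref{th.2}; everything else is the textbook HUM duality. The roughness of $V$ is absorbed entirely in the self-adjoint realization step above, so no genuine new obstacle arises --- this is precisely why the authors write that HUM \emph{automatically} provides the control result.
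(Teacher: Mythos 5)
Your proposal follows exactly the paper's route: the paper obtains Theorem \ref{t:3} from the observability estimate of Theorem \ref{th.2} by simply invoking the HUM method of Lions \cite{Li}, and your write-up merely fills in that standard duality argument (self-adjoint realization of $-\Delta+V$, coercivity of the quadratic functional via \eqref{eq:t2} and unitarity, minimization, Euler--Lagrange identity). One bookkeeping slip: with your sesquilinear pairing and your functional $J$, the Euler--Lagrange equation reads $\int_0^T\!\int_\Omega \psi\,\overline{\varphi}\,dz\,dt = -i\langle u_0,\varphi(0,\cdot)\rangle$, so the control should be $f=\psi\,\bbbone_{[0,T]\times\Omega}$ (no complex conjugate, no factor $-i$); the formula $f=-i\,\overline{\psi}\,\bbbone_{[0,T]\times\Omega}$ as written does not satisfy the duality identity you state, though this does not affect the structure of the argument.
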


In the case of $ V \equiv 0 $ (and rational tori) 
the estimates \eqref{eq:t1} and~\eqref{eq:t2} were proved
by Jaffard \cite{Ja} and 
Haraux \cite{Ha} using Kahane's work \cite{Ka}
on lacunary Fourier series.  For $ V \in \CI ( \T^2 ) $ the
results above were proved by the last two authors \cite{BZ4} 
and for a class potentials including continuous potentials on $ \T^n $,
by Anantharaman-Macia \cite{AM}. The paper \cite{AM}
resolves other questions concerning semiclassical measures on 
tori and contains further references; see also \cite{B}.
For a presentation of other aspects of control theory for the Schr\"odinger equation
we refer to \cite{Le} -- see also \cite[\S 3]{BZ2}.

The paper is organized as follows. In \S \ref{apri} we present
dispersive estimates which allow approximation of rough
potentials by smooth potentials.  In \S \ref{odo} we refine some of the one
dimensional observability estimates and show that they hold
for potentials $ W \in L^p ( \T^1 ) $, $ p > 1$.  The next
\S \ref{semi} is devoted to semiclassical observability estimates
for a family of smooth potentials compact in $ L^2 ( \T^2 ) $. In the following
section an observability result is proved for
general tori with constants uniform in a compact set in $ L^2 $ (Proposition \ref{th.1} i).
Combined with the results from \S \ref{apri} that gives
the proof of the theorem.

\section{A priori estimates for solutions to Schr\"odinger equations}
\label{apri} 
The proof of observability for rough potentials will follow from
observability for smooth potentials with estimates controlled by
constants depending only on $ L^2 $ norms of the potential. 
The approximation argument uses dispersion estimates for the
Schr\"odinger goup on the torus and 
we first show that these estimates hold in the presence of a potential.

\subsection{The case of $ \T^1$.} 
\label{cd1}
We start with the simpler case of
one dimensional equations. It will be needed in \S \ref{odo} but it also
introduces
the idea of the proof in an elementary setting.

We first  make some general comments. The operator $ -\partial_x^2 + W $, $ W
\in L^1 ( \T^1 ) $ is defined by Friedrich's extension (see for
instance \cite[Theorem 4.10]{DiSj}) using the quadratic form 
\[   q ( v , v ) = \int_{\T^1  }  \left( | \partial_x v ( x )  |^2  +
  W ( x ) | v ( x ) |^2 \right) dx , \ \ v \in H^1 ( \T^1 ) , \]
which is bounded from below since
\[ \begin{split}  | \int_{ \T^1}  W ( x ) | v ( x)|^2  dx| & \leq C \| W \|_{ L^1 }
\| u \|_{L^\infty }^2 \leq C \| W \|_{ L^1 }
\| \partial_x v \|_{L^2 } \| v \|_{L^2 } \\
&  \leq - C \epsilon 
 \| W \|_{ L^1 }
\| \partial_x v \|_{L^2 }^2 - \frac C  \epsilon \| W \|_{L^1} \| v
\|_{L^2 }^2 . 
\end{split} 
\] 
Hence $ P= - \partial_x^2 + W $ defined on $ C^\infty ( \T^1 ) $ 
has a unique self-adjoint extension with the domain 
containing $ H^1 ( \T^1 ) $. When $ W \in L^2 ( \T^1 ) $ the
operator is self-adjoint with the domain $ H^2 ( \T^1 ) $.
The resolvent, $ ( - \partial_x^2 + W - z )^{-1} $ , $ z \notin \RR $
is compact and the spectrum is discrete with eigenvalues  $\lambda_j 
\to + \infty $.

The following estimate applies to solutions of the Schr\"odinger
equation satisfying Floquet periodicity conditions:
\begin{equation}
\label{eq:Fl} 
v ( x + 2 \pi ) = e^{ 2 \pi i k } v ( x ) ,
\end{equation}
or equivalently to solutions of the Schr\"odinger equation with 
$ \partial_x $ replaced by $ \partial_x + i k $. (We note that
$ u ( x ) := e^{ - i k x } v ( x ) $ is periodic and $ \partial_x v (
x) = e^{i k x } ( \partial_x + ik ) u ( x ) $.)

\begin{prop}
\label{p:WW}
For any  $W \in L^2( \T^1)$, there exists $C>0$ such that for any $k
\in [ 0 , 1) $,  and $ u_0\in L^2( \T^1)$
the solution to the Schr\"odinger equation 
\begin{equation}\label{eq.sch}
(i \partial_t +  (\partial_x + i k )^2 - W) u =0, \qquad v\mid_{t=0} = u_0
\end{equation}
 satisfies 
\begin{equation}\label{eq.est3}
 \| u \|_{L^\infty(\T^1_x; L^2( 0,T))}\leq C(1+\sqrt{T}) (1+ \|W\|_{L^2( \T^1)})\|u_0\|_{L^2( \T^1)}.
\end{equation} 
\end{prop}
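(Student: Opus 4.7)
The plan is to first establish the free case $W = 0$ and then bootstrap via Duhamel. For $W = 0$, I would expand $u_0 = \sum_{n \in \Z} c_n e^{inx}$ so that $u(t,x) = \sum_n c_n e^{inx - i(n+k)^2 t}$. With a smooth cutoff $\chi_T \geq \mathbf{1}_{[0,T]}$ (obtained by rescaling a fixed $\chi_0 \in C_c^\infty(\R)$ for $T \geq 1$, and taking $\chi_0$ itself for $T \leq 1$), Plancherel in $t$ yields
\[
\int \chi_T(t) |u(t,x)|^2 dt = \sum_{n,m} c_n \bar c_m e^{i(n-m)x}\, \widehat{\chi_T}\bigl((n-m)(n+m+2k)\bigr).
\]
The diagonal $n=m$ contributes $\widehat{\chi_T}(0)\|u_0\|_{L^2}^2 = O(T)\|u_0\|_{L^2}^2$. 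The off-diagonal part is bounded by Schur's test: since $|\widehat{\chi_T}(\xi)| \leq CT(1+T|\xi|)^{-N}$, for fixed $n$ the sum over $m \neq n$ of $|\widehat{\chi_T}((n-m)(n+m+2k))|$ is $O(T)$, uniformly in $k \in [0,1)$; the at-most-finitely-many resonant $m \neq n$ (where the argument vanishes, e.g.\ $m = -n$ when $k = 0$, $m = -n-1$ when $k = 1/2$) each contribute $O(T)$, and remaining terms are summed via the decay of $\widehat{\chi_T}$. This yields
\[
\|e^{it(\partial_x + ik)^2}u_0\|_{L^\infty_x L^2_t(0,T)} \leq C(1+\sqrt T)\|u_0\|_{L^2}
\]
with $C$ independent of $k$.

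To handle the potential, Duhamel gives $u(t) = U_k(t)u_0 - i\int_0^t U_k(t-s)(Wu)(s) ds$ with $U_k(t) = e^{it(\partial_x+ik)^2}$. The $TT^*$ dualization of the free bound produces the non-retarded inhomogeneous estimate
\[
\bigl\|\int_0^{T_0} U_k(t-s) F(s)\, ds\bigr\|_{L^\infty_x L^2_t(0,T_0)} \leq C(1+T_0)\|F\|_{L^1_x L^2_t(0,T_0)},
\]
and the retarded variant follows from the Christ--Kiselev lemma (or a direct splitting into small subintervals). Applying this with $F = Wu$ and using H\"older in $x$ together with unitarity of the propagator of the self-adjoint operator $-(\partial_x+ik)^2 + W$,
\[
\|Wu\|_{L^1_x L^2_t(0,T_0)} \leq \|W\|_{L^2}\|u\|_{L^2_x L^2_t([0,T_0]\times\T^1)} \leq \sqrt{T_0}\,\|W\|_{L^2}\|u_0\|_{L^2}.
\]
For $T_0 \leq 1$ this yields $\|u\|_{L^\infty_x L^2_t(0,T_0)} \leq C(1+\|W\|_{L^2})\|u_0\|_{L^2}$. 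For general $T$, partition $[0,T]$ into at most $T+1$ intervals of length $\leq 1$, apply the short-time bound on each (with initial data of the same $L^2$-norm $\|u_0\|_{L^2}$, by unitarity), and combine using $\sup_x \sum_j a_j(x) \leq \sum_j \sup_x a_j(x)$ to obtain the stated $C(1+\sqrt T)(1+\|W\|_{L^2})$ dependence.

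The main obstacle is the uniformity in $k$ of the free Schur bound---one must handle the resonant values (such as $k = 0, 1/2$) where $(n-m)(n+m+2k)$ acquires nontrivial integer zeros off the diagonal. The passage to rough potentials $W \in L^2$ (rather than $W \in L^\infty$) relies crucially on the strength of the $L^\infty_x L^2_t$ smoothing: the natural H\"older pairing $\|Wu\|_{L^1_x} \leq \|W\|_{L^2}\|u\|_{L^2_x}$ combined with the unitary bound $\|u\|_{L^2_{t,x}} \leq \sqrt{T_0}\|u_0\|_{L^2}$ matches exactly the $L^1_x L^2_t$ source space on the right of the dualized free estimate.
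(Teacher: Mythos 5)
Your free-case estimate (smooth cutoff in time, Plancherel, Schur test with a uniform-in-$k$ count of near-resonant pairs $m$) is fine --- in fact it is a more robust version of the paper's orthogonality computation on $[0,2\pi]$ --- and your final tiling of $[0,T]$ into unit intervals is also how the paper produces the $\sqrt T$ growth. The gap is in the perturbative step: you need the \emph{retarded} inhomogeneous bound
\begin{equation*}
\Bigl\| \int_0^t U_k(t-s)F(s)\,ds \Bigr\|_{L^\infty_x L^2_t(0,T_0)} \leq C\, \|F\|_{L^1_x L^2_t(0,T_0)},
\end{equation*}
and neither justification you offer delivers it. Christ--Kiselev is inapplicable for two reasons: the time exponents on the two sides are equal (both $L^2_t$), which is exactly the excluded endpoint $p=q$ where the constant of the lemma, of size $(1-2^{1/q-1/p})^{-1}$, degenerates; and the source space $L^1_x L^2_t$ has the $x$-norm outermost, so it is not of the form $L^p_t(B)$ to which the lemma (whose Whitney decomposition is organized in the time variable) applies. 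The fallback ``direct splitting into small subintervals'' does not close either: on a subinterval $I_j\ni t$ write $\int_0^t=\int_0^{t_j}+\int_{t_j}^t$; the first piece is $U_k(t-t_j)w_j$ with $\|w_j\|_{L^2}\le 2\|u_0\|$ and is handled by the homogeneous bound, but the second piece is again a retarded integral on $I_j$ whose source $Wu$ is only controlled in $L^1_xL^2_t(I_j)$ (not in $L^1_tL^2_x$, since $u(s)\notin L^\infty_x$ for $L^2$ data), so on $I_j$ you need precisely the estimate you are trying to prove; shortening the interval gains nothing because the non-retarded constant is $O(1)$, not $o(1)$, as $|I_j|\to 0$.

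Two repairs are available. The paper's route avoids the $L^1_xL^2_t$ source space altogether: apply Minkowski's integral inequality to the Duhamel term (the truncation $\bbbone_{s<t}$ is harmless inside the $L^2_t$ norm), use the homogeneous bound to get $C\int_0^{T_0}\|Wu(s)\|_{L^2_x}\,ds$, then Cauchy--Schwarz in $s$ and Fubini give $C\sqrt{T_0}\,\|W\|_{L^2}\|u\|_{L^\infty_xL^2_t(0,T_0)}$, which is absorbed into the left-hand side once $\sqrt{T_0}\|W\|_{L^2}$ is small; iterating over $\sim T\|W\|_{L^2}^2$ such intervals using conservation of the $L^2_x$ norm yields exactly the constant $C(1+\sqrt T)(1+\|W\|_{L^2})$. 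Alternatively, if you want to keep your non-absorptive scheme, you must actually prove the retarded estimate, e.g.\ by the Fourier-transform-in-time/resolvent method the paper uses in two dimensions: multiply by $e^{-t}\bbbone_{t>0}$, take the Fourier transform in $t$, and establish a bound $\|(-(\partial_x+ik)^2-\tau+i)^{-1}\|_{L^1(\T^1)\to L^\infty(\T^1)}\le C$ uniform in $\tau\in\R$ and $k\in[0,1)$; this is true, but it is an additional argument that your proposal does not contain.
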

\begin{proof} 
For $ W
\equiv 0 $ we put $ T = 2 \pi $ so that
, with $ c_n = \hat u_0 ( n ) $, we have
\begin{equation}
\label{eq:Linf}  \begin{split} 
 \| e^{ i t \partial_x^2}  u_0 \|_{L^\infty_x L^2_t }^2 & = \sup_x
  \int_{0}^{2 \pi} \left|\sum_{ n\in \ZZ } c_n  e^{ - i t |
      n  +  k|^2 + i n x } \right|^2 dt \\
& = \sup_x \sum_{ n,m \in \ZZ } \int_0^{2 \pi} 
e^{ i (|n + k |^2 - |m + k |^2 ) t } e^{ i ( n - m ) x }  c_n \bar c_m
dt
\\ & =  \sup_x \sum_{ n \in \ZZ }  \left| \sum_{ \stackrel{ m \in \ZZ }{ \pm
    ( m + k ) =  n + k } }
c_{m} e^{i m x } \right|^2 \leq 
4 \sum_{n \in \ZZ }  | c_n|^2 \leq C \| u_0\|_{L^2 ( \T^1 ) } . 
\end{split}
\end{equation}
(We note that $ \pm ( m + k ) = n + k $ has one solution 
only when $ k \neq 0 , \frac12 $ and two solutions
$ m = \pm n $ for $ k = 0 $ and $ m = n, -n-1$ for $ k = \frac 12 $.)
For a non-zero potential $ W \in L^2 ( \T^1 ) $ we use 
Duhamel's formula and write
\[ u ( t ) = e^{it \partial_x^2} u_0+ \frac 1 i \int_0^t
e^{i(t-s) \partial_x^2 }\left( W u ( s ))\right) ds .\]
Applying \eqref{eq:Linf} (now with a small  $  T> 0  $) and the
Minkowski inequality we obtain
\begin{equation}
\label{eq:Mink} \begin{split}
 \|u\|_{L^\infty_x L^2_t ( [ 0 , T ])  }& \leq C  \| u_0\|_{L^2_x} +\int_0^{T} \|
 \bbbone_{s<t}e^{i(t-s) \Delta } (W u (s))  \|_{L^\infty_x L^2_s ( [ 0 , T ] ) } ds \\
&  \leq  C  \| u_0\|_{L^2_x} + \int_0^{T} \|  e^{i(t-s) \Delta } ( W
 u ( s )  \|_{L^\infty_x L^2_s ( [ 0 , T ] )  }ds\\
&  \leq C  \| u_0\|_{L^2_x} + C \int_0^{T} \|  W u ( s ) \|_{L^ 2_x } ds 
\\ & \leq C  \| u _0 \|_{ L^2_x } + C \sqrt T \| W \|_{ L^2} \| u \|_{  L^\infty_x L^2_t ( [ 0 , T ] ) }.
\end{split}
\end{equation}
Hence 
\begin{equation}
\label{eq:WW}   \|u\|_{L^\infty_x L^2_t ( [ 0 , T ])  } \leq 2 C \| u \|_{
  L^2_x } , \ \ \text{if}  \ \ 
 \sqrt T \| W \|_{ L^2 } \leq \frac1{4 } . \end{equation}
To obtain the estimate for multiples of $ T $ satisfying \eqref{eq:WW} 
we note that, by the invariance of the $ L^2_x $ norm of $ u ( t ) $,
$  \int_{(k-1)T}^{kT} \|u ( t ) \|_{L^\infty_x }^2 dt  \leq 2C  \| u(
(k-1) t ) \|_{L^2_x} = 2 C \| u_0 \|_{ L^2_x} $.
Iterating this inequality gives \eqref{eq.est3}.
\end{proof}


\subsection{The case two dimensional tori.}
\label{cd2}
We now assume that $ A = 2\pi , B = 2\pi \gamma ^{-1}>0  $ in the definition of $ \T^2
$. The  case of general $ A , B $ follows by rescaling. 
For $n= (n_1, n_2) \in \Z^2$, we shall denote by 
\begin{equation}
\label{eq:dot}  |n| = \sqrt{ n_1^2 + \gamma n_2^2}, \ \ \ n \cdot x =
n_1 x_1 + \gamma n_2 x_2.
\end{equation}

We start with some general
observations. If $ V \in L^2 ( \T^2 ; \RR ) $ then $ - \Delta + V $
on $ \CI ( \T^2 ) $ is a symmetric operator. Also, by Sobolev
inequalities, 
\[  ( - \Delta + i )^{-1} :  L^2 ( \T^2) \rightarrow  H^2 ( \T^2 )
\hookrightarrow  C^{0,1-} ( \T^2 ) \hookrightarrow L^\infty ( \T^2 ) , \]
is a compact operator. Hence, 
as the multiplication by $ V \in L^2 $ is bounded $ L^\infty \to L^2 $, 
$ V ( - \Delta + i )^{-1} $ is a compact operator on $ L^2 $.
It follows that the operator $ - \Delta  + V $ is essentially self-adjoint and has
a discrete spectrum (see for instance \cite[Theorem 4.19]{DiSj}).
Since for for $ u \in  H^2 ( \T^2 ) \subset L^\infty ( \T^2) $, $ V u \in
L^2 $, the domain is equal to $ H^2 ( \T^2 ) $. In particular, 
\[ u ( t ) := e^{  i t (  \Delta - V ) } u_0 \in C^0 ( \RR_t ; H^2 (
\T^2 ) ) \cap C^1 ( \RR_t ; L^2 ( \T^2 ) ) , \]
and
\begin{equation}
\label{eq:Duh}  u ( t ) = e^{ it \Delta } u_0 + \frac{1}i \int_0^t
e^{ i ( t - s) \Delta } ( V  u ( s ) ) ds .
\end{equation}

\begin{prop}\label{lem.zyg}
Let $T>0$. For any compact subset $\mathcal{V} \subset L^2( \T^2)$,
there exists $C( \mathcal{V}),\epsilon >0$ such that for any 
\[ V\in\mathcal{V} + B(0, \epsilon)\subset  L^2( \T^2)\]
and any 
\[  v_0\in L^2( \T^2), \ \ \ f \in L^1((0, T); L^2( \T^2))+ L^{\frac 4
  3 } (\T^2 ; L^2(0, T)),\]
 the solution to
\begin{equation}\label{eq.schpot}
(i \partial_t + ( \Delta - V)) u =f, \qquad u\mid_{t=0} = v_0, 
\end{equation}
 satisfies 
\begin{multline}\label{eq.est2}
 \| u\|_{L^\infty([0,T]; L^2( \T^2)) \cap L^4(\T^2_x; L^2( 0,T))}\\
 \leq C( \mathcal{V}) \Bigl(\|v_0\|_{L^2( \T^2)} + \|f\|_{ L^1((0, T); L^2( \T^2))+ L^{\frac 4 3 } (\T^2 ; L^2(0, T))}\Bigr).
\end{multline} 
\end{prop}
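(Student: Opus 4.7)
The plan is to reduce to the free Schr\"odinger equation via Duhamel's formula, estimate the potential contribution by H\"older, and then use an equicontinuous mollification of the compact family $\mathcal V$ to absorb it uniformly. Write
\[ X(I) := L^\infty_t(I; L^2(\T^2)) \cap L^4(\T^2_x; L^2(I)), \qquad Y(I) := L^1_t(I; L^2(\T^2)) + L^{4/3}(\T^2_x; L^2(I)), \]
so that the target inequality reads $\|u\|_{X(0,T)} \leq C(\mathcal V)\bigl(\|v_0\|_{L^2} + \|f\|_{Y(0,T)}\bigr)$.

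I would first invoke the $L^4_x L^2_t$ Strichartz estimate for $e^{it\Delta}$ on $\T^2$ (the two-dimensional counterpart of the Zygmund inequality used in \propref{p:WW}), together with the standard $TT^*$/dual Strichartz mechanism, to obtain the free case of the proposition: for solutions of $(i\partial_t + \Delta)u = g$, $u|_{t=0} = v_0$,
\[ \|u\|_{X(0,\tau)} \leq C_0 \bigl( \|v_0\|_{L^2} + \|g\|_{Y(0,\tau)} \bigr), \]
with $C_0$ absolute for $\tau \leq 1$. By Duhamel, the solution of \eqref{eq.schpot} satisfies $u(t) = e^{it\Delta}v_0 - i\int_0^t e^{i(t-s)\Delta}(Vu + f)(s)\,ds$, so applying the free estimate with source $Vu+f$ leaves one to bound $\|Vu\|_{Y(0,\tau)}$. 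H\"older in space with $\tfrac{1}{4/3} = \tfrac12 + \tfrac14$ gives
\[ \|Vu\|_{L^{4/3}_x L^2_t(0,\tau)} \leq \|V\|_{L^2(\T^2)} \|u\|_{L^4_x L^2_t(0,\tau)} \leq \|V\|_{L^2}\|u\|_{X(0,\tau)}, \]
and for bounded pieces one has $\|Vu\|_{L^1_t L^2_x(0,\tau)} \leq \tau \|V\|_{L^\infty} \|u\|_{X(0,\tau)}$.

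To absorb the potential term uniformly over $V \in \mathcal V + B(0,\epsilon)$ I would exploit the $L^2$-compactness of $\mathcal V$. Equicontinuity of $\mathcal V$ under mollification by $\chi_\rho$ yields, for every $\delta > 0$, a scale $\rho = \rho(\delta, \mathcal V)$ with $\sup_{V \in \mathcal V}\|V - V * \chi_\rho\|_{L^2} < \delta$; taking $\epsilon \leq \delta$, the same bound (with $2\delta$) holds over $\mathcal V + B(0,\epsilon)$. Decompose $V = V_\flat + V_\sharp$ with $V_\flat = V * \chi_\rho$, so that $\|V_\flat\|_{L^\infty} \leq C_\rho M$ with $M := \sup_{V \in \mathcal V + B(0,\epsilon)}\|V\|_{L^2}$, and $\|V_\sharp\|_{L^2} \leq 2\delta$. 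The two previous bounds combine to
\[ \|u\|_{X(0,\tau)} \leq C_0 \bigl(\|v_0\|_{L^2} + \|f\|_{Y(0,\tau)}\bigr) + C_0 \bigl(2\delta + \tau C_\rho M\bigr)\|u\|_{X(0,\tau)}. \]
Pick $\delta$ small and then $\tau = \tau(\mathcal V, \epsilon)$ small so that $C_0(2\delta + \tau C_\rho M) \leq 1/2$; absorbing produces the estimate on $[0,\tau]$. Iterating on the $\lceil T/\tau \rceil$ subintervals $[k\tau, (k+1)\tau]$, using the endpoint of each as initial data for the next, delivers \eqref{eq.est2} with the constant $C(\mathcal V)$ also depending on $T$ and $\epsilon$.

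The main obstacle, and the genuine content of the proposition, is decoupling the constant from the individual size $\|V\|_{L^2}$; the H\"older bound $\|Vu\|_Y \leq \|V\|_{L^2}\|u\|_X$ cannot by itself be absorbed for $V$ of arbitrary $L^2$ norm. The mollification-plus-equicontinuity splitting is the one step that truly uses compactness of $\mathcal V$; with that splitting in hand, the rest is standard Strichartz perturbation theory, parallel to the one-dimensional argument of \propref{p:WW}.
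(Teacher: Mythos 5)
Your perturbative skeleton (Duhamel on short intervals, H\"older giving $\|V_\sharp u\|_{L^{4/3}_xL^2_t}\leq \|V_\sharp\|_{L^2}\|u\|_{L^4_xL^2_t}$, a mollification splitting of $V$ uniform over the compact set, absorption and iteration) is coherent, and your splitting $V=V_\flat+V_\sharp$ is the same use of compactness as the paper's Lemma~\ref{lem.decomp}. The genuine gap is that the two inputs you invoke as ``standard'' are precisely the analytic content of the paper's proof, and neither is standard on a general torus. First, the homogeneous bound $\|e^{it\Delta}v_0\|_{L^4(\T^2_x;L^2(0,T))}\leq C\|v_0\|_{L^2}$ is classical only for the square (rational) torus, where time periodicity and exact orthogonality in $t$ reduce it to Zygmund's inequality \eqref{eq:zyg0}. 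Here $|n|^2=n_1^2+\gamma n_2^2$ with $\gamma$ possibly irrational: the time frequencies are not integers, there is no exact orthogonality over a period, and arbitrarily small nonzero differences $|n|^2-|m|^2$ must be controlled. That is exactly what the ``fuzzy'' C\'ordoba/Zygmund estimate of Proposition~\ref{prop.zygmund} (lattice-point counting in thin annular sectors) is for; quoting ``the two-dimensional counterpart of the Zygmund inequality'' assumes the hardest step, which is moreover the special case $\mathcal V=\{0\}$, $f$ arbitrary, of the very proposition you are proving. Second, your Duhamel term requires the \emph{retarded} inhomogeneous estimate $\|\int_0^t e^{i(t-s)\Delta}g(s)\,ds\|_{L^4_xL^2_t}\lesssim \|g\|_{L^{4/3}_xL^2_t}$. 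The unretarded operator is indeed $TT^*$, but the retarded one has the same time exponent ($L^2_t$) on both sides, so the Christ--Kiselev mechanism does not apply; the sharp cutoff $\bbbone_{s<t}$ is handled in the paper by damping and Fourier transform in time, which turns the question into the uniform resolvent bound $\|(-\Delta+V-\tau)^{-1}\|_{L^{4/3}\to L^4}$, $|\Im\tau|\geq 1$, of Proposition~\ref{theorem.resol}, and this is how \eqref{eq.dispersion} is actually obtained.

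Granting those two inputs, your route does differ from the paper's: the paper perturbs in $V$ at the level of the stationary resolvent (using the $V_\delta$ splitting at fixed spectral parameter) and then gets the space--time estimate in one stroke by Fourier transform in time, with no time slicing, no smallness of the interval, and no iteration; you move the perturbation to the evolution side, which is workable once the free homogeneous and retarded estimates are available (at the price of a constant like $(2C_0)^{T/\tau}$, harmless since $T$ is fixed). Note also a small a priori issue in your absorption step: for a merely $L^2$ potential you need to know beforehand that $\|u\|_{L^4_xL^2_t}<\infty$ before it can be absorbed, which requires an approximation or bootstrap argument; the resolvent route sidesteps this as well.
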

Before proving this result, let us show how it implies that 
Jaffard's result (Theorem \ref{th.2} with $ V = 0 $) is stable by perturbation with potentials small in $L^2( \T^2)$:
\begin{cor}\label{cor.perturb}
For any non-empty open sent $ \Omega $ and $ T > 0 $, there exist constants $\kappa, K>0 $ such that for
 $V\in L^2(\T^2)$, 
\[ \| V\|_{L^2( \T^2)} \leq \kappa \ \Longrightarrow \ 
\| u_0 \|^2_{ L^2 ( \T^2) } \leq K  \int_0^T \|
e^{ - i t ( - \Delta + V ) } u_0  \|^2_{ L^2 ( \Omega ) } dt \,, \]
for any $ u_0 \in L^2 ( \T^2 ) $.  
\end{cor}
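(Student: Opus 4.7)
The plan is to perturb off Jaffard's free observability ($V\equiv 0$) via the Duhamel formula \eqref{eq:Duh}, using the Strichartz-type bound of Proposition~\ref{lem.zyg} to quantify the smallness of the difference between $e^{it\Delta}u_0$ and the full propagator $u(t) := e^{-it(-\Delta+V)}u_0$. From \eqref{eq:Duh} one has
\[
e^{it\Delta}u_0 \;=\; u(t) + iR(t), \qquad R(t) := \int_0^t e^{i(t-s)\Delta}\bigl(Vu(s)\bigr)\,ds,
\]
so that
\[
\int_0^T \|e^{it\Delta}u_0\|_{L^2(\Omega)}^2\,dt \;\leq\; 2\int_0^T \|u(t)\|_{L^2(\Omega)}^2\,dt \;+\; 2\|R\|_{L^2([0,T]\times\Omega)}^2.
\]
Combined with Jaffard's inequality $\|u_0\|_{L^2}^2 \leq K_0 \int_0^T\|e^{it\Delta}u_0\|_{L^2(\Omega)}^2\,dt$, this reduces the corollary to an estimate of the form $\|R\|_{L^2([0,T]\times\Omega)} \leq C_\Omega\|V\|_{L^2}\|u_0\|_{L^2}$, since the error term will then be absorbed by taking $\|V\|_{L^2}\leq\kappa$ small enough.

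The core step is therefore the bound on $R$. Since $\Omega$ has finite measure, H\"older in the spatial variable gives $\|R\|_{L^2([0,T]\times\Omega)} \leq |\Omega|^{1/4}\|R\|_{L^4(\T^2_x;L^2(0,T))}$. I then apply Proposition~\ref{lem.zyg} with $V=0$, vanishing initial data, and forcing $iVu$ to obtain $\|R\|_{L^4_xL^2_t} \leq C\|Vu\|_{L^{4/3}_xL^2_t}$. H\"older in $x$ (using $\tfrac34 = \tfrac12+\tfrac14$) yields $\|Vu\|_{L^{4/3}_xL^2_t} \leq \|V\|_{L^2}\|u\|_{L^4_xL^2_t}$, and a second application of Proposition~\ref{lem.zyg}, this time to $u$ itself with the compact set $\mathcal V=\{0\}$ and $\epsilon$ chosen larger than the target threshold $\kappa$, gives $\|u\|_{L^4_xL^2_t}\leq C\|u_0\|_{L^2}$. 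Chaining these three estimates delivers the required bound on $R$.

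Putting everything together,
\[
\|u_0\|_{L^2}^2 \;\leq\; 2K_0\int_0^T \|u(t)\|_{L^2(\Omega)}^2\,dt \;+\; 2K_0 C_\Omega^2 \|V\|_{L^2}^2 \|u_0\|_{L^2}^2,
\]
and choosing $\kappa$ so small that $2K_0C_\Omega^2\kappa^2 \leq \tfrac12$ absorbs the last term, giving the stated inequality with, for instance, $K = 4K_0$.

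The only real obstacle is the Strichartz bound on $R$: it requires the inhomogeneous estimate in the dual space $L^{4/3}_xL^2_t$, which is precisely what Proposition~\ref{lem.zyg} was set up to provide. Everything else is a routine Duhamel-perturbation-plus-absorption argument, and it is essential that the constant in Proposition~\ref{lem.zyg} be uniform over a small $L^2$-ball of potentials --- otherwise the second application to $u$ itself would not produce a constant depending only on $\Omega$, $T$, and (trivially) $\kappa$.
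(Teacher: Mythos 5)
Your argument is correct and follows essentially the same route as the paper: Duhamel off the free flow, Jaffard's $V=0$ observability, two applications of Proposition~\ref{lem.zyg} (inhomogeneous estimate for the remainder, then $\|u\|_{L^4_xL^2_t}\leq C\|u_0\|_{L^2}$), H\"older to pull out $\|V\|_{L^2}$, and absorption for $\kappa$ small. The only differences are cosmetic --- you control the remainder through its $L^4_x L^2_t$ norm plus H\"older over $\Omega$ where the paper uses the $L^\infty_tL^2_x$ component with a factor $T$, and your use of $\mathcal V=\{0\}$ with the ball $B(0,\epsilon)$ (requiring $\kappa\leq\epsilon$) actually makes the uniformity of the constant in $V$ slightly more explicit than the paper's choice $\mathcal V=\{V\}$.
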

\begin{proof} The Duhamel formula gives
$$ u = e^{- it (- \Delta + V) } u_0 = e^{it\Delta}u_0  + \frac 1 i
\int_0^t e^{i(t-s) \Delta} \left( Vu(s) \right) ds, 
$$ 
and Jaffard's result (estimate~\eqref{eq:t2} for $V=0$) applies to the
first term. Hence, for a constant $ K_0 $ depending  on $ \Omega$ and
$ T $, 
\begin{equation}\label{eq.perturb}
 \begin{split}
 \|u_0\|_{L^2 ( \T^2 ) } & \leq K_0 \int_0^T \| e^{it \Delta} u_0  \|^2_{ L^2 ( \Omega ) } dt  
\\ & =  K_0 \int_0^T \|
\textstyle{ e^{it( \Delta - V )} u_0  - \frac 1 i  \int_0^t e^{i(t-s)
    \Delta} ( Vu(s)) ds } \|^2_{ L^2 ( \Omega ) } dt \\
&\leq 2 K_0 \|
e^{it( \Delta - V )} u_0 \|^2_{ L^2 ( \Omega ) } dt + 2K_0 T
\|\textstyle{ \int_0^t e^{i(t-s) \Delta} ( Vu(s))
  ds}\|_{L^\infty([0,T]; L^2( \T^2))} ^2 .
\end{split}
\end{equation}
We now use Proposition \ref{lem.zyg} with $ {\mathcal V} = \{ V \} $,  $ v_0 = 0 $ and $ f = V u $
to obtain
\[ \|\textstyle{ \int_0^t e^{i(t-s) \Delta} ( Vu(s))
  ds}\|_{L^\infty([0,T]; L^2( \T^2))} \leq C \|V u \|_{L^{\frac 4 3
  } (\T^2 ; L^2(0, T))}^2 \leq C \| V \|_{L^2 ( \T^2 )} \| u \|_{ L^4
  ( \T^2 , L^2 ( 0 , T ) )}  .\]
Applying Proposition \ref{lem.zyg} to the righthand side, now with $ v_0 =
u_0 $, $ f = 0 $, gives 
\[  \|\textstyle{ \int_0^t e^{i(t-s) \Delta} ( Vu(s))
  ds}\|_{L^\infty([0,T]; L^2( \T^2))} \leq C \| V \|_{L^2 ( \T^2 ) }
\|u _0 \|_{L^2} ,\]
so that \eqref{eq.perturb} becomes
\[ \| u\|_{L^2 ( \T^2)} \leq 2 K_0 \|
e^{it( \Delta - V )} u_0 \|^2_{ L^2 ( \Omega ) } dt + 2 C K_0 T
\|V\|_{L^2( \T^2)}^2 \| u_0 \|_{L^2(\T^2))}^2 .
\]
To conclude, it sufficies to take $2 C K_0 T \kappa^2  \leq
1/2$.  (We note that since $K_0$ depends on $\Omega$ and $ T$ while $C$ depends
on $T$, we have no other choice than taking $\kappa >0$  small.)
\end{proof}

\medskip
\noindent
{\bf Remark.} In \S \ref{frsm} we will eliminate the smallness
assumption on $ \| V \|_{L^2} $ and that will prove Theorem
\ref{th.2}.

\medskip

The proof of Proposition \ref{lem.zyg} proceeds in several steps. 
We start proving estimate for $V=0$, then we prove the general case by
a perturbation arguments.

The next proposition is a ``fuzzy'' version of the classical estimate
of Zygmund:
\begin{equation}
\label{eq:zyg0} \exists \, C>0 \; \forall \, \tau \in \N, \ \ \| \sum_{{n\in
  \Z^2},\, { |n|^2= \tau}} c_ne^{in \cdot x} \|_{L^4( \T^2)}^2 \leq C
\sum_{{n\in  \Z^2},\, { |n|^2= \tau}} |c_n|^2 ,
\end{equation}
and it is motivated by the C\'ordoba square function estimate \cite{Co}:

\begin{prop}\label{prop.zygmund}
There exists $C>0$ such that for any $  0 \leq \kappa $ and $ 0 < h < 1 $, 
and any $ u
\in L^2( \TT^2)$ satisfying
\[  \hat u ( n ) = 0  \ \text{ for } \ n \notin {\mathcal B} ( \kappa,
h ) := \{ n\in \ZZ^2; | h^2 |n|^2 - 1 | \leq
 \kappa^2 h^2 \} . \]
we have 
\begin{equation}\label{zygmund}
\| u \| _{L^4( \TT^2)} \leq C  ( 1 + \kappa) ^{\frac12}  \| u\|_{L^2( \TT^2)}
\end{equation}
\end{prop}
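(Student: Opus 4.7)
The plan is to follow the strategy behind the classical Zygmund $L^4$ estimate and reduce the inequality to a lattice-point counting problem via Plancherel's theorem. We start by writing
\[
\|u\|_{L^4(\T^2)}^4 \;=\; \bigl\| |u|^2 \bigr\|_{L^2(\T^2)}^2 \;=\; \sum_{k\in \Z^2}\, \bigl|\widehat{|u|^2}(k)\bigr|^2,
\]
where $\widehat{|u|^2}(k) = \sum_{n,m\in \mathcal{B}(\kappa,h),\ n-m=k}\hat u(n)\,\overline{\hat u(m)}$. The $k=0$ term contributes exactly $\|u\|_{L^2}^4$. For $k\neq 0$, Cauchy--Schwarz followed by summation in $k$ bounds the remainder by $\bigl(\sup_{k\neq 0} r(k)\bigr)\, \|u\|_{L^2}^4$, where
\[
r(k) \;:=\; \#\bigl\{(n,m)\in \mathcal{B}(\kappa,h)^2\ :\ n-m = k\bigr\}.
\]
Thus the proposition reduces to the uniform lattice-point bound $\sup_{k\neq 0} r(k) \lesssim (1+\kappa)^2$.

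For this bound, note that if $n=m+k$ with $n,m\in \mathcal{B}(\kappa,h)$, then $|n|^2-|m|^2 \;=\; 2\,m\cdot k + |k|^2$, while both $|n|^2, |m|^2$ lie within $\kappa^2$ of $h^{-2}$. Hence $m$ simultaneously lies in the annulus $\mathcal{B}(\kappa,h)$ and in the strip
\[
\bigl\{ m \in\RR^2\;:\; \bigl|m\cdot k + \tfrac12|k|^2\bigr|\leq \kappa^2 \bigr\},
\]
of width $2\kappa^2/|k|$, perpendicular to $k$. After the linear change of variables $w=(m_1, \sqrt\gamma\, m_2)$ that replaces the quadratic form by the standard Euclidean norm and $\Z^2$ by the rectangular lattice $\Z\times\sqrt\gamma\,\Z$, this becomes a problem of counting rectangular-lattice points in the intersection of a standard annulus of radius $\sim h^{-1}$ and radial thickness $\sim \kappa^2 h$ with a thin strip.

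The main obstacle is this geometric lattice-point count, and I would handle it by case analysis. Setting $R_k := \sqrt{h^{-2}-|k|^2/4}$, the generic case has the strip meeting the annulus transversely in two arcs, each contained in a rectangle of dimensions $O(\kappa^2/|k|)\times O(\kappa^2/R_k)$ (degenerating to $O(\kappa^2/|k|)\times O(\kappa)$ when $R_k\lesssim\kappa$). The standard ``area plus perimeter'' estimate for lattice points in a convex region yields
\[
r(k) \;\lesssim\; 1 \;+\; \frac{\kappa^2}{|k|} \;+\; \frac{\kappa^2}{R_k} \;+\; \frac{\kappa^4}{|k|\,R_k},
\]
and each summand is $\lesssim (1+\kappa)^2$ in the regime $\kappa^2 h\lesssim 1$, using $|k|\geq 1$ and $|k|R_k\gtrsim h^{-1}$ away from the extremal subcase $|k|\sim 2h^{-1}$ (where the intersection sits inside a disk of radius $O(\kappa)$ and the count is directly $O((1+\kappa)^2)$). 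In the complementary regime $\kappa^2 h\gtrsim 1$, the trivial bound $r(k)\leq \#\mathcal{B}(\kappa,h)$ combined with the Gauss-circle-problem estimate $\#\mathcal{B}(\kappa,h)\lesssim \kappa^2 + h^{-1}$ gives $r(k)\lesssim \kappa^2$, since $h^{-1}\lesssim\kappa^2$ in this regime. Combining these cases yields the required uniform bound on $r(k)$ and completes the proof.
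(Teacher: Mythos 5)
You take a genuinely different route from the paper: instead of splitting the annulus into angular sectors of size $h\kappa$, proving the almost-orthogonality Lemma~\ref{geom}, and estimating each sector by $L^\infty$--$L^2$ interpolation plus a lattice count in a $(1+\kappa)\times(1+3\kappa^2h)$ box, you reduce \eqref{zygmund} by Plancherel and Cauchy--Schwarz on the fibres $\{n-m=k\}$ to the representation bound $\sup_{k\neq 0} r(k)\lesssim (1+\kappa)^2$, where $r(k)=\#\{(n,m)\in\mathcal B(\kappa,h)^2 : n-m=k\}$. That reduction is correct (it is the natural extension of the classical proof of Zygmund's estimate, where $r(k)\leq 2$ for $\kappa=0$), it dispenses with the quadrant reduction and the geometric lemma altogether, and your treatment of the regime $\kappa^2h\gtrsim 1$ via the trivial count $\#\mathcal B(\kappa,h)\lesssim \kappa^2+h^{-1}\lesssim\kappa^2$ is fine, as is the passage to the rectangular lattice $\ZZ\times\sqrt\gamma\,\ZZ$.

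There is, however, a genuine gap in the lattice count in the main regime $\kappa^2h\lesssim 1$. Your claim that each component of the intersection of the annulus with the strip $\{|m\cdot k+\tfrac12|k|^2|\leq\kappa^2\}$ lies in a rectangle of dimensions $O(\kappa^2/|k|)\times O(\kappa^2/R_k)$ fails when $1\lesssim|k|\lesssim\kappa$ (a nonempty regime as soon as $\kappa\gtrsim1$, and exactly the critical one when $\kappa\sim h^{-1/2}$): there the strip is a nearly diametral slab of width $2\kappa^2/|k|\gg|k|$, the coordinate $x$ along $k$ satisfies only $x^2=O(\kappa^4/|k|^2)$ rather than varying by $O(\kappa^2)$, so the transverse extent of each component is of size $\kappa^4/(|k|^2R_k)$, not $\kappa^2/R_k$. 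For $|k|\sim1$ and $\kappa^2\sim h^{-1}$ each component is essentially an arc of length $\sim h^{-1}$ whose bounding box has both sides $\sim h^{-1}$, and the box count then yields only $O(\kappa^4)$, which is useless. Your displayed inequality for $r(k)$ is in fact true, but proving it requires using that the region is a thin \emph{curved} tube rather than passing to a bounding box: each component lies in an $O(1+\kappa^2h)=O(1)$ neighbourhood of a circular arc of length $O\bigl(\kappa^2h^{-1}/(|k|R_k)\bigr)$, and the number of lattice points within distance $O(1)$ of a curve of length $L$ is $O(1+L)$; since $|k|/2+R_k\geq h^{-1}$ this gives $r(k)\lesssim 1+\kappa^2/|k|+\kappa^2/R_k$ away from the extremal case $R_k\lesssim 1+\kappa$ (i.e.\ $|k|$ close to $2h^{-1}$), which you already handle separately. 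With the bounding-box step replaced by this arc-length count, your argument closes and is arguably more elementary than the paper's; as written, though, that step would fail.
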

We note that \eqref{eq:zyg0} is the case of $ \kappa = 0 $.

\begin{proof}
We first note that we can assume that $ \kappa \geq 1 $ as the
sets $ {\mathcal B} ( \kappa, h ) $ increase with increasing $ \kappa $.

For a constant $ \delta > 0 $, to be fixed later, 
we distinguish two regimes: $ \kappa h  \geq \delta $ and $ \kappa h \leq
\delta $. 
In the first regime, the estimate follows from the Sobolev embedding
$H^{\frac12} ( \TT^2) \rightarrow L^4(\TT^2)$:  $ \hat u ( n ) = 0 $ unless
$ |n|^2 \leq h^{-2}  + \kappa^2 \leq (1/\delta +1 ) \kappa ^2
$, and this implies 
$$ \| u\| _{H^{\frac12} ( \TT^2)} \leq C_\delta \kappa^{\frac 1 2}\| u\|_{L^2}$$

From now on we assume that $ h \kappa \leq \delta $.  In this regime, we can change the set $\mathcal{B} (\kappa, h) $ to
$${\mathcal A} ( \kappa,
h ) := \{ n\in \ZZ^2; | h |n| - 1 | \leq
 \kappa^2 h^2 \} . 
 $$The idea is to
prove  an {\em arithmetic  version of the C\'ordoba square function
  estimate} \cite{Co}. Indeed, the usual version allows only to work with $\kappa \geq h^{-\frac12}$ 
(the uncertainty principle). Our version below allows to get estimates all the way down to $\kappa \sim 1$ 
(that is,  much beyond the uncertainty principle). 
We first notice that we can also assume that the spectrum of $u$ is also contained in the upper quadrant of the plane 
$ \{ z \in \CC : \Re z \geq 0 , \Im z \geq 0 \}$ (here and in what
follows we identify $ \RR^2 $ with $ \CC $).
Indeed, if the result is true for the upper quadrant, by symmetry, it is true
for any quadrant, and, with a different constant in the general case.
Then we decompose the intersection of the annulus with this quadrant
into a disjoint union of angular sectors of angles 
$ h  \kappa  $:
$$  \mathcal{A}( \kappa , h )  \cap \{ \Im z \geq 0 , \ \Re z \geq 0 \}  = \bigcup_{\alpha =0}
^{N_{\kappa , h }}  \mathcal{A}_{\alpha} ( \kappa, h ) ,  \ \ \  N_{ \kappa, h } :=
\left[   
\frac{ \pi} { 2 h  \kappa  } 
\right]
,  $$
where 
$$ \mathcal{A}_{\alpha}( \kappa, h ) := \{ z : \Re z \geq 0, \ \Im z \geq 0, \  \ 
| h |z| - 1 | \leq  \kappa^2 h^2 , \ \ \arg(z) \in [\alpha h
 \kappa  , (\alpha+1) h  \kappa   )  \}
$$ 
The proof relies on the following geometric lemma which will be proved
Appendix~\ref{app.B}:
\begin{lem}\label{geom}
Fix $\delta>0$ small enough. Then there exists $Q\in \NN$ such that
for any $ 0 < h < 1 $, any $1 \leq \kappa \leq \delta/h $, we have 
\begin{equation}
\begin{gathered}
\forall \alpha, \beta, \alpha ', \beta ' \in \{ 0, 1, \dots,
N_{\kappa, h }\} ^4,\\
 (\mathcal{A}_{\alpha} ( \kappa, h ) +\mathcal{A}_{\beta}( \kappa, h
 ) )\cap (\mathcal{A}_{\alpha'} ( \kappa, h
 )  +\mathcal{A}_{\beta '} ( \kappa, h
 )  ) \neq \emptyset\\
\Longrightarrow | \alpha - \alpha'| + | \beta- \beta ' |\leq Q \
\text{ or } \ | \alpha - \beta'| + | \beta- \alpha ' | \leq Q 
\end{gathered}
\end{equation}
\end{lem}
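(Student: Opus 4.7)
The plan is to extract the angular pairing directly from an exact identity $n_1+n_2=n_3+n_4$ with $n_1\in\mathcal{A}_{\alpha}$, $n_2\in\mathcal{A}_{\beta}$, $n_3\in\mathcal{A}_{\alpha'}$, $n_4\in\mathcal{A}_{\beta'}$, by a polar-coordinate calculation that exploits the first-quadrant restriction built into $\mathcal{A}_\alpha(\kappa,h)$. Write $n_j=r_je^{i\phi_j}$ with $r_j=1/h+O(\kappa^2 h)$ and $\phi_j\in[\alpha_jh\kappa,(\alpha_j+1)h\kappa)\subset[0,\pi/2]$, where $(\alpha_1,\alpha_2,\alpha_3,\alpha_4):=(\alpha,\beta,\alpha',\beta')$. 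Setting $s_{jk}=\tfrac12(\phi_j+\phi_k)$ and $d_{jk}=\tfrac12(\phi_j-\phi_k)\in[-\pi/4,\pi/4]$, a direct expansion gives
\[
n_j+n_k=e^{is_{jk}}\bigl[(r_j+r_k)\cos d_{jk}+i(r_j-r_k)\sin d_{jk}\bigr].
\]

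Since $r_j-r_k=O(\kappa^2 h)$ while $r_j+r_k=2/h+O(\kappa^2 h)$, the argument of $n_j+n_k$ equals $s_{jk}$ up to $O(\kappa^2 h^2)$ and its modulus squared equals $(4/h^2)\cos^2 d_{jk}+O(\kappa^2)$. Equating arguments and moduli for the two sums, and using $\cos d_{12}+\cos d_{34}\geq\sqrt{2}$ to pass from $\cos^2$ to $\cos$, one obtains
\[
|\phi_1+\phi_2-\phi_3-\phi_4|=O(\kappa^2 h^2),\qquad \cos(\phi_1-\phi_2)-\cos(\phi_3-\phi_4)=O(\kappa^2 h^2).
\]
The identity $\cos a-\cos b=-2\sin\tfrac{a+b}{2}\sin\tfrac{a-b}{2}$ combined with $|\sin x|\geq 2|x|/\pi$ on $[-\pi/2,\pi/2]$ then yields
\[
|(\phi_1-\phi_2)+(\phi_3-\phi_4)|\cdot|(\phi_1-\phi_2)-(\phi_3-\phi_4)|=O(\kappa^2 h^2),
\]
and since both factors are bounded by an absolute constant, the smaller of the two is $O(\kappa h)$.

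In the subcase $|(\phi_1-\phi_2)-(\phi_3-\phi_4)|=O(\kappa h)$, adding and subtracting with $|\phi_1+\phi_2-\phi_3-\phi_4|=O(\kappa h)$ gives $|\phi_1-\phi_3|=O(\kappa h)$ and $|\phi_2-\phi_4|=O(\kappa h)$; since each sector has angular width $h\kappa$ this forces $|\alpha-\alpha'|+|\beta-\beta'|\leq Q$ for some $Q$ depending only on $\delta$. The symmetric subcase yields $|\alpha-\beta'|+|\beta-\alpha'|\leq Q$.

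The main obstacle is quantitative sharpness: from a product of two bounded quantities being $O(\kappa^2 h^2)$ one can only conclude that the smaller is $O(\kappa h)$, which matches the sector width exactly with no margin to spare. What makes the argument go through is the first-quadrant restriction, which confines $d_{jk}$ to $[-\pi/4,\pi/4]$ so that $\cos d_{jk}\geq 1/\sqrt{2}$ stays bounded below; this simultaneously makes the argument/modulus decomposition well-conditioned and converts the $\cos^2$-comparison into a clean linear one. Geometrically, it keeps the common midpoint $\tfrac12(n_1+n_2)$ away from the origin, where the map $(n_j,n_k)\mapsto n_j+n_k$ degenerates. The assumption $\kappa h\leq\delta$ with $\delta$ small plays only the auxiliary role of keeping the $O(\kappa^2 h^2)$ radial perturbation subdominant to the $O(\kappa h)$ target accuracy.
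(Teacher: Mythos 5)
Your argument is correct and is essentially the paper's own proof in Appendix~B: both extract from the coincidence of the two sums the angular-sum constraint $|\phi_1+\phi_2-\phi_3-\phi_4|=O(\kappa^2h^2)$ and the constraint $\cos(\phi_1-\phi_2)-\cos(\phi_3-\phi_4)=O(\kappa^2h^2)$, then use the product factorization with $|\sin x|\geq 2|x|/\pi$ and the square-root (smaller-factor) step, the first-quadrant restriction entering through $\cos$ of the half-differences being $\geq 1/\sqrt2$. The only differences are cosmetic: you keep the exact polar form of $n_j+n_k$ instead of first projecting the points onto the circle $|z|=1/h$ with an $O(\kappa^2h)$ error, and you obtain the two alternatives of the lemma by a case analysis rather than the paper's WLOG ordering $\theta_1\geq\theta_2$, $\theta_3\geq\theta_4$ (the reduction of the weighted norm $\gamma\neq1$ to the Euclidean case by the map $(x_1,x_2)\mapsto(x_1,\sqrt{\gamma}x_2)$ is the same one-line remark as in the paper).
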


We apply the lemma as folllows.
We have  
$$ u= \sum_{\alpha =0}^{N_{\kappa, h}} U_{\alpha}, \ \ \ 
 u^2 = \sum_{\alpha, \beta  =0}^{N_{\kappa, h} }U_{\alpha}
 U_{\beta}, \ \ \ 
U_\alpha : = \sum_{\ZZ^2 \cap \mathcal{A}_{\alpha}} u_{n} e^{in\cdot x}.$$
and hence
\begin{equation}\label{L4}
 \| u\|_{L^4(\TT^2)} ^4 = \sum_{ \alpha, \beta, \alpha ', \beta '   =0}^{N_{\kappa, h}}\int_{\TT^2} U_{\alpha} U_\beta \overline{U} _{\alpha '} \overline{U} _{\beta'}(x) dx
\end{equation}
The integral vanishes unless 
$$(\mathcal{A}_{\alpha} ( \kappa, h ) +\mathcal{A}_{\beta}( \kappa, h
)  )\cap (\mathcal{A}_{\alpha'} ( \kappa, h ) +\mathcal{A}_{\beta '}(
\kappa, h )  ) \neq \emptyset $$
as otherwise 
$$n\in \ZZ^2 \cap \mathcal{A}_{\alpha}, \ \ m \in \ZZ^2 \cap
\mathcal{A}_{\beta}, \ \ p\in \ZZ^2 \cap \mathcal{A}_{\alpha' }, \ \ q\in
\ZZ^2 \cap \mathcal{A}_{\beta'} \  \Longrightarrow \  n+m- (p+q) \neq 0
,$$ 
and, using the inner product \eqref{eq:dot},   $\int_{\TT^2} e^{ix\cdot (n+m- p-q)} dx =  0$.
Lemma~\ref{geom} then shows that we can restrict the sum
in~\eqref{L4}  to the subset of indexes $(\alpha, \beta, \alpha',
\beta ')$ satisfying 
\[ | \alpha - \alpha'| + | \beta- \beta ' |\leq Q \ \text{ or } \ | \alpha
- \beta'| + | \beta- \alpha ' | \leq Q . \]
This and an application of H\"older's inequality, 
\[ \begin{split} \Bigl|\int_{\TT^2} U_{\alpha} U_\beta \overline{U}
  _{\alpha '} \overline{U} _{\beta'}(x) dx\bigr| & \leq \|
  U_{\alpha}\|_{L^4( \TT^2)}\| U_{\beta}\|_{L^4( \TT^2)}\|
  U_{\alpha'}\|_{L^4( \TT^2)}\| U_{\beta'}\|_{L^4( \TT^2)} \\
&  \leq \left\{ \begin{array}{l}  \left( \|  U_{\alpha}\|_{L^4( \TT^2)}^2 +  \| U_{\alpha'}\|_{L^4(
  \TT^2)} ^2 \right) \left(  \| U_{\beta}\|_{L^4( \TT^2)}^2 + \|
U_{\beta'}\|_{L^4( \TT^2)}^2 \right) \\ 
\ \\
\left( \|   U_{\alpha}\|_{L^4( \TT^2)}^2 +  \| U_{\beta'}\|_{L^4(
  \TT^2)} ^2 \right) \left(  \| U_{\beta}\|_{L^4( \TT^2)}^2 + \|
U_{\alpha'}\|_{L^4( \TT^2)}^2 \right) \end{array} \right. 
\end{split} ,\]
give
\begin{equation}\label{eq.10}  \| u\|_{L^4( \TT^2)} ^4 \leq C Q^2 \Bigl(\sum_{\alpha =0 } ^{N_{\kappa, h}}\| U_{\alpha}\|_{L^4( \TT^2)}^2 \Bigr) ^2.
\end{equation}
To estimate the norms of $ U_\alpha $ we write 
\begin{equation}\label{eq.1}
  \begin{split}
\| U_{\alpha}\|_{L^4  ( \TT^2)}&\leq C \| U_{\alpha}\|^{1/2}_{L^\infty (
  \TT^2)} \| U_{\alpha}\|^{1/2}_{L^2(
  \TT^2)} \\
   &\leq \bigl(\sum_{n\in \ZZ^2 \cap \mathcal{A}_\alpha} |u_n| \bigr)^{1/2} \bigl(\sum_{n\in \ZZ^2 \cap \mathcal{A}_\alpha} |u_n|^2\bigr)^{1/4}
  \leq C  | \ZZ^2 \cap \mathcal{A}_{\alpha} ( \kappa , h ) |^{\frac14} \| U_{\alpha}\|_{L^2( \TT^2)} .
\end{split}
  \end{equation}
  To estimate the number of integral points  in $\mathcal{A}_{\alpha} (
  \kappa, h ) $, we
  first notice that 
 $\mathcal{A}_{\alpha} ( \kappa , h ) $ is included in a rectangle of height $ 1  +
 {\kappa} $ and width $ 1 + 3 {\kappa}^2 h $.

\begin{figure}[ht]
\includegraphics[width=4.5cm]{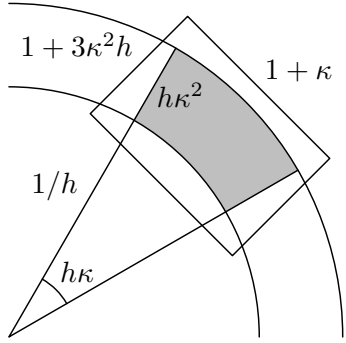}
\caption{The angular region $ \mathcal A_\alpha ( \kappa , h ) $
fitted inside a rectangle.}
\label{f:1}
\end{figure}

Now, the number of
 integral   points in any rectangle of height
 $H$ and width $W$ is bounded by $C \max(H,1)
 \max(W,1)$. (To see this, notice that open discs of radius $\frac12$
 centered at the integer points are pairwise disjoint and are all included in
 a rectangle of height $H+1$ and width $W+1$.)  Hence, recalling that 
$ \kappa h \leq \delta $, 
\[  | \ZZ^2 \cap \mathcal{A}_{\alpha} ( \kappa , h ) |  \leq C ( 1 +
\kappa ) ( 1 + 3 \kappa^2 h )  \leq C ( 1 + \kappa)^2 . \]
Combining this with \eqref{eq.1} and \eqref{eq.10} gives
  $$ \| u\|_{L^4 ( \T^2 ) } ^4 \leq C ( 1 + \kappa )^2 \| u\|_{L^2 (
    \T^2) }^4, $$ 
concluding the proof. 
\end{proof} 

The next step in the proof of Proposition \ref{lem.zyg} is an optimal (at least in terms of the spectral region where it holds)
resolvent estimate -- see Kenig-Dos Santos-Salo~\cite[Remark
1.2]{KDSS} and Bourgain-Shao-Sogge-Yao~\cite{BSSY}
for related results. 
\begin{prop} \label{theorem.resol}
For any compact subset  $\mathcal{V}\subset L^2( \T^2)$, there exists $C(\mathcal{V}), \epsilon >0$ such that for any  $V\in \mathcal{V}+ B(0, \epsilon)$, any $ f \in C^\infty( \T^2)$ and any $ \tau \in \C, |\Im \tau| \geq 1$,  
 \begin{equation}
\label{eq.resolbis}
 \| ( - \Delta+V - \tau)^{-1} f \|_{L^4 ( \T^2)} \leq C  \| f \|_{L^{4/3}( \T^2)}
 \end{equation}
 \end{prop}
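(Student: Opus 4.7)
The plan is to first prove the estimate for $V=0$ using Proposition~\ref{prop.zygmund}, then to extend to general $V\in\mathcal{V}$ by a compact perturbation / Fredholm argument, with uniformity over $(V,\tau)$ handled by splitting into $|\Im\tau|\geq M$ (Neumann series) and $|\Im\tau|\in[1,M]$ (compactness). In the free case, by $L^{4/3}$--$L^4$ duality, bounding $R_0(\tau) := (-\Delta-\tau)^{-1}$ reduces to estimating
\[
\Bigl|\sum_{n\in\Z^2}\frac{\hat f(n)\,\overline{\hat g(n)}}{|n|^2-\tau}\Bigr| \leq C\|f\|_{L^{4/3}}\|g\|_{L^{4/3}}.
\]
Setting $E=\Re\tau$, I would decompose $\Z^2$ into a core $\{||n|^2-E|\leq 1\}$ together with dyadic shells $\mathcal{A}_k=\{||n|^2-E|\in[2^k,2^{k+1})\}$, $k\geq 0$. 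On $\mathcal{A}_k$ the multiplier has modulus $\leq\min(2^{-k},|\Im\tau|^{-1})$, while Proposition~\ref{prop.zygmund} with $h^{-2}=E$ and $\kappa^2\sim 2^k$ gives the Fourier projector $Q_k$ onto $\mathcal{A}_k$ an $L^2\to L^4$ bound of order $2^{k/4}$, and by duality an $L^{4/3}\to L^2$ bound of the same size. Cauchy--Schwarz on each shell then contributes $\min(2^{-k},|\Im\tau|^{-1})\cdot 2^{k/2}\|f\|_{L^{4/3}}\|g\|_{L^{4/3}}$, which is geometric in $k$ and sums (in fact giving $\|R_0(\tau)\|_{L^{4/3}\to L^4}\lesssim|\Im\tau|^{-1/2}$).

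For general $V\in L^2$, factor
\[
(-\Delta+V-\tau)^{-1} = (I+R_0(\tau)M_V)^{-1}R_0(\tau),
\]
where $M_V$ is multiplication by $V$. H\"older gives $\|M_V\|_{L^4\to L^{4/3}}\leq\|V\|_{L^2}$, so $R_0(\tau)M_V$ is bounded on $L^4$. It is also compact: approximating $V$ by $V^{(n)}\in C^\infty$ in $L^2$, each $R_0(\tau)M_{V^{(n)}}$ factors through the Rellich-compact embedding $W^{2,4/3}(\T^2)\hookrightarrow L^4$, and $R_0(\tau)M_V$ is the operator-norm limit. Injectivity of $I+R_0(\tau)M_V$ on $L^4$ is immediate: a kernel element $u\in L^4\subset L^2$ satisfies $(-\Delta+V-\tau)u=0$ distributionally, bootstraps to $u\in H^2$ using $V\in L^2$ and $H^2(\T^2)\hookrightarrow L^\infty$, and must vanish because $|\Im\tau|\geq 1$ lies outside the real spectrum of the self-adjoint $-\Delta+V$. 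The Fredholm alternative therefore furnishes $(I+R_0(\tau)M_V)^{-1}$ pointwise in $(V,\tau)$.

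Uniformity in $(V,\tau)\in(\mathcal{V}+B(0,\eps))\times\{|\Im\tau|\geq 1\}$ follows from two ingredients. For $|\Im\tau|\geq M$ large, the free-case decay makes $\|R_0(\tau)M_V\|_{L^4\to L^4}<1/2$ uniformly in $V$ bounded in $L^2$, so the Neumann series $\sum_k(-R_0(\tau)M_V)^k$ gives a uniform bound. On the remaining set $\{1\leq|\Im\tau|\leq M\}$, uniform compactness of the family $\{R_0(\tau)M_V\}$ together with a normal-families contradiction argument (a putative failure sequence $(\tau_n,V_n,u_n)$ with $\|u_n\|_{L^4}=1$ and $\|(I+R_0(\tau_n)M_{V_n})u_n\|_{L^4}\to 0$ would have a weakly convergent subsequence producing a nontrivial kernel element, contradicting injectivity) yields uniformity in $\Re\tau\in\RR$ and $V$. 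The main technical obstacle is the free-case step, where one has to calibrate the dyadic decomposition so that Zygmund's loss $2^{k/4}$ per shell is overcome by the multiplier's decay $2^{-k}$, while navigating the transition between the semiclassical ($\kappa h\leq\delta$) and elliptic ($\kappa h\geq\delta$) regimes of Proposition~\ref{prop.zygmund}. A secondary issue is the $\tau$-uniformity as $|\Re\tau|\to\infty$ with bounded $|\Im\tau|$, where the putative limit lies outside the parameter space and one must close the compactness argument carefully.
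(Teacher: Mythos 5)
Your free-case argument is essentially the paper's: the paper factors $(-\Delta-\tau)^{-1}=(-\Delta-\tau)^{-1/2}\bigl((-\Delta-\bar\tau)^{-1/2}\bigr)^*$ and estimates $(-\Delta-\tau)^{-1/2}:L^2\to L^4$ by decomposing dyadically in $\bigl||n|^2-\Re\tau\bigr|$ and applying Proposition~\ref{prop.zygmund} with $\kappa=2^{j/2}$, which is the same computation as your bilinear shell-by-shell Cauchy--Schwarz (and your $|\Im\tau|^{-1/2}$ decay is correct; you only need to add the elementary remark that for $\Re\tau\lesssim 1$, where Proposition~\ref{prop.zygmund} does not apply, the multiplier is controlled by Sobolev embedding, as the paper does with a direct energy argument for $\Re\tau\leq C$).

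The perturbation step, however, has a genuine gap. Your two mechanisms for uniformity do not cover the crucial regime $\Re\tau\to+\infty$ with $|\Im\tau|$ bounded (say $|\Im\tau|=1$): the Neumann series gives nothing there, since $\|R_0(\tau)M_V\|_{L^4\to L^4}\lesssim|\Im\tau|^{-1/2}\|V\|_{L^2}$ is merely bounded, not small; and the normal-families contradiction argument cannot close, because as $\Re\tau_n\to+\infty$ the operators $R_0(\tau_n)M_{V_n}$ are not a collectively compact or convergent family (their non-elliptic region moves to frequencies $|n|^2\approx\Re\tau_n$), a putative failure sequence $u_n$ can concentrate on exactly those Fourier modes, so $u_n\rightharpoonup 0$ weakly in $L^4$ and no nontrivial kernel element is produced --- nothing contradicts injectivity, and there is no limiting operator in any case. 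This regime is the heart of the proposition and needs a quantitative argument, which is what the paper supplies: from $(-\Delta-\tau)u=f-Vu$ and your free estimate, $\|u\|_{L^4}\leq C\|Vu\|_{L^{4/3}}+C\|f\|_{L^{4/3}}$; then split $V=V_\delta+(V-V_\delta)$ by Lemma~\ref{lem.decomp} with $\|V-V_\delta\|_{L^2}\leq\delta$ and $V_\delta\in L^\infty$, absorb the resulting $C\delta\|u\|_{L^4}$ into the left side, and control $\|V_\delta u\|_{L^{4/3}}\leq C\|V_\delta\|_{L^\infty}\|u\|_{L^2}$ through the a priori bound $|\Im\tau|\,\|u\|_{L^2}^2\leq\|u\|_{L^4}\|f\|_{L^{4/3}}$, obtained by pairing the equation with $\bar u$ and taking imaginary parts; Young's inequality then yields \eqref{eq.resolbis} with a constant uniform in $\Re\tau$ and in $V\in\mathcal{V}+B(0,\epsilon)$. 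Note that this absorption also handles the non-compactness of $\mathcal{V}+B(0,\epsilon)$, which your compactness argument would otherwise have to confront separately.
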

We deduce it from Proposition \ref{prop.zygmund} and the 
following elementary result:
 \begin{lem}\label{lem.decomp} Assume that $\mathcal{V}$ is a compact subset of $ L^2( \T^2)$. Then for any $\delta >0$ there exists $C_\delta>0$ and for any $V\in \mathcal{V}$ there exists $V_ \delta \in L^\infty ( \T^2)$ such that 
 $$ \| V_\delta - V\|_{L^2( \T^2)} \leq \delta, \qquad \|V_\delta \|_{L^\infty ( \T^2)} \leq C_\delta.$$
 \end{lem}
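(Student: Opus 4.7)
The plan is to exploit the total boundedness of $\mathcal{V}$: since $\mathcal{V} \subset L^2(\T^2)$ is compact, for any $\delta>0$ it admits a finite $(\delta/2)$-net $\{V_1,\dots,V_N\}$. For each fixed $V_j \in L^2(\T^2)$, the truncations $V_j^{(M)} := V_j \,\bbbone_{\{|V_j| \leq M\}}$ converge to $V_j$ in $L^2$ as $M\to\infty$ by dominated convergence (the dominating function is $|V_j|^2 \in L^1$). Since there are only finitely many $V_j$, one can choose a single $M = M(\delta)$ large enough that
\[
\|V_j - V_j^{(M)}\|_{L^2(\T^2)} \leq \delta/2, \qquad j=1,\dots,N.
\]

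Given any $V \in \mathcal{V}$, pick an index $j$ with $\|V-V_j\|_{L^2} \leq \delta/2$ and set $V_\delta := V_j^{(M)}$. Then manifestly $\|V_\delta\|_{L^\infty(\T^2)} \leq M =: C_\delta$, and
\[
\|V - V_\delta\|_{L^2(\T^2)} \leq \|V-V_j\|_{L^2} + \|V_j - V_j^{(M)}\|_{L^2} \leq \delta/2 + \delta/2 = \delta,
\]
which gives the claim. Note that the map $V \mapsto V_\delta$ is not required to be continuous or canonical; the lemma merely asserts existence for each $V$, so choosing a nearest point in the finite net is legitimate.

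The argument is essentially routine, with no serious obstacle. The only substantive input is the compactness of $\mathcal{V}$, used to reduce the uniform approximation problem to finitely many individual approximations, each of which is handled by ordinary truncation and dominated convergence. Equivalently, one could phrase this as a consequence of the equi-integrability of compact subsets of $L^2$ (a precompact family in $L^2$ of a finite measure space satisfies $\sup_{V \in \mathcal{V}} \int_{\{|V|>M\}} |V|^2 \to 0$ as $M\to \infty$), and then simply take $V_\delta = V\bbbone_{\{|V|\leq M\}}$; but the finite-net formulation above seems the most direct.
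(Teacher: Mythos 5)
Your proof is correct and follows essentially the same route as the paper: approximate each element of a finite net (the paper invokes density of $L^\infty$ in $L^2$ abstractly, you use explicit truncation) and then use compactness via a covering/net argument to make the bound uniform. No issues.
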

\begin{proof} This is obvious for $ {\mathcal V } = \{ V_0 \} $ since $
  L^\infty \subset L^2 $ is dense. Applying it with $ \delta $
  replaced by $ \delta/2 $ the statement remains true for $ V $ with $
  \| V - V_0 \|_{L^2} \leq \delta/2 $. A covering arguments provides
  the result for a general compact set in $ L^2 $. 
\end{proof}

 \begin{proof}[Proof of Proposition~\ref{theorem.resol}]
 For $\Re \tau \leq C$  for any fixed $ C $, we get~\eqref{eq.resolbis} directly. Indeed, from 
$( - \Delta -\tau +V) u =f$, multiplying by $\overline{u}$, integrating by parts and taking real and imaginary parts, we get 
\begin{equation*}
\begin{aligned}
 \| \nabla u \|_{L^2( \T^2)}^2 - \Re \tau \| u\|_{L^2 ( \T^2)}^2 &\leq    \| V |u|^2\|_{L^1( \T^2)}+  \| u\|_{L^4( \T^2)} \| f\|_{L^{4/3} ( \T^2)}, \\
   |\Im \tau| \| u\|_{L^2 ( \T^2)}^2 &\leq   \| u\|_{L^4( \T^2)} \|
   f\|_{L^{4/3} ( \T^2)} .
   \end{aligned}
   \end{equation*}
Since  $|\Im \tau| \geq 1$, the Sobolev embedding and Lemma
\ref{lem.decomp} imply
\[ \begin{split}
\| u \| _{L^4( \T^2)} ^2 & \leq C \| u \|_{H^1( \T^2)}^2 \\
& \leq C \bigl( \| V_\delta - V \| _{L^2( \T^2)} \| u\|^2_{L^4( \T^2)}  + \| V_\delta\|_{L^\infty( \T^2)} \| u \|_{L^2( \T^2)} ^2+ \| u\|_{L^4( \T^2)} \| f\|_{L^{4/3} ( \T^2)}\bigr),\\
& \leq C ( \delta + \epsilon ) \|  u\|^2_{L^4( \T^2)} +C \bigl(\| V_\delta\|_{L^\infty( \T^2)}+1) \| u\|_{L^4( \T^2)} \| f\|_{L^{4/3} ( \T^2)}\bigr) 
\end{split} \]
and  choosing $  \epsilon < \delta =  \frac14 C$ gives the result. 

For  $\Re \tau > C $ we start with the case of $ V = 0 $ and notice
\[  ( - \Delta - \tau )^{-1} = ( - \Delta - \tau)^{-\frac12} \left( ( -
  \Delta - \bar \tau )^{-\frac12} \right)^* : L^{\frac 43} \longrightarrow L^4  \]
follows from $ ( - \Delta - \tau)^{-\frac12} : L^2 \to L^4 = (
  L^{\frac43})^* $. Here the square root is defined using the spectral
  theorem and the branches chosen  for  $ \pm \Im \tau > 1 $  so that 
$$   (  \lambda -   \tau)^{\frac12} \overline{ ( \lambda - \bar \tau
    )^{\frac12}}  = \lambda   - \tau , \ \  \lambda \geq 0 . $$

Hence we need to prove that
$$  \|u \|_{L^4 ( \T^2)} \leq C \| f \|_{L^{2}( \T^2)}, \ \ \ u :=  (
- \Delta - \tau)^{-\frac12} f .
$$
To use Proposition~\ref{prop.zygmund} we write the resolvent applied
to $ f $ using the Fourier series:
\[  u = \sum_{n} \frac{ f_n} { (|n|^2 - \tau)^{\frac12}} e^{in \cdot x} =
u_0 + \sum_{j=1}^\infty  u_j , \ \ \ u_j :=  \sum_{ 2^{j-1} \leq |
  |n|^2 - {\Re \tau}  | <  2^j}\frac{ f_n} {
  (|n|^2 - \tau)^{\frac12}} e^{in \cdot x} . 
\] 
We note that $ u_0 = \sum_{ ||n|^2 - \Re \tau | < 1 } f_n  ( |n|^2 -
\tau)^{-\frac12} e^{ in\cdot x } $ and hence Proposition
\ref{prop.zygmund} gives 
\[ \| u_0\|_{ L^4 ( \T^2 ) } \leq C \| f
\|_{L^2 ( \T^2 ) } .\]
Applying \eqref{zygmund} to $ u_j$'s, with 
$ h = (\Re \tau)^{-\frac12}$ and $ \kappa = 2^{j/2} $ 
gives
\[ \begin{split} 
  \| u -u_0 \|_{L^4( \T^2)} & \leq C \sum_{j} 2^{j/4}
   \| u_j \|_{L^2} 
 \leq \bigl(\sum_{j=1}^\infty 2^{-j/2}\bigr)^{\frac12}\Bigl(\sum_{j=1}^\infty 2^{j} 
\sum_{2^{j-1} \leq | |n|^2 -  \Re \tau |< 2^j} \frac{ |f_n|^2} { ||n|^2
  - \tau|}\Bigr)^{\frac12} \\ 
& \leq C \| f\|_{L^2}
\end{split} \]
which concludes the proof of Proposition~\ref{theorem.resol} for
$V=0$. 

The general case $V\neq 0$ follows from the same perturbation argument
as in the case $\Re \tau \leq C $. Indeed, from 
$ (- \Delta - \tau )u = -V u +f,$ 
we deduce 
$$  |\Im \tau| \| u\|_{L^2 ( \T^2)}^2 \leq   \| u\|_{L^4( \T^2)} \| f\|_{L^{4/3} ( \T^2)},$$
and from the resolvent estimate for $V=0$,
\[ \begin{split} 
  \| u \| _{L^4( \T^2)} &  \leq C \| V u\| _{L^{4/3}( \T^2)}+ \| f\|_{L^{4/3}( \T^2)} \\
& \leq C \bigl( \| V_\delta - V \| _{L^2( \T^2)} \| u\|_{L^4( \T^2)}  + \| V_\delta\|_{L^\infty( \T^2)} \| u \|_{L^2( \T^2)} +  \| f\|_{L^{4/3} ( \T^2)}\bigr),\\
& \leq C \delta \|  u\|_{L^4( \T^2)} +C \bigl(\| V_\delta\|_{L^\infty( \T^2)} \| u\|^{\frac12}_{L^4( \T^2)} \| f\|^{\frac12}_{L^{4/3} ( \T^2)} + \| f\|_{L^{4/3} ( \T^2)}\bigr) .
\end{split} \] 
Choosing $ \delta $ small enough gives the desired estimate.
\end{proof}

\begin{proof}[Proof of Proposition \ref{lem.zyg}] Let us first study the contribution of $v_0$. 
Putting $Tu_0 = e^{it( \Delta - V)} u_0$ we have 
$$ TT^* f= \int_0 ^T e^{i(t-s) ( \Delta -V)} f(s) ds= \int _{0}^te^{i(t-s) ( \Delta -V)} f(s) ds+  \int _{t}^Te^{i(t-s) ( \Delta -V)} f(s) ds.
$$
To prove that $ T: L^2 ( \T^2 ) \to L^4 ( \T^2_x , L^2 ( [0, T]) )$
it suffices to prove that 
$$ T T^* : L^{\frac43} ( \T^2_x , L^2 ([0,T]) ) \to L^4 ( \T^2_x , L^2
( [0, T]) ), $$
 and we will show it for
the two operators on the right hand side, say the first one.  That means showing that for
solutions to $ ( i \partial_t + \Delta - V ) v = f $, $ v|_{t=0} = 0
$, we have 
\begin{equation} \label{eq.dispersion}
\| v\|_{L^4( \T^2; L^2[0,T])} \leq C \| f \|_{L^{4/3}( \T^2; L^2[0,T])}.
 \end{equation}
Let $U = v e^{-t}\bbbone_{t>0}, F= fe^{-t} \bbbone_{0<t<T}$. We have 
$ (i\partial_t + \Delta - V +i ) U = F$ and hence by taking the
Fourier transform in  $ t $, 
$$ ( \Delta -V +i - \tau) \widehat{U} = \widehat{F}.$$
Proposition~\ref{theorem.resol} now shows that for any $\tau \in \R$, 
 $$ \| \widehat{U} (\tau)\|_{L^4( \T^2)} \leq C \| \widehat{F} (\tau)\|_{L^{4/3}( \T^2)},
 $$ which implies 
 \begin{equation}
 \begin{aligned}
 \| u\|_{L^4(\T^2_x; L^2( 0,T)) } &\leq C \|U\|_{L^4(\T^2_x; L^2(\R_t)) }= C \|\widehat{U}\|_{L^4(\T^2_x; L^2(\R_\tau)) }\\
 &\leq C \|\widehat{U}\|_{L^2(\R_\tau;L^4(\T^2_x )) }
 \leq C'\|\widehat{F}\|_{L^2(\R_\tau;L^{4/3}(\T^2_x )) }\\
 &\leq C' \|\widehat{F}\|_{L^{4/3}(\T^2_x ; L^2(\R_\tau)) }
 = C' \|F\|_{L^{4/3}(\T^2_x ; L^2[0,T]) }
 \end{aligned}
 \end{equation}
concluding the proof of \eqref{eq.dispersion}.

Part of nonhomogeneous estimate in \eqref{eq.est2}, 
$$\| v\|_{L^\infty([0,T]; L^2( \T^2)) \cap L^4(\T^2_x; L^2( [0,T])}\leq C  \|f\|_{ L^1([0,T]; L^2( \T^2))}.
 $$ 
follows from the boundedness of the operator $T$ from $L^2$ to
$L^4(\T^2; L^2([0,T])$ and the Minkovski inequality.  Finally, since the dual of the operator 
 $ f \mapsto \int_0 ^t e^{i (t-s) \Delta -V} f(s) ds$
 is $ g\mapsto \int_t ^T e^{i (t-s) \Delta -V} g(s) ds,$  we also get 
  $$\| u\|_{L^\infty([0,T]; L^2( \T^2)) }\leq C  \|f\|_{L^1([0,T];
    L^2( \T^2))+ L^{\frac 4 3 } (\T^2 ; L^2[0,T])}, 
 $$ 
which concludes the proof of Proposition~\ref{lem.zyg}.
\end{proof}

We conclude this section with a continuity result which will be useful later:
 \begin{prop}
\label{p:VV}
Consider  a sequence,  $ \{ V_n\}_{n\in \N} \subset 
L^2 (\T^2)  $ converging to $V\in L^2( \T^2)$. 
Then there exists $C>0$ such that for any $v_0\in L^2( \T^2)$, 
\begin{equation}
\label{eq.diff}\| e^{-it (-\Delta + V)}v_0  -e^{-it(- \Delta + V_n)} v_0\|_{L^\infty([0,T]; L^2( \T^2))}  \leq C \| V-V_n\|_{L^2( \T^2)} \| u_0 \|_{L^2( \T^2)} 
 \end{equation}
 \end{prop}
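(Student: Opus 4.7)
The plan is to write the difference as a Duhamel integral driven by the potential error $V - V_n$, and then estimate it using the uniform Strichartz-type bound from Proposition~\ref{lem.zyg} combined with H\"older's inequality in the space variable.

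Set $w(t) \defeq e^{-it(-\Delta + V)} v_0 - e^{-it(-\Delta + V_n)} v_0$. A direct computation shows that $w$ solves
\begin{equation*}
(i\partial_t + \Delta - V) w = (V - V_n)\, e^{-it(-\Delta + V_n)} v_0, \qquad w|_{t=0} = 0.
\end{equation*}
The right-hand side belongs to $L^{4/3}(\T^2; L^2(0,T))$ once we combine Proposition~\ref{lem.zyg} with a H\"older inequality in $x$ (using $\tfrac{1}{4/3} = \tfrac12 + \tfrac14$):
\begin{equation*}
\bigl\| (V - V_n)\, e^{-it(-\Delta + V_n)} v_0 \bigr\|_{L^{4/3}(\T^2; L^2(0,T))} \leq \| V - V_n \|_{L^2(\T^2)} \, \bigl\| e^{-it(-\Delta + V_n)} v_0 \bigr\|_{L^4(\T^2; L^2(0,T))}.
\end{equation*}

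The key point is then to apply Proposition~\ref{lem.zyg} twice with a \emph{single} compact set of potentials, so that the constant $C(\mathcal V)$ can be chosen independently of $n$. Since $V_n \to V$ in $L^2(\T^2)$, the set $\mathcal V \defeq \{V\} \cup \{V_n : n \in \NN\}$ is compact in $L^2(\T^2)$. Applying Proposition~\ref{lem.zyg} with $f = 0$ gives $\bigl\| e^{-it(-\Delta + V_n)} v_0 \bigr\|_{L^4(\T^2; L^2(0,T))} \leq C(\mathcal V) \|v_0\|_{L^2(\T^2)}$ uniformly in $n$, while applying it to the Duhamel equation for $w$ (with $v_0 = 0$ and the source above) yields
\begin{equation*}
\| w \|_{L^\infty([0,T]; L^2(\T^2))} \leq C(\mathcal V)\, \bigl\| (V - V_n)\, e^{-it(-\Delta + V_n)} v_0 \bigr\|_{L^{4/3}(\T^2; L^2(0,T))}.
\end{equation*}
Chaining the three inequalities gives the desired bound \eqref{eq.diff}.

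The main (minor) obstacle is to ensure the uniformity of the constant, which is precisely why the compact set $\mathcal V$ version of Proposition~\ref{lem.zyg} was proved; without it, one would only obtain a bound depending on $n$. Observe also that both applications of Proposition~\ref{lem.zyg} rest on the $L^4$ Strichartz-type estimate (the nontrivial ingredient being the Zygmund/C\'ordoba-type Proposition~\ref{prop.zygmund}), and that the H\"older step is what converts the $L^2$ smallness of $V - V_n$ into $L^\infty_t L^2_x$ smallness of the difference of the flows.
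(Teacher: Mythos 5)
Your proof is correct and follows essentially the same route as the paper: the same Duhamel/source equation for the difference, the same H\"older step converting $\|V-V_n\|_{L^2}$ smallness into an $L^{4/3}_x L^2_t$ bound on the source, and the same two applications of Proposition~\ref{lem.zyg} with the compact set $\{V\}\cup\{V_n : n\in\NN\}$ to get constants uniform in $n$. No gaps.
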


\medskip
\noindent
{\bf Remark.}
The result in Proposition~\ref{p:VV} can be stated more generally: 
for a compact subset of $\mathcal{V}\subset L^2( \T^2)$ and is equivalent to the Lipschitz continuity of the map
$$V\in \mathcal{V}\subset L^2( \T^2) \longmapsto e^{-it (-\Delta + V)}\in L^\infty((0,T); \mathcal{L}(L^2( \T^2)).$$
A slight modification of the proof presented here shows that it is in fact also Lipschitz on bounded subsets of $L^p$, $ p>2$.
It would be interesting to investigate such properties on other manifolds, as they seem to depend strongly on the geometry. Indeed, the analysis in~\cite[Theorem 2]{BuGeTz} is likely to give that 
on spheres,  there exists a sequence of potentials $\{V_n\}_{n\in
\NN} $ such the that for any $T>0$, any $p<+\infty$, 
$$\lim_{n\rightarrow + \infty } \| V_n\|_{L^p( \mathbb{S}^2)} =0, \quad \text{ but }  \lim_{n\rightarrow + \infty }\| e^{it \Delta} - e^{it (\Delta - V_n) }\|_{L^\infty((0,T);\mathcal{L}(L^2 ( \mathbb{S}^2)))} >0.
$$

\medskip

\begin{proof}[Proof of Proposition~\ref{p:VV}]
Let $u= e^{it (\Delta -V)}v_0 $ and $u_n= e^{it (  \Delta -V_n)}v_0$,
so that the Duhamel formula gives
$$ u- u_n  = 
\frac 1 i \int_0^t e^{i(t-s) ( \Delta -V)} (V_n- V) u_{n}(s) ds. 
$$
Proposition \ref{lem.zyg} applied
with ${\mathcal V} = \{ V\}$, $ v_0=0$ and $ f= (V_n- V) u_{n}$,
and H\"older's inequality give
\[  \begin{split}
  \| u_V - u_{n}\|_{L^\infty ([0,T] ; L^2(\T^2_x))}  & \leq
  C \| (V-V_n) u_{n} \|_{L^{4/3}(\T^2; L^2([ 0, T])}\\
&  \leq  C  \| (V-V_n)\|_{L^2}
\|u_{n}\|_{L^4( \T^2; L^2([0,T] ))} .\end{split}  \]
Applying Proposition~\ref{lem.zyg} again, now with
$ {\mathcal V} = \{ V_n, n\in \N\} \cup \{V\} $, 
and $ f=0$, we estimate the right hand side to 
obtain the desired estimate:
\[  \| u_V - u_{n}\|_{L^\infty ([0,T] ; L^2(\T^2_x))}
 \leq C \| V-V_n\|_{L^2 ( \T^2 )} \| v_{0} \|_{L^2(\T^2_x ) }.\]
\end{proof}

  \section{One -dimensional observability estimates}
\label{odo}

In this section we consider the one-dimensional analog of our result
which we prove for $L^p$ potentials, $p>1$. 
In applications to control and observability on $2$-tori we will use it only 
it for $p=2$ but the finer estimate may be of independent interest.

Let us make first  some general comments. The operator $ -\partial_x^2 + W $, $ W
\in L^1 ( \T^1 ) $ is defined by Friedrich's extension (see for
instance \cite[Theorem 4.10]{DiSj}) using the quadratic form 
\[   q ( v , v ) = \int_{\T^1  }  \left( | \partial_x v ( x )  |^2  +
  W ( x ) | v ( x ) |^2 \right) dx , \ \ v \in H^1 ( \T^1 ) , \]
which is bounded from below since
\[ \begin{split}  | \int_{ \T^1}  W ( x ) | v ( x)|^2  dx| & \leq C \| W \|_{ L^1 }
\| u \|_{L^\infty }^2 \leq C \| W \|_{ L^1 }
\| \partial_x v \|_{L^2 } \| v \|_{L^2 } \\
&  \leq - C \epsilon 
 \| W \|_{ L^1 }
\| \partial_x v \|_{L^2 }^2 - \frac C  \epsilon \| W \|_{L^1} \| v
\|_{L^2 }^2 . 
\end{split} 
\] 
Hence $ P= - \partial_x^2 + W $ defined on $ C^\infty ( \T^1 ) $ 
has a unique self-adjoint extension with the domain 
containg $ H^1 ( \T^1 ) $. When $ W \in L^2 ( \T^1 ) $ the
operator is self-adjoint with the domain $ H^2 ( \T^1 ) $.
The resolvent, $ ( - \partial_x^2 + W - z )^{-1} $ , $ z \notin \RR $
is compact and the spectrum is discrete with eigenvalues  $\lambda_j 
\to + \infty $.

We have the following one dimensional observability which 
holds for functions satisfying Floquet boundary conditions result:

\begin{prop}\label{prop.1d}
Assume that 
$ W \in L^p ( \T^1 ) $, $p>1$, and $ \omega \subset \T^1 $ 
is a non-empty open set; then for any $ T > 0 $ 
there exist $K_0>0$ such that for any $ k \in [0, 1 ) $ and $ v \in L^2 ( \T^1 ) $, 
\begin{equation}
\label{eq:ob1d-b}
 \| v \|_{L^2 ( \T^1 ) }^2 \leq
{ K_0}   \int_0^T \| e^{ i t (( \partial_x + i k ) ^2 - W) } v \|_{ L^2 ( \omega )}^2 
dt 
\end{equation}
\end{prop}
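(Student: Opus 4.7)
My plan is to reduce the observability for the operator $(\partial_x+ik)^2 - W$ to the free observability ($W=0$), which follows from the classical Haraux--Jaffard--Kahane theory, and to handle the perturbation using the $L^\infty_x L^2_t$ dispersive estimate of Proposition~\ref{p:WW}.

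For the free observability uniform in $k$, I would observe that a periodic solution $u$ of $(i\partial_t + (\partial_x+ik)^2)u=0$ factors as $u(t,x) = e^{-ik^2 t} w(t, x - 2kt)$, where $w$ solves the standard free Schr\"odinger equation $(i\partial_t + \partial_y^2)w = 0$ on $\T^1$; this is a direct computation once one expands $(n+k)^2 = n^2 + 2nk + k^2$. Consequently the observability for $u$ on $[0,T]\times\omega$ becomes observability for $w$ on the moving set $\{(t,y)\colon y\in\omega - 2kt\}$. Choosing a ball $B(x_0,\rho)\subset\omega$ and setting $I := [0, \min(T,\rho/4)]$, one checks that $B(x_0,\rho/2) \subset B(x_0-2kt,\rho) \subset \omega - 2kt$ for every $t\in I$ and every $k\in[0,1)$; applying Jaffard's theorem to $w$ on $I\times B(x_0,\rho/2)$ then gives the free observability with constant $C_0 = C_0(\omega,T)$ independent of $k$.

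To pass to general $W$, set $A := (\partial_x+ik)^2$ and $u(t) := e^{it(A-W)}v$. Duhamel's formula gives $u(t) = e^{itA}v + i\int_0^t e^{i(t-s)A}(Wu)(s)\,ds$, so the free observability applied to $e^{itA}v = u - \int_0^\cdot e^{i(\cdot-s)A}(Wu)(s)\,ds$ yields
\[
\|v\|_{L^2}^2 \;\leq\; 2 C_0 \int_0^T \|u\|_{L^2(\omega)}^2\,dt \;+\; 2 C_0 |\omega| \,\Bigl\|\int_0^t e^{i(t-s)A}(Wu)(s)\,ds\Bigr\|_{L^\infty_x L^2_t}^2,
\]
where I used the trivial Fubini bound $\|f\|_{L^2_t L^2(\omega)}^2 \leq |\omega|\,\|f\|_{L^\infty_x L^2_t}^2$. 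The dual form of Proposition~\ref{p:WW}, obtained by a standard $TT^*$ plus Christ--Kiselev argument, gives $\|\int_0^t e^{i(t-s)A}F\,ds\|_{L^\infty_x L^2_t} \leq C(1+\sqrt T)(1+\|W\|_{L^2})\|F\|_{L^1_x L^2_t}$, while H\"older in $x$ and $L^2$ conservation give $\|Wu\|_{L^1_x L^2_t} \leq \|W\|_{L^2}\|u\|_{L^2_{t,x}} = \sqrt T\,\|W\|_{L^2}\|v\|_{L^2}$. For $T = T_0$ small enough (depending on $\|W\|_{L^p}$), the Duhamel term is absorbed into the left-hand side, yielding observability on $[0,T_0]$.

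Finally, $L^2$ conservation $\|u(t_0)\|_{L^2} = \|v\|_{L^2}$ lets me apply the short-time estimate on each $[t_0, t_0+T_0]$, $t_0 \in [0, T-T_0]$, and integrate in $t_0$ via Fubini to obtain observability for any $T>T_0$; for $W\in L^p$ with $1<p<2$ an $L^p$-adapted variant of Proposition~\ref{p:WW} (taking the inhomogeneous source in $L^{p'}_x L^2_t$ via the standard $1$-dimensional Strichartz) replaces the bound above, without changing the scheme. The main obstacle is the uniformity in the Floquet parameter $k$: at $k=0$ and $k=\tfrac12$ the eigenvalues $\{(n+k)^2\}$ develop double multiplicities and minimal spectral gaps vanish, so a na\"ive Ingham argument fails; the gauge change to $w$ sidesteps this degeneracy precisely by transferring all of the $k$-dependence into a harmless time translation of $\omega$, to which the classical Haraux theory applies with $k$-independent constants.
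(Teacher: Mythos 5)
Your reduction of the $k$-dependence by the gauge change $u(t,x)=e^{-ik^2t}w(t,x-2kt)$ and the shrinking of $\omega$ to a fixed ball is fine, but the perturbative step that is supposed to handle $W$ has a genuine gap: the absorption cannot be closed for large potentials. After Duhamel your inequality reads, schematically, $\|v\|_{L^2}^2\le 2C_0(T_0)\int_0^{T_0}\|u\|^2_{L^2(\omega)}dt+2C_0(T_0)\,|\omega|\,\bigl(C\sqrt{T_0}\,\|W\|_{L^2}(1+\|W\|_{L^2})\bigr)^2\|v\|_{L^2}^2$, so you need $C_0(T_0)\,T_0\,|\omega|\,\|W\|_{L^2}^2$ to be small. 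But the free observability constant on a time interval of length $T_0$ obeys the trivial lower bound $C_0(T_0)\,T_0\,|\omega|\ge |\T^1|$ (test \eqref{eq:ob1d-b} with $W=0$ on the constant function $v\equiv 1$), and in fact blows up much faster as $T_0\to 0$; consequently no choice of $T_0$, however small, makes the error term absorbable — the scheme closes only when $\|W\|_{L^2}$ itself is small. This is precisely the limitation of the Duhamel argument that the paper acknowledges for Corollary~\ref{cor.perturb} (``we have no other choice than taking $\kappa>0$ small''), and the whole point of Proposition~\ref{prop.1d} is to remove that smallness. Iterating over subintervals $[t_0,t_0+T_0]$ does not help, since each short-time estimate already requires the smallness. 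There is also a secondary flaw: the ``dual form of Proposition~\ref{p:WW} via $TT^*$ plus Christ--Kiselev'' with data in $L^1_xL^2_t$ and output in $L^\infty_xL^2_t$ is not legitimate, because Christ--Kiselev requires the time exponent of the source to be strictly smaller than that of the target, and here both are $L^2_t$; that particular term can instead be bounded by Minkowski's inequality together with Proposition~\ref{p:WW} applied to the full flow $u$ (as in the paper's own proof of that proposition), but repairing it does not repair the main gap.

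By contrast, the paper's proof is non-perturbative in $W$: it first proves a stationary estimate (Proposition~\ref{pL:1}) for $(-(\partial_x+ik)^2+W-\tau)u=g$, where the potential term is absorbed not by a smallness assumption but by gaining negative powers of $\langle\tau\rangle$ — one interpolates the $H^{-1}$ and $L^2$ source estimates of Lemma~\ref{l:old}, places $Wu$ in $H^{-s}$ with $s=\frac1{2p}<\frac12$ via Sobolev and H\"older, and absorbs it for $\langle\tau\rangle$ large whatever the size of $\|W\|_{L^p}$; bounded $\tau$ is handled by unique continuation for $L^p$ potentials (Schechter--Simon). One then passes to the evolution by Fourier transform in time, which produces a compact remainder $\|u_0\|_{H_P^{-2}}$, removed by the uniqueness--compactness (Bardos--Lebeau--Rauch) argument or the quantitative version in the Appendix. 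If you want to salvage your approach you would have to replace the short-time absorption by some mechanism that gains smallness from high frequency rather than from short time — which is exactly what the stationary route accomplishes.
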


We first prove the stationary version following the elementary
approach of \cite{BZ3}:
\begin{prop}
\label{pL:1}
Under the assumptions of \eqref{eq:ob1d} there exists
$ C_1 = C_1 ( \omega, \| W \|_{L^p})  $ such that for any $ \tau \in
\RR $, any solution to 
$$  (- (\partial_x + i k )^2 + W -\tau) u = g,$$
\begin{equation}
\label{eq:s1d}
\| u \|_{ L^2 ( \T^1 ) } \leq C_1 \left( 
\langle \tau \rangle^{-\frac12}  \| g\|_{L^2}+ \| u \|_{ L^2 ( \omega ) } \right) .
\end{equation}
\end{prop}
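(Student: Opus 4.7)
I follow the approach of \cite{BZ3}: a contradiction argument with 1D unique continuation for bounded $\tau$, and a semiclassical perturbation for high $\tau$. Throughout, the gauge change $v(x):=e^{ikx}u(x)$ turns the equation into $(-\partial_x^2+W-\tau)v=e^{ikx}g$ with Floquet condition $v(x+2\pi)=e^{2\pi ik}v(x)$; all norms appearing in \eqref{eq:s1d} are invariant under this transformation, and the analysis below is uniform in $k\in[0,1)$.

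\emph{Bounded regime} $|\tau|\leq M$. If the estimate fails, extract a sequence $(v_n,g_n,\tau_n)$ with $\|v_n\|_{L^2}=1$, $\|g_n\|_{L^2}\to 0$, $\|v_n\|_{L^2(\omega)}\to 0$, $\tau_n\to\tau_\infty$. The quadratic-form lower bound recalled at the start of \S\ref{cd1} gives $\{v_n\}$ bounded in $H^1$, and the equation then yields boundedness in $W^{2,p}$; by the 1D Sobolev embedding $W^{2,p}\hookrightarrow C^1$, a subsequence converges in $C^1$ to a limit $v_\infty\in C^1$ with $\|v_\infty\|_{L^2}=1$, $v_\infty|_\omega=0$, satisfying $(-\partial_x^2+W)v_\infty=\tau_\infty v_\infty$ distributionally. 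Uniqueness of the Cauchy problem for a second-order linear ODE with $L^1_{\rm loc}$ coefficients, applied at any $x_0\in\omega$, forces $v_\infty\equiv 0$, a contradiction.

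\emph{High-energy regime} $\tau\geq M$. Set $h:=\tau^{-1/2}$ and rewrite the equation as $P_hv=f$, where $P_h:=-h^2\partial_x^2-1+h^2W$ and $\|f\|_{L^2}\leq Ch\cdot\langle\tau\rangle^{-1/2}\|g\|_{L^2}$. For the free operator $Q_h:=-h^2\partial_x^2-1$, a direct Fourier-series computation gives the observability-type bound
\[
\|v\|_{L^2(\T^1)}\leq C\bigl(h^{-1}\|Q_hv\|_{L^2}+\|v\|_{L^2(\omega)}\bigr),
\]
uniform in the Floquet parameter: modes away from the characteristic set $\{|n+k|\approx h^{-1}\}$ are controlled by the trivial resolvent bound for $Q_h$ (supplying the $h^{-1}$ factor), while the finitely many near-characteristic modes form a spectral cluster controlled on $\omega$ by Ingham's inequality. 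The potential term is treated perturbatively: for $v$ semiclassically frequency-localized near $|\xi|\sim h^{-1}$ (a consequence of the equation combined with ellipticity of $Q_h$ off the characteristic set), the Bernstein inequality $\|v\|_{L^\infty}\leq Ch^{-1/2}\|v\|_{L^2}$ yields, for $W\in L^p$ with $p>1$, the bound $\|h^2Wv\|_{L^2}=o(h)\|v\|_{L^2}$; for $p\geq 2$ this is a direct application of H\"older's inequality with Bernstein, and for $1<p<2$ one splits $W=W_{\leq N}+W_{>N}$ into an $L^\infty$ piece and an $L^p$-small remainder, optimising over $N=N(h)$. Absorbing this error and taking $M$ large gives \eqref{eq:s1d}.

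\emph{Main obstacle.} The technical heart is the perturbation estimate $\|h^2Wv\|_{L^2}=o(h)\|v\|_{L^2}$, which is where the hypothesis $p>1$ enters: the gain available is of order $h^{(p-1)/p}$, which is positive iff $p>1$ and degenerates at $p=1$. The $1<p<2$ case is subtle because $W$ need not even lie in $L^2$, so one cannot use Cauchy--Schwarz against $\|v\|_{L^\infty}$ directly; the truncation $W=W_{\leq N}+W_{>N}$ and the balance between $N$ and $h$ are what make the argument work at the endpoint.
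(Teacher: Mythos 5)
Your bounded-$\tau$ compactness argument and your high-energy argument in the range $p\geq 2$ are essentially sound (for $p\geq2$ the Agmon/Bernstein bound $\|v\|_{L^\infty}\lesssim \langle \tau\rangle^{1/4}\|v\|_{L^2}$, obtained from the a priori $H^1$ bound, plus H\"older against $\|W\|_{L^2}$ gives an error $O(\langle\tau\rangle^{-1/4})\|v\|_{L^2}$ that can be absorbed, which suffices for the paper's application with $p=2$). However, there is a genuine gap in the range $1<p<2$, which the proposition covers. Your perturbative step requires the error $h^{2}Wv$ to be measured in $L^2$, because the free observability estimate you propose only admits an $L^2$ forcing term through $h^{-1}\|Q_h v\|_{L^2}$. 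For $W\in L^p$ with $p<2$ and $v\in L^\infty$ the product $Wv$ lies only in $L^p$, not in $L^2$: there is no H\"older pairing of $L^p$, $p<2$, against any $L^r$, $r\leq\infty$, that lands in $L^2$. The truncation $W=W_{\leq N}+W_{>N}$ does not repair this: $W_{\leq N}v$ is fine, but $\|W_{>N}v\|_{L^2}$ can be infinite (already $\|W_{>N}\|_{L^2}=\infty$ is possible), so there is nothing to optimize over $N$ unless you first prove a free estimate tolerating rougher data. This is exactly the point of the paper's route: Lemma~\ref{l:old} gives the $W=0$ estimate with the right-hand side split as $f+g$, with $f$ measured in $H^{-1}$ and $g$ in $L^2$; interpolation yields an $H^{-s}$ version with loss $\langle\tau\rangle^{(s-1)/2}$, and then $\|Wu\|_{H^{-s}}$ is bounded, for $s=\tfrac1{2p}<\tfrac12$, via the dual Sobolev embedding $L^{2/(1+2s)}\hookrightarrow H^{-s}$, H\"older with $\|W\|_{L^p}$, and $\|u\|_{H^s}\lesssim\|u\|_{L^2}^{1-s}\|u\|_{H^1}^{s}$; the resulting exponents of $\langle\tau\rangle$ are negative precisely because $s<\tfrac12$, which is what permits absorption for large $\tau$. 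Without an analogous $H^{-s}$ (or $L^1$-type) variant of your Fourier-cluster estimate, your high-energy argument does not close for $1<p<2$, which is where the hypothesis $p>1$ is actually tested.

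Two secondary remarks. First, the near-characteristic cluster consists of boundedly many integer frequencies with gaps $\geq 1$, split into two groups near $\pm\sqrt\tau$; classical Ingham-type inequalities for gap $1$ require an observation interval of length greater than $2\pi$, so on a small $\omega$ you need a Tur\'an--Nazarov-type bound whose constant depends only on the number of exponentials and on $|\omega|$ (or the elementary multiplier argument of \cite{BZ3}, which is how the paper obtains Lemma~\ref{l:old}); the statement you need is true, but ``Ingham's inequality'' as such does not give it. Second, in the bounded-$\tau$ compactness step you should also extract a convergent subsequence of Floquet parameters $k_n\to k_\infty$ to get the claimed uniformity in $k\in[0,1)$; with that, your use of Cauchy uniqueness for the ODE with $L^1$ coefficients is a correct and more elementary substitute for the paper's appeal to the unique continuation results of Schechter--Simon.
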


This follows from the following result which holds for $ W = 0 $. 
\begin{lem}
\label{l:old}
Let $ \omega \subset T^1 $ be an open set.
Then there exists a constant $ C_0 = C_0 ( \omega ) $,
 such that that for $ u \in H^1 ( \T^1 ) $ satisfying
\begin{equation}
\label{eq:free}  ( - ( \partial_x + i k ) ^2  - \tau ) u = f + g , 
\end{equation}
we have 
\begin{equation}
\label{eq:lol} 
\begin{split}  \| u \|_{ L^2 ( \T^1 ) } + \langle \tau \rangle^{-\frac12}
\| \partial_x u \|_{L^2 ( \T^2 )}  & \leq 
C_0 \left( \| f \|_{H^{-1} ( \T^1 ) } +  \langle \tau
    \rangle^{-\frac12} \| g \|_{ L^2 ( \T^1 ) }  
+  \| u \|_{L^2 ( \omega ) }  \right) . \end{split}
\end{equation}
\end{lem}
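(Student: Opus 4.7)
The plan is to decompose $u$ in Fourier according to the distance to the characteristic variety $\{n\in\Z:(n+k)^2=\tau\}$, bounding the ``off--shell'' piece by an explicit resolvent estimate and reducing the ``on--shell'' piece to a uniform observability for a bounded-dimensional space of exponentials. Without loss of generality assume $\tau \geq 1$ (when $\tau < 1$ the operator $-(\partial_x + ik)^2 - \tau$ is uniformly elliptic and direct Fourier analysis gives the estimate without using $\|u\|_{L^2(\omega)}$). Writing $u = \sum_{n \in \Z} c_n e^{inx}$, the equation becomes $((n+k)^2 - \tau)c_n = \hat f(n) + \hat g(n)$. Fix a large constant $M$, to be chosen depending only on $\omega$, and split $u = u_L + u_H$ with
\[
u_L := \sum_{n \in \Lambda} c_n e^{inx}, \qquad \Lambda := \{n \in \Z : |(n+k)^2 - \tau| \leq M\sqrt{\tau}\},
\]
noting that $\Lambda$ lies in two intervals of length $O(M)$ centered at $\pm\sqrt{\tau}$, so $|\Lambda|$ is bounded independently of $k$ and $\tau$.

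For the off-shell part, I would partition the complement of $\Lambda$ into the three regions $(n+k)^2 \geq 2\tau$, $(n+k)^2 \leq \tau/2$, and the intermediate annulus $(n+k)^2 \asymp \tau$ with $|(n+k)^2 - \tau| \geq M\sqrt{\tau}$. A pointwise case check shows that in each region
\[
\frac{|\hat f(n) + \hat g(n)|^2}{|(n+k)^2 - \tau|^2} \leq C\bigl(\langle n\rangle^{-2}|\hat f(n)|^2 + \tau^{-1}|\hat g(n)|^2\bigr),
\]
with the intermediate case contributing an extra factor $M^{-1}$ (not needed here). Summing over $n$ gives the elliptic bound
\[
\|u_H\|_{L^2(\T^1)} + \tau^{-1/2}\|\partial_x u_H\|_{L^2(\T^1)} \leq C\bigl(\|f\|_{H^{-1}(\T^1)} + \tau^{-1/2}\|g\|_{L^2(\T^1)}\bigr).
\]

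The main obstacle is the uniform on-shell observability $\|u_L\|_{L^2(\T^1)} \leq C(\omega, M)\|u_L\|_{L^2(\omega)}$, independent of $k \in [0,1)$ and $\tau \geq 1$. Writing $u_L = e^{in_+ x}w_+(x) + e^{in_- x}w_-(x)$ with $n_\pm$ the integers nearest to $\pm\sqrt{\tau}$, the polynomials $w_\pm$ have Fourier support in a fixed interval $[-L_0, L_0]$ depending only on $M$, hence live in a fixed finite-dimensional space; the difficulty is that the observability constant must be uniform in the shifts $n_\pm$. I would argue by compactness and contradiction. Given a sequence with $\|u_L^{(j)}\|_{L^2(\T^1)} = 1$ and $\|u_L^{(j)}\|_{L^2(\omega)} \to 0$, extract subsequences so that $w_\pm^{(j)} \to w_\pm$ in $C^\infty$ and $\tau^{(j)} \to \tau_\infty \in [1, +\infty]$. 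If $\tau_\infty < \infty$, $u_L$ stays in a fixed finite-dimensional space of trigonometric polynomials and $\|u_L\|_{L^2(\omega)} = 0$ forces $u_L \equiv 0$ by analyticity, contradicting $\|u_L\|_{L^2(\T^1)} = 1$. If $\tau_\infty = \infty$, the Fourier supports of the two pieces become disjoint, so Parseval gives $\|w_+^{(j)}\|_{L^2(\T^1)}^2 + \|w_-^{(j)}\|_{L^2(\T^1)}^2 = 1$, while the cross term $\int_\omega w_+^{(j)}\overline{w_-^{(j)}}e^{i(n_+^{(j)} - n_-^{(j)})x}\,dx$ tends to zero by Riemann--Lebesgue (as $n_+^{(j)} - n_-^{(j)} \to \infty$), forcing $\|w_\pm\|_{L^2(\omega)} = 0$ and hence $w_\pm \equiv 0$ by analyticity, the same contradiction. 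Once the on-shell observability is in hand, the lemma follows by combining $\|u_L\|_{L^2(\omega)} \leq \|u\|_{L^2(\omega)} + \|u_H\|_{L^2(\omega)}$ with the off-shell bound and using $\|\partial_x u_L\|_{L^2} \leq C\sqrt{\tau}\|u_L\|_{L^2}$, which holds because all frequencies in $\Lambda$ are of size $O(\sqrt{\tau})$.
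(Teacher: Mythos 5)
Your argument is essentially correct in its main thrust, but it takes a genuinely different route from the paper. The paper does not redo the Fourier analysis: it quotes the elementary $k=0$ estimate of \cite{BZ3} (if $(-\partial_x^2-\tau)u=\partial_x F+G$ then $\|u\|_{L^2}\leq C(\|F\|_{L^2}+\langle\tau\rangle^{-1/2}\|G\|_{L^2}+\|u\|_{L^2(\omega)})$), transfers it to the Floquet condition by multiplying by a cutoff $\chi$ that vanishes near a point of $\omega$ (so that $\chi v$ is honestly periodic and the commutator terms are supported in $\omega$), uses the characterization $\|f\|_{H^{-1}}=\inf\{\|F\|_{L^2}+\|H\|_{L^2}: f=\partial_x F+H\}$, and finally obtains the $\langle\tau\rangle^{-1/2}\|\partial_x u\|_{L^2}$ bound by pairing the equation with $u$ and integrating by parts. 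Your proof is self-contained: an on-shell/off-shell frequency splitting, explicit resolvent bounds off the characteristic set (these case checks are fine, including the derivative weight), and a compactness--contradiction argument giving observability for the $O(1)$ near-characteristic modes, uniformly in $k$ and $\tau$, with Riemann--Lebesgue killing the cross term between the two frequency clusters when $\tau\to\infty$. The trade-off is that the paper's route yields explicit constants and is very short given the cited input, while yours is longer but independent of \cite{BZ3}, at the price of a non-constructive constant coming from the compactness step (which is all the lemma requires).

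One step as written is wrong, though easily repaired inside your own framework: the claim that for $\tau<1$ the operator $-(\partial_x+ik)^2-\tau$ is ``uniformly elliptic'' and the estimate holds \emph{without} the $\|u\|_{L^2(\omega)}$ term. This fails: for $k=0$, $\tau=0$ the constants solve the homogeneous equation, and more generally $(n+k)^2-\tau$ can vanish or be arbitrarily small for $0\leq\tau<1$ (e.g. $k=\tfrac12$, $\tau=\tfrac14$, $n=0$), so the observation term is indispensable there and your reduction to $\tau\geq1$ is not legitimate as stated. The fix is immediate: genuine ellipticity holds only for, say, $\tau\leq-1$ (where $(n+k)^2-\tau\gtrsim\max(\langle n\rangle,\langle\tau\rangle^{1/2})$), and for $|\tau|\leq1$ you should run the same on-shell/off-shell argument with $\Lambda=\{n:|(n+k)^2-\tau|\leq M\langle\tau\rangle^{1/2}\}$, which is then a fixed finite set of low frequencies, so your ``$\tau_\infty<\infty$'' compactness case already covers it. With that adjustment the proof goes through and gives the lemma with $\langle\tau\rangle$ weights as stated.
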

\begin{proof} 
The elementary proof given in \cite{BZ3} shows that if
$ ( - \partial_x^2 - \tau ) u = \partial_x F + G $ then 
\begin{equation}
\label{eq:bsiam} \| u \|_{ L^2 ( \T^1 )}  \leq C \left( \| F \|_{ L^2 ( \T^1 )  } + 
\langle \tau \rangle^{-\frac12} \| G \|_{L^2 ( \T^1 ) } + 
\| u \|_{ L^2 ( \omega ) } \right). 
\end{equation}
We first claim that the result holds when $ \partial_x $
is replaced by $ \partial_x + i k $. Equivalently, that means
that \eqref{eq:lol} holds with $ k = 0 $ for functions which are not 
periodic but satisfy \eqref{eq:Fl}. We will work under the assumption
\eqref{eq:Fl}:
\[  ( - \partial_x^2 - \tau ) v = \partial_x F + G , \ \ v ( x + 2 \pi )
= e^{ 2\pi i k } v ( x) .\]

Choosing a parametrization on $ \T^1 $ so that $ 2 \pi \in \omega $
we take $ \chi \in \CI ( \T^1 ) $ equal to one in a neighbourhood of 
$  \T^1 \setminus \omega $, and vanishing in a neighbourhood of $ 2
\pi$. Hence, $ \supp \chi v \subset ( \epsilon , 2 \pi - \epsilon ) $
and $ u \chi v $ defines a function on  $ \T^1 $.  Applying 
\eqref{eq:bsiam} we obtain, using the properties of $ \chi$, 
\[ \begin{split} 
\| \chi v \|_{ L^2 ( \T^1) } & \leq C \left( \| F + 2 \chi' v \|_{ L^2
  (\T^1 ) } + \| G - \chi'' v \|_{L^2 ( \T^1 ) } + \| \chi v \|_{ L^2 (
  \T^1 )} \right) \\
& \leq 
C' \left( \| F  \|_{ L^2
  (\T^1 ) } + \| G  \|_{L^2 ( \T^1 ) } + \| v \|_{ L^2 (
  \T^1 )} \right) , 
\end{split} \]
that is, \eqref{eq:bsiam} holds for $ v $ satisfying \eqref{eq:Fl}.

Since $ \| f \|_{ H^{-1} } = \inf \{ \| F \|_{L^2 } + \| H\|_{L^2} : 
f = \partial_x F + H \} $, the estimate on $ \| u \|_{ L^2 ( \T^1 )}
$,
$ u ( x ) = e^{ 2 \pi i k x } v ( x ) $,  in \eqref{eq:lol} follows. 

To estimate $ \partial_x u $ we write
\[ \begin{split}  \| ( \partial_x + i k ) u \|^2_{L^2 ( \T^1 ) } &  = 
\langle  ( - ( \partial_x + i k ) ^2  - \tau ) u , u \rangle_{L^2 ( \T^1 ) } +
\tau \| u \|^2_{L^2 ( \T^1 ) } \\
& = 
 \langle f + g , u \rangle_{ L^2 ( \T^1 ) }  + \tau \| u \|_{L^2 ( \T^1)}^2
\\
&  \leq \| f \|_{H^{-1} ( \T^1 ) }   \| u \|_{H^1 ( \T^1)}  
 +  \| g \|_{L^2 ( \T^1 ) }  \| u \|_{L^2 ( \T^1)} 
 + \langle \tau \rangle \| u \|_{L^2 ( \T^1 )}^2 \\
& \leq \frac 12 \| ( \partial_x + i k )  u \|_{L^2 ( \T^1)} ^2 + C \| f \|_{  H^{-1} ( \T^1 ) }^2
+ C \| g \|_{L^2 ( \T^1)}^2 +  C \langle \tau \rangle \| u \|_{ L^2 ( \T^1 ) } ^2 . 
\end{split}
\]
Using the estimate for $ \| u \|_{ L^2 ( \T^1 ) } $ we obtain \eqref{eq:lol}.
\end{proof}

\begin{proof}[Proof of Proposition \ref{pL:1}] 
 With constant $ C_1 $ depending on $ \tau $ the estimate
\eqref{eq:s1d} follows from the unique continuation property 
for $ - \partial_x^2 + W $, $ W \in L^p $, $ p > 1$. 
As pointed out in \cite{JK}, this result in implicit in the paper of Schechter-Simon \cite{SS}

 To obtain the dependence of contants for large $ \langle \tau
\rangle $ we first observe that interpolation between the $ H^{-1} $ and $
L^2 $ estimates in Lemma \ref{l:old} shows that if 
$ ( -(\partial_x + i k ) ^2  - \tau) u= g + f $,
 then
\[ \begin{split} 
 \| u\|_{L^2}+ \langle\tau \rangle^{-\frac12}  \| \partial_x u\|_{L^2}
& \leq   C \langle \tau \rangle^{-\frac12}  \|g\|_{L^2}  
+ C  \langle \tau
\rangle^{\frac{s-1}2}  \| f  \|_{H^{-s}}  + C  \| u \|_{ L^2 (
  \omega ) } , 
\end{split} \]
for $ 0 \leq s \leq 1 $.
 As a consequence, if
$ ( - \partial_x^2 - \tau ) u = g - W u $, 
then 
\begin{equation}
\label{eq:spl1}  \begin{split}
  \| u \|_{ L^2  } &  \leq  C \langle \tau \rangle^{-\frac12} 
\| g\|_{L^2} 
 +  C
 \langle \tau \rangle^{ \frac{ s-1} 2}  \|Wu
\|_{H^{-s}} + C  \| u \|_{ L^2 ( \omega ) } . 
\end{split} \end{equation}

For $s<\frac12$,  $H^s( \T^1) \rightarrow L^{\frac 2
  {1-2s}}(\T^1)$ and hence, by duality, $ L^{\frac 2 {1+ 2s}}(\T^1)
\rightarrow H^{-s} ( \T^1)$. 
Choosing $s= \frac 1 {2p} < \frac 1 2$, and applying H\"older's
inequality we obtain 
\[ \begin{split} 
 \|Wu \|_{H^{-s}} &  \leq C \| W u \|_{ L^{\frac 2 { 1 + 2s } }  } 
\leq C \| W\|_{L^p} \|u\|_{L^{\frac{2 } { 1 - 2s } }  
}  \\
& \leq C  \| W \|_{ L^p  } \| u \|_{ H^s  }  \leq 
C' \| W \|_{ L^p  } \| u\|^{1-s}_{L^2} \left( \|  u\|_{L^2} + \| \partial_
x u \|_{L^2} \right)^{s} \\
& \leq C' \| W \|_{ L^p  } 
\left(   \langle \tau \rangle^{ ( 1 + \delta) \frac { s^2} { 2 ( 1 -s )} }   \|
  u\|_{L^2}+  \langle \tau \rangle^{-( 1+ \delta ) \frac s 2}  \| \partial_x
   u\|_{L^2} \right)  . 
\end{split} \]
Combining this with \eqref{eq:spl1} yields
\[ \begin{split} 
 \| u \|_{ L^2  } + \langle \tau \rangle^{-\frac12}
 \| \partial_x u\|_{L^2}  &  \leq  
 C \langle \tau \rangle^{-\frac12}  \|
 g\|_{L^2}    +  C  \| u \|_{ L^2 ( \omega ) }  
+ C_2  \langle \tau \rangle^{ \frac{s-1} 2} 
\langle \tau \rangle^{ ( 1 + \delta ) \frac{s^2} {2( 1 - s)} }  \| u
\|_{L^2} 
\\ 
& \ \ \ \ \ \ \ \ \ \ \ + 
C_3  \langle \tau \rangle^{\frac{s-1} 2  } 
\langle \tau \rangle^{ - ( 1+ \delta ) \frac s 2 }  \| u \|_{H^1} .
\end{split} \]
Since $ 0 < s < 1 $, taking $ \langle \tau \rangle $ large enough allows
us to absorb the last term on the right hand in the left
hand side.
Same is true for the third term since 
\[  \frac{ ( 1 + \delta ) s^2} { 2(1 - s) } + \frac{ s - 1} 2 = 
 \frac{  -1 + 2 s + \delta s^2 } { 1 -s } ,
\]
which  is negative for  $ 0 < s < \frac12 $ if we choose $ \delta $
small 
enough.
\end{proof}

\begin{proof}[Proof of Proposition~\ref{prop.1d}]
Let us now show how to pass from the estimate in
Proposition~\ref{pL:1} to an observability result. This was already
achieved in~\cite{BZ2} in a more general semiclassical setting. For
completeness we present a simple version of it here -- see \cite{Mi}.

 For $\chi \in C^\infty_0 ( \RR)$,  put $w= \chi(t)e^{itP} u_0$, which solves
\[ (i\partial_t + P ) w = i \chi'(t) e^{itP} u_0= v, \ \ P :=
- (\partial_x + i k ) ^2 + W ( x ) . \]
Taking Fourier transforms with respect to time, we get 
$$ (P- \tau) \widehat{w} ( \tau )= \widehat{v} ( \tau) .$$
Using the estimate in Proposition~\ref{pL:1},  we write
$$ \| \widehat{w} ( \tau)\|_{L^2 ( \T ) } \leq \frac{C} { 1+
  \sqrt{|\tau|}} \| \widehat{v} ( \tau)\|_{L^2 ( \T ) } + C  \|
\widehat{w} ( \tau) \|_{L^2 ( \omega) } . $$
Now, taking $L^2$ norm with respect to the $\tau$ variable, gives
$$ \| \widehat{w} ( \tau)\|_{L^2 ( \RR_\tau \times \T)  } \leq
\frac{C} { 1+ \sqrt{N}}\| \widehat{v} ( \tau)\|_{L^2 ( \RR_\tau \times
  \T) }+ C \| \widehat{w} ( \tau)\|_{L^2 ( \RR_\tau \times\omega) } +
\Bigl(\int_{|\tau \leq N} \| \widehat{v} ( \tau)\|^2_{L^2  ( \T ) }d\tau\Bigr)^{\frac12} .
$$ 
From this we notice that 
\begin{gather*}   \| \widehat{w} ( \tau)\|_{L^2 ( \RR_\tau \times \T) } = \| u_0\|
_{L^2 ( \T ) } \times \| \chi\|_{L^2(\RR ) }, \ \ \ \ \| \widehat{v} (
\tau)\|_{L^2 ( \RR_\tau \times \T) } = \| u_0\| _{L^2 ( \T )  } \times \| \chi' \|_{L^2(\RR)}, \\
\| \widehat{w} ( \tau)\|_{L^2 ( \RR_\tau \times \omega ) }= \|\chi(t)
e^{itP} u_0\|_{L^2(\RR_t \times \T)}.
\end{gather*}
From this we deduce that if 
$$ \frac{ C\|\chi'\|_{L^2}} { \| \chi\|_{L^2} ( 1 + \sqrt{ N}) } \leq
\frac 1 2,$$
then 
\begin{equation}
\label{eq:NN} \| u_0\|_{L^2_x}\leq C' \|\chi(t) e^{itP} u_0\|_{L^2(\RR_t \times
  \T_x)} + C' \Bigl(\int_{|\tau \leq N} \| \widehat{v} ( \tau)\|^2_{L^2_{\tau, x}}d\tau\Bigr)^{\frac12}  .
\end{equation}

To understand the last term on the right-hand side of we define 
Sobolev norms associated to $ P $. Let $ \{ \varphi_n \}_{n=1}^\infty $ be
an orthonormal basis of $ L^2 ( \T^1 ) $ consisting of eigenfuctions
of $ P$. We then put
\[   \| u \|_{ H_P^k }^2 := \sum_{ j=1}^\infty \langle \lambda_n
\rangle^{ 2k } |u_n |^2 , \ \ P \varphi_n = \lambda_n
\varphi_n , \ \  u_n :=  \langle u , \varphi_n \rangle . \]
In this notation $ w = \chi(t) \sum_{n} u_n e^{-it\lambda_n} \varphi_n $, 
and 
$$ \widehat{v} ( \tau)= \sum_n \widehat{\chi'} ( \tau - \lambda_n) u_n
\varphi_n . $$
Hence 
\[ \begin{split} 
\int_0^N \| \widehat{v } ( \tau)\|^2_{L^2_{ x}}d\tau & =
\sum_{ n=1}^\infty |u_n|^2 \int_0^N 
| ( \tau - \lambda_n ) \hat \chi ( \tau  - \lambda_n )|^2  d
\tau 
= \sum_{ n=1}^\infty | u_n|^2 \int_0^N  {\mathcal O} ( \langle \tau - \lambda_n
\rangle^{-\infty }  ) d \tau \\
& \leq 
C_{N , M} \sum_{ n=1}^\infty \langle \lambda_n \rangle^{-M} | u_n|^2 =
C_{N, M } \| u\|_{H_P^{-M} }^2 ,
\end{split}
\]
for any $ M $. Taking $ M = 2 $ and combining this with \eqref{eq:NN}
we obtain
\begin{equation}
\label{eq.1d}
\|u_0\|_{L^2( \T^1)} \leq C\|\chi(t) e^{itP} u_0\|_{L^2(\RR_t \times
  \omega)} +  C\| u_0\|_{H^{-2}_P( \T^1) }.
\end{equation}
 To complete the proof, it remains to eliminate the last term on the
 right hand side of~\eqref{eq.1d}. For this, 
we apply the now classical uniqueness-compactness argument of
Bardos-Lebeau-Rauch~\cite{BLR} (see also \cite[\S4]{BZ4}) or the direct argument presented in the
Appendix.
We note that both approaches rely on the unique continuation property
of $ - ( \partial_x + i k ) ^2 + W ( x ) $, $ W \in L^p ( \T^1) $, $ p > 1 $.
\end{proof}

For later use we also record the following approximation result:
\begin{prop}\label{cor.obs}
Assume that the sequence of potentials $W_j$ is converging to $W$ in
$L^p(\T^1)$, $p\geq 2$. Then there exist $ K_0>0$ such that for any $
k \in [0, 1 ) $ and $ u \in L^2 ( \T^1 ) $, and any $j\in \NN$,  
\begin{equation}
\label{eq:ob1d}
 \| u \|_{L^2 ( \T^1 ) }^2 \leq
{ K_0}   \int_0^T \| e^{ i t ((  \partial_x + i k )^2  - W_j ) } v \|_{ L^2 ( \omega )}^2 
dt . 
\end{equation}
\end{prop}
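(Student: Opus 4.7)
The plan is to follow the template of the proof of Proposition~\ref{prop.1d} verbatim but track the dependence on the potential at every step so that constants are uniform in $j$, then remove the remaining compact remainder by a contradiction argument whose only nontrivial input is that the limiting potential $W$ enjoys unique continuation.

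First I would check that the stationary estimate of Proposition~\ref{pL:1} holds with constants depending only on $\sup_j \|W_j\|_{L^p}$, which is finite by the hypothesis $W_j\to W$ in $L^p$. Inspecting the proof, every occurrence of $\|W\|_{L^p}$ enters polynomially through H\"older on $\|W u\|_{H^{-s}}$, so the estimate
\[ \|u\|_{L^2(\T^1)} \leq C\bigl(\langle\tau\rangle^{-\frac12}\|g\|_{L^2}+\|u\|_{L^2(\omega)}\bigr) \]
holds uniformly in $j$ and in $k\in[0,1)$ for solutions of $(-(\partial_x+ik)^2+W_j-\tau)u=g$. Repeating the Fourier-in-time argument at the end of the proof of Proposition~\ref{prop.1d} for $P_j:=-(\partial_x+ik)^2+W_j$ then produces
\[ \|u_0\|_{L^2(\T^1)}\leq C\|\chi(t) e^{itP_j}u_0\|_{L^2(\RR_t\times\omega)}+C\|u_0\|_{H^{-2}_{P_j}(\T^1)}, \]
with $C$ independent of $j$.

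Second I would replace the $j$-dependent Sobolev norm $\|\cdot\|_{H^{-2}_{P_j}}$ by the standard $\|\cdot\|_{H^{-2}(\T^1)}$. For $C_0$ large (depending only on $\|W\|_{L^1}$), $P_j+C_0\geq 1$, so $(P_j+C_0)\colon H^2\to L^2$ is an isomorphism whose norm is controlled by $1+\|W_j\|_{L^2}\leq 1+\sup_j\|W_j\|_{L^2}$; dualizing yields $\|u\|_{H^{-2}_{P_j}}\leq C\|u\|_{H^{-2}(\T^1)}$ uniformly in $j$.

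Third, I would eliminate the lower-order term by a uniform version of the uniqueness-compactness argument. Suppose the claim fails; then there exist indices $j_k$ and data $u_{0,k}$ with $\|u_{0,k}\|_{L^2}=1$ and $\int_0^T\|e^{itP_{j_k}}u_{0,k}\|^2_{L^2(\omega)}\,dt\to 0$. Extract a weakly convergent subsequence $u_{0,k}\rightharpoonup u_0$ in $L^2(\T^1)$; by Rellich compactness $u_{0,k}\to u_0$ strongly in $H^{-2}(\T^1)$, and the uniform estimate from the previous step forces $\|u_0\|_{H^{-2}}>0$, so $u_0\neq 0$. To pass to the limit in the observability integral I would use the splitting
\[ e^{itP_{j_k}}u_{0,k}-e^{itP}u_0=(e^{itP_{j_k}}-e^{itP})u_0+e^{itP_{j_k}}(u_{0,k}-u_0), \]
the first term tending to $0$ strongly in $L^\infty([0,T];L^2)$ by a one-dimensional analog of Proposition~\ref{p:VV} (whose proof is identical, invoking Proposition~\ref{p:WW} in place of Proposition~\ref{lem.zyg}), and the second being weakly null in $L^2$ pointwise in $t$ because $e^{-itP_{j_k}}\phi\to e^{-itP}\phi$ strongly for each test $\phi$. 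Weak lower semicontinuity of $v\mapsto\int_0^T\|v(t)\|_{L^2(\omega)}^2\,dt$ then gives $e^{itP}u_0\equiv 0$ on $[0,T]\times\omega$, and strong unique continuation for $-(\partial_x+ik)^2+W$ with $W\in L^p$, $p>1$ (the Schechter--Simon result cited in the proof of Proposition~\ref{pL:1}), forces $u_0\equiv 0$, contradicting $u_0\neq 0$.

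The main obstacle is the third step: even with a uniform remainder in hand one must transfer the compactness-uniqueness argument from a single $P$ to a sequence $P_{j_k}$. This requires both the uniform embedding $H^{-2}_{P_j}\hookrightarrow H^{-2}(\T^1)$ (so that Rellich applies in a $j$-independent space) and convergence of the perturbed propagators applied to the varying data $u_{0,k}$, which is where the one-dimensional Strichartz estimate of Proposition~\ref{p:WW} is essential.
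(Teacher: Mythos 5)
Your overall strategy---rerunning the proof of Proposition~\ref{prop.1d} with constants tracked in $j$ and then removing the remainder by a sequential compactness--uniqueness argument---can probably be made to work, but as written it has genuine gaps. First, the uniformity in $j$ of Proposition~\ref{pL:1} is not obtained by ``inspecting the proof'': the H\"older/absorption part of that argument only covers $\langle\tau\rangle$ large, while for bounded $\tau$ the estimate rests on a qualitative unique continuation statement (Schechter--Simon) whose constant is not visibly a function of $\|W\|_{L^p}$ alone; to get a $j$-independent (and $k$-independent) constant there you must either quote the statement of Proposition~\ref{pL:1} as given or run a separate compactness argument over the quasimodes associated with $(W_{j},\tau,k)$, which you have not supplied. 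Second, the estimate to be proved is uniform in the Floquet parameter $k\in[0,1)$, but your contradiction sequence carries only indices $j_k$ and data $u_{0,k}$, with the Floquet parameter held fixed (you even reuse the letter $k$ as the subsequence index); as written you only obtain a constant $K_0(k)$ for each fixed $k$. Repairing this requires putting $k_m\to k_\infty$ into the sequence, and since varying $k$ is an unbounded, first-order perturbation, the bounded Duhamel argument no longer applies and you need, e.g., strong resolvent convergence of $-(\partial_x+ik_m)^2+W_{j_m}$; you must also allow $j_m$ bounded, in which case the limit operator is some fixed $P_{j_0}$, not $P$. A smaller point: passing from ``$e^{itP}u_0\equiv 0$ on $(0,T)\times\omega$'' to $u_0=0$ needs the standard intermediate reduction to eigenfunctions (time analyticity/almost periodicity, or the Bardos--Lebeau--Rauch finite-dimensional invariant subspace), not merely the stationary unique continuation property you cite.

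More importantly, the paper's proof is far shorter and you already hold all of its ingredients: the bound you introduce as a ``one-dimensional analog of Proposition~\ref{p:VV}'', namely $\|e^{-itP}v-e^{-itP_j}v\|_{L^\infty([0,T];L^2)}\le C\sqrt{T}\,\|W-W_j\|_{L^2}\|v\|_{L^2}$ (Duhamel plus \eqref{eq.est3}), is essentially the entire argument. Inserting it into the observability estimate \eqref{eq:ob1d-b} for the limit potential $W$ (Proposition~\ref{prop.1d}, which is already uniform in $k$) gives $\|v\|_{L^2}^2\le 2K_0\int_0^T\|e^{-itP_j}v\|^2_{L^2(\omega)}\,dt+C'\,T^2\|W-W_j\|^2_{L^2}\|v\|_{L^2}^2$, and the last term is absorbed as soon as $\|W-W_j\|_{L^2}$ is small, i.e.\ for $j\ge j_0$; the finitely many remaining $j$ are handled by applying Proposition~\ref{prop.1d} to each $W_j$ separately. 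Since $p\ge 2$ gives convergence in $L^2(\T^1)$, no uniform resolvent estimates, uniform remainder terms, or compactness--uniqueness in $j$ are needed.
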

 \begin{proof} The proof follows from Proposition~\ref{prop.1d} by a
   simple perturbation argument. Put $ P = - ( \partial_x + ik)^2 + W
   $ and $ P_j = - ( \partial_x^2 + ik)^2 + W_j $. Then,
according to the Duhamel formula, we have 
 $$ e^{-it P  } v = e^{-itP_j  } v + \frac 1 i \int_0^t e^{-i(t-s) P_j } (W-W_j)e^{-isP } v ds,$$
 and consequently, according to~\eqref{eq.est3} we obtain
\[ \begin{split}
 \| e^{-itP } v -e^{-itP_j } v\|
 _{L^\infty([0,T]; L^2( \T^1))} & \leq C \|(W-W_j) e^{-isP } v \|_{L^1([0,T]; L^2( \T^1))} \\
&  \leq  C \sqrt{T} \|W-W_j\|_{L^2( \T^1)}  \|e^{-is P } v \|_{L^\infty(\T^1_x;  L^2( 0,T))}\\
&  \leq C \sqrt{T} \|W-W_j\|_{L^2( \T^1)}  \|v \|_{L^2( \T^1)}.
 \end{split} \]
 According to~\eqref{eq:ob1d-b} we have 
\[  \begin{split}
  \| v \|_{L^2 ( \T^1 ) }^2 & \leq
{ K_0}   \int_0^T \| e^{ - i t P } v \|_{ L^2 ( \omega )}^2\\
& \leq 2 K_0 \int_0^T \| e^{ -i t P_j } v \|_{ L^2 ( \omega )}^2 + 2 C^2 T \|W-W_j\|^2_{L^2( \T^1)}  \|v \|^2_{L^2( \T^1)}.
\end{split} \]
which implies~\eqref{eq:ob1d} if  $\|W-W_j \|_{L^2( \T^1)} $ is small enough. 
 \end{proof}

 \section{semiclassical observation estimates in dimension $2$}
\label{semi}

We revisit and refine the arguments of \cite{BZ4}. 
The key point in our analysis will be the following 
variant of \cite[Proposition 3.1]{BZ4}. The key difference is that
now the main constant is determined in terms of the geometry of the
problem and the potential $ V $.
  \begin{prop}\label{th:3}
Suppose that $ V_j \in \CI ( \T^2 ; \RR ) $ converge to  $ V $ in the $  L^2 (
\T^2 ) $ topology.  Let $\chi\in \CIc (  -1 , 1 )$ be equal to $1$ near $0$, and define
$$ \Pi_{h, \rho, j } (u_0)  := \chi \left( \frac{ h^2(- \Delta + V_j) -1}
  {\rho} \right) u_0\,,  \ \ \ \rho > 0 \,. $$
 Then for any non-empty open subset $ \Omega $ of $ \T^2 $ and $T>0$,
 there exists a constant $K>0$
such that for any $j$ there exist $\rho_j>0, h_{0,j}>0$  such that for any
$0<h<h_{0,j}$, $u_0\in L^2( \T^2)$, 
we have  
\begin{equation} \label{eq:semi}
 \|\Pi_{h, \rho_j, j } u_0\|_{L^2}^2 \leq K \int_{0}^T \| e^{-i t ( - \Delta +
   V_j )} \Pi_{h, \rho_j, j } u_0\|_{L^2 ( \Omega ) }^2 dt \,.
\end{equation} 
\end{prop}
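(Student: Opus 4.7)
The plan is to argue by contradiction, following the semiclassical scheme of \cite[Proposition 3.1]{BZ4} but inserting the new perturbation and approximation results from Sections~\ref{apri} and \ref{odo} so that the resulting constant $K$ depends only on $\Omega$, $T$, and the limit potential $V$. Negating the conclusion with a diagonal argument, for each $n\in\NN$ I extract $j_n$, $\rho_n\searrow 0$, $h_n\searrow 0$ and $u_n\in L^2(\T^2)$ with $\tilde u_n:=\Pi_{h_n,\rho_n,j_n} u_n$ normalized in $L^2$ and satisfying
\[ \int_0^T \|e^{-it(-\Delta+V_{j_n})}\tilde u_n\|^2_{L^2(\Omega)}\,dt \longrightarrow 0. \]
Proposition~\ref{p:VV} applied to the Cauchy sequence $V_{j_n}\to V$ then lets me replace $V_{j_n}$ by the limit $V$ in the propagator modulo an error that vanishes in $L^\infty_t L^2_x$, so the same vanishing holds for $e^{-it(-\Delta+V)}\tilde u_n$ on $[0,T]\times\Omega$.

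After passing to a subsequence, $\tilde u_n$ defines a semiclassical defect measure $\mu_0$ on $T^*\T^2$, and the family $e^{-it(-\Delta+V)}\tilde u_n$ gives rise to a time-dependent family $\mu_t$. Two properties are key. First, $\mu_0$ is a probability measure supported on $\{|\xi|^2=1\}$: the spectral cutoff $\chi((h_n^2(-\Delta+V_{j_n})-1)/\rho_n)$ forces this, since for each fixed $n$ the potential $V_{j_n}$ is smooth, so that $\rho_n$ can be chosen large relative to $h_n^2\|V_{j_n}\|_{L^\infty}$ while still tending to $0$. Second, $\mu_t=(\Phi_t)_*\mu_0$, where $\Phi_t(x,\xi)=(x+2t\xi,\xi)$ is the geodesic flow on the torus; this is an Egorov-type invariance for $-h_n^2\Delta$, the $V$-contribution being of lower semiclassical order and controlled through the $L^2$--dispersion estimates of Proposition~\ref{lem.zyg}. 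The failure of observability now says that $\mu_t$ carries no mass on $\Omega\times\RR^2$ for almost every $t\in(0,T)$.

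It remains to show that any such $\mu$ must vanish, contradicting $\mu_0(T^*\T^2)=1$. Decompose $\mu_0$ according to the direction $\xi/|\xi|$ on $\{|\xi|=1\}$. For irrational directions the orbit $\{x+2t\xi\}$ is equidistributed in $\T^2$, so flow invariance combined with $\mu_t(\Omega\times\RR^2)=0$ forces the corresponding contribution to vanish. The main obstacle, and the heart of the argument, lies in the contribution of rational directions, where $\Phi_t$ is periodic and invariance alone is not enough. There one performs a second microlocalization along each primitive periodic direction and obtains an auxiliary measure whose dynamics, after averaging transversally, reduces matters to a one-dimensional observability estimate on $\T^1$ of the form
\[ \|v\|_{L^2(\T^1)}^2 \leq K\int_0^T \|e^{it((\partial_x+ik)^2-W)}v\|^2_{L^2(\omega)}\,dt, \]
with $W\in L^2(\T^1)$ produced by Fubini from $V\in L^2(\T^2)$ and with $\omega\subset\T^1$ the image of $\Omega$ under projection transverse to the periodic direction. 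Exactly this estimate is supplied by Proposition~\ref{prop.1d} at $p=2$, and Proposition~\ref{cor.obs} provides the stability under $L^2$-approximation of $W$ by smooth transverse averages of $V_{j_n}$. This eliminates the rational part of $\mu$ as well, producing the desired contradiction and completing the proof.
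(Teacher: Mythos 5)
Your overall strategy is the same as the paper's: argue by contradiction, extract semiclassical defect measures, use equidistribution for irrational directions, and for rational directions perform a second microlocalization, replace the potential by its transverse average, and invoke the one-dimensional observability of Propositions \ref{prop.1d} and \ref{cor.obs} (this is indeed where the $L^2$-dependence of the constant enters). The genuine structural difference is your contradiction set-up: you diagonalize in $j$ with $K=n\to\infty$, so the limiting measure has \emph{zero} mass over $(0,T)\times\Omega$ and every step becomes qualitative. This buys you something real: since each rational direction can then be killed separately (zero mass over $\Omega$ times a direction-dependent constant is still zero, and there are only countably many directions), you do not need the paper's splitting of the rational directions into finitely many ``isolated'' ones and the ``dense'' ones with $\sqrt{p^2+q^2}\geq N$ handled by the geometric Lemma~\ref{lem.rat}. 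The paper, by contrast, keeps $j$ fixed, so its measure $\mu_j$ has $\Omega$-mass only bounded by $1/K$, and the final contradiction requires a single finite constant valid for \emph{all} directions simultaneously --- hence the threshold $N$, Lemma~\ref{lem.rat}, and the maximum over the finitely many $K_{p,q}$. Your route trades that quantitative bookkeeping for two obligations the paper does not have: (i) the negation does not force $j_n\to\infty$, so you must treat the bounded-$j_n$ case separately (easy: pass to a constant subsequence and contradict the smooth-potential result of \cite[Prop.~3.1]{BZ4}, but it must be said, since your appeal to Proposition~\ref{p:VV} with $V_{j_n}\to V$ is vacuous there); and (ii) all the semiclassical machinery must survive $j_n\to\infty$, which works only because the quantifiers let you choose $h_n$ (and $\rho_n$) after $j_n$ --- a point you make correctly for the spectral localization.

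The one place where your justification is off is the flow invariance of the measure: the dispersive estimates of Proposition~\ref{lem.zyg} are not the relevant tool, and for the rough limit potential $V\in L^2$ the commutator term $h\langle [V,a(x,hD)]u,u\rangle$ is not obviously $o(1)$. The correct fix is the same device you used for the support property: prove invariance with the smooth $V_{j_n}$ (standard Egorov/commutator argument), choosing $h_n$ so small relative to, say, $\|\nabla V_{j_n}\|_{L^\infty}$ that the potential commutator vanishes in the limit; the replacement of $e^{-it(-\Delta+V_{j_n})}$ by $e^{-it(-\Delta+V)}$ via Proposition~\ref{p:VV} only moves the data by $o_{L^2}(1)$ and so does not change the measure. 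Finally, the mechanism by which the transverse average $W_j(x)$ replaces $V_j(x,y)$ --- the normal form conjugation by $\Id+hQ_j$ of Proposition~\ref{p:2}, the control of the error $P_j$ on the support of the $\epsilon$-microlocalized measure, and the Floquet decomposition in the longitudinal variable --- is only named in your sketch; it is the main technical step for the rational directions and needs to be carried out (as in \cite[\S 3]{BZ4} and \S\ref{semi} of the paper), together with the observation that $\|W_j-W\|_{L^2}\leq C_{\Xi_0}\|V_j-V\|_{L^2(\T^2)}$, which is what makes Proposition~\ref{cor.obs} applicable with a constant independent of $n$.
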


In the proof
we argue by contradiction. We first observe that if the estimate~\eqref{eq:semi} is true for some $\rho>0$,
then is is true for all $0<\rho'<\rho$. As a consequence, if
~\eqref{eq:semi} were false then for any $j$,  there would exist sequences 
$$ h_{n, j} \longrightarrow 0, \quad \rho_{n, j} \longrightarrow 0, \quad u_{0,n,j}=
\Pi_{h_{n,j}, \rho_{n,j}, j} (v_{0,n,j}) \in L^2,  $$
$$  i \partial_t u_{n,j} ( t, z ) = ( - \Delta + V_j( z ) ) u_{n,j} ( t, z )
\,, \ \  u_{n,j} ( 0 , z ) = u_{0,n,j} ( z ) \,, $$
such that 
$$ 1= \| u_{0,n,j} \|^2_{L^2} , \qquad \int_{0}^T
\|u_{n,j} ( t , \bullet ) \|_{L^2(\Omega)}^2 dt \leq \frac 1 K\,. $$
Each sequence $n \mapsto u_{n, j} $ is bounded in $L^2_{\rm{loc}} ( \mathbb{R} \times
\T^2)$ and consequently, after possibly extracting a subsequence,
there exists  a semiclassical defect measure $\mu_j$ on $\R_t \times T^* (\T^2_z)$ such
that for any function $\varphi \in 
{\mathcal C}^0_0 ( \R_t)$ and any $a\in \CIc ( T^*\T^2_z)$, we have 
\begin{equation}
\label{eq:muj}
\begin{split} \langle \mu_j, \varphi(t) a(z, \zeta)\rangle &   = \lim_{n\rightarrow  \infty} \int_{\R_t \times \T^2} \varphi(t) 
( a(z, h_{n,j} D_z)  u_{n,j} ) ( t , z ) \overline u_{n,j} (t,z) dt dz \,.
\end{split} 
\end{equation}

\renewcommand\thefootnote{\ddag}%

Furthermore, standard arguments\footnote{see \cite{AM} for a review of recent 
results about measures used for the Schr\"odinger equation.} 
show that the measure $\mu_j$ satisfies
\begin{itemize}
\item 
\begin{equation}\label{eq.nonvan}
 \mu_j ( (t_0 , t_1 ) \times T^*\T^2_z) = t_1 - t_0  \,. 
\end{equation}
\item  The measure $\mu_j $ on $\R_t \times T^* (\T^2)$ is supported in the set 
$$ \Sigma := \{(t,z, \zeta) \in \R_t \times \T^2_z \times \R^2_\zeta
\; : \;  |\zeta|=1\}$$
and is  invariant  under the action of the geodesic flow:
\begin{equation}
\label{eq:invm} \xi \cdot \nabla_x ( \mu_j ) =0
\end{equation}
\item The mass of the measure on $\Omega$ is bounded away from $0$:
\begin{equation}\label{eq.nonvan-2}
\mu_j((0,T) \times T^*\Omega) \leq \frac 1 K.
\end{equation} 
\end{itemize}

We are going to show that a proper choice of the constant $K $ above
contradicts \eqref{eq.nonvan}.  
When no confusion is likely to occur we will drop the index $j$ for
conciseness.  

We start by decomposing $ \Sigma $ into to its rational and irrational 
parts. For that we identify $ \T^2 \simeq [ 0, A )_x \times [ 0, B)_y
$ where $ A , B \in \RR\setminus \{ 0 \}$, and define
$$ \Sigma_{\Q} : = \Sigma \cap \left\{ (t, z, \frac{ ( Ap,Bq)} {
    \sqrt{ A^2 p^2
      + B^^2 q^2}}); p, q \in \Z,  {\rm gcd}(p, q)  =1 \right\}.$$
The flow on $ \Sigma_{\Q } $ is periodic. Its complement is the set of irrational points:
$$ \Sigma_{\RR\setminus \Q} := \Sigma \setminus \Sigma_{\Q}$$ 
and it also invariant under the flow.

\subsection{The irrational directions}
\label{irr}
For simplicity we assume here that $ A = B = 2\pi $, that is 
$ \T^2 = \T^1 \times T^1 $,  as the argument
is the same as in the general case.

Let us first define $\mu_{\RR\setminus \Q}$ to be the restriction of 
the measure $\mu$ to $ \Sigma_{ \RR \setminus \Q}$. Since $ \mu $ is 
invariant, for any open set $\Omega \subset T^2$, and any $s\in \RR$, 
$$ \mu_{\RR\setminus \Q}((t_1, t_2) \times \Omega \times \R^2) = \mu_{\RR\setminus \Q}  ((t_1, t_2) \times  \Phi_s(\Omega \times \R^2))
$$
where the flow $\Phi_s$ is defined by  $ \Phi_s (z , \zeta)= (z+ s\zeta,
\zeta)$. 
As a consequence, we obtain 
\[ \begin{split} 
 \mu_{\RR\setminus \Q} ((t_1, t_2) \times \Omega \times \R^2) & = \frac 1 T \int_0 ^T \mu_{\RR\setminus \Q}  ((t_1, t_2) \times  \Phi_s(\Omega \times \R^2)) \\
&  = \int \bbbone_{t\in (t_1, t_2)} \times \frac 1 T \int_0 ^T \bbbone_{(z, \zeta)
  \in \Phi_s(\Omega\times \RR^2)} ds d\mu_{\RR\setminus \Q} . 
 \end{split}. \]
The equidistribution theorem shows that for any $(z, \zeta) $ in the support of  $\mu_{\RR\setminus \Q}$, 
 $$ \lim_{ T \to \infty } \frac 1 T \int_0 ^T \bbbone_{(z, \zeta) \in
   \Phi_{s} ( \Omega\times \RR^2) } ds = \frac {{\vol}( \Omega)} { {\vol} ( \T^2)}. $$
Hence the dominated convergence theorem and \eqref{eq.nonvan} show that
 \begin{equation}\label{eq.contrainte1}
   \mu_{\RR\setminus \Q} ((t_1, t_2) \times \Omega \times \R^2)= \frac {{\vol}( \Omega)} { {\vol} ( \T^2)} \mu_{\RR\setminus \Q} ((t_1, t_2) \times \T^2 \times \R^2).
   \end{equation}

 \subsection{Dense rational directions} 
\label{dr}

 We now consider the restriction of the measure $\mu$ on the set of
 rational directions, $\Sigma_{\Q}$.  We first consider the case of $
 p/q $ for which $ p^2 + q^2 $ is large (we again assume that $ A = B
 = 1 $ as the general argument is the same). In some sense that
 corresponds to being close to the irrational case.

 \begin{lem} \label{lem.rat}
 For any open set $\Omega$, there exists $N\in \N, \delta >0$ such
 that for any  $(p, q) \in \Z^2$, $ {\rm gcd}(p, q)  =1$, $\sqrt{p^2 + q^2}\geq N$, 
 $$ \liminf_{T\rightarrow + \infty} \frac 1 T \int_0^T \bbbone_{(z,
   \zeta) \in \Phi_{s}(\Omega\times \RR^2)} ds \geq \delta ,  \ \ \ 
\zeta = \frac{ ( p ,q ) } { \sqrt { p^2  + q^2 } } . $$
 \end{lem}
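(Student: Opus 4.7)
The plan is as follows. Since $\zeta = (p,q)/L$ with $\gcd(p,q)=1$ and $L := \sqrt{p^2+q^2}$, the orbit through any $z\in\T^2$ under the translation flow in direction $\zeta$ is closed with minimal period exactly $L$. Consequently the time average appearing in the statement is actually a limit, equal to the space average along the closed geodesic:
\[
\lim_{T \to \infty} \frac{1}{T}\int_0^T \bbbone_\Omega(z - s\zeta)\,ds \;=\; \frac{\ell_\Omega(z)}{L},
\]
where $\ell_\Omega(z)$ denotes the total arc length of the closed geodesic through $z$ lying in $\Omega$. So I must prove a uniform lower bound $\ell_\Omega(z)/L \geq \delta > 0$ for all $z\in\T^2$ and all coprime $(p,q)$ with $L \geq N$.

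Since $\Omega$ is open and non-empty, I fix once and for all a ball $B(z_\ast, r) \subset \Omega$ of some small radius $r < 1/2$, and replace $\Omega$ by this ball (which only decreases $\ell$). One period of the closed geodesic lifts to a segment of length $L$ in $\R^2$ in direction $\zeta$; the ball lifts to the disjoint family $\{B(z_\ast + n, r) : n \in \Z^2\}$, and the segment meets each lifted ball in a chord of length $2\sqrt{r^2 - d_n^2}$ whenever the perpendicular distance $d_n = |(z - z_\ast - n)\cdot \zeta^\perp|$ is at most $r$, with $\zeta^\perp = (-q,p)/L$. Because $\gcd(p,q) = 1$, the projections $\{n\cdot\zeta^\perp : n\in\Z^2\}$ fill exactly the lattice $(1/L)\Z$; moreover for each value $k/L$, the translates $n$ sharing that perpendicular offset form a coset of $\Z\cdot(p,q)$ and hence have tangential spacing $L$ along $\zeta$. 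Summing the contributions over all $n$'s in a given coset, the portions of chords that overflow the segment at one end wrap around and reassemble at the other end by periodicity into a single chord of length $2\sqrt{r^2 - (a-k/L)^2}$, where $a = (z-z_\ast)\cdot\zeta^\perp$. This yields the identity
\[
\ell_{B(z_\ast, r)}(z) \;=\; \sum_{\substack{k\in\Z \\ |a - k/L| \leq r}} 2\sqrt{r^2 - (a - k/L)^2}.
\]

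The right-hand side is exactly $L$ times a Riemann sum of mesh $1/L$ and phase $a$ for $\int_\R f(u)\,du = \pi r^2$, where $f(u) = 2\sqrt{(r^2-u^2)_+}$ is continuous and compactly supported. By uniform continuity of $f$, these Riemann sums converge to $\pi r^2$ as $L\to\infty$, uniformly in the phase $a$, so there exists $N$ depending only on $r$ (hence only on $\Omega$) such that $\ell_{B(z_\ast, r)}(z) \geq \pi r^2 L / 2$ for every $z$ and every coprime $(p,q)$ with $L \geq N$. Taking $\delta = \pi r^2/2$ finishes the argument. The delicate step is the chord-counting identity, where $\gcd(p,q) = 1$ is used twice (surjectivity $\{n\cdot\zeta^\perp\}=(1/L)\Z$ and spacing $L$ of the fibre lattice along $\zeta$); once that is in hand, the remainder is a routine uniform Riemann-sum estimate.
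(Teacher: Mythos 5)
Your proof is correct, and while it rests on the same arithmetic fact as the paper's — coprimality of $(p,q)$ forces the closed orbit to be transversally equidistributed at scale $1/L$ — the way you exploit it is genuinely different and sharper. The paper fixes a ball $B(z_0,2\epsilon)\subset\Omega$, reduces to $p\ge q$, and counts crossings of the line $x=x_0$ during one period: since $q$ generates $\Z/p\Z$, the $p$ crossing heights are exactly $2\pi/p$-spaced, so for $p\gtrsim 1/\epsilon$ at least $[\epsilon p/\pi]$ of them lie within $\epsilon$ of $y_0$, each contributing time at least $\epsilon$; this yields a crude lower bound $\delta\sim\epsilon^2$. You instead compute the sojourn time per period \emph{exactly}: coprimality gives both that the perpendicular offsets $n\cdot\zeta^\perp$ fill $(1/L)\Z$ and that each fibre is a coset of $\Z\,(p,q)$ with tangential spacing $L$, whence the time in the ball per period equals the chord sum $\sum_k 2\sqrt{\bigl(r^2-(a-k/L)^2\bigr)_+}$, a Riemann sum of mesh $1/L$ for the disc area; uniform convergence in the phase $a$ then gives $\delta=\pi r^2/2$, essentially the optimal constant (the area fraction of the ball), upgrades the liminf to a limit, and avoids the case split between $p\ge q$ and $q\ge p$. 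What the paper's count buys is brevity — no wrap-around or disjointness bookkeeping — while your identity buys the asymptotically sharp constant and cleaner uniformity in $z$; the two points your argument genuinely needs, and which you do supply, are that chords of distinct lifted balls are disjoint because $r<1/2$, and that each chord has length $2r<1\le L$ so the $L$-periodic family from a fixed coset contributes exactly one chord length per period window.
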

\begin{proof}
For any $z_0= (x_0, y_0)\in \Omega$ choose $ N >  4 \pi/ \epsilon $ 
where $B(z_0, 2 \epsilon) \subset \Omega$. Assume that $p\geq N/2 > 2
\pi / \epsilon $ and that $ p \geq q $ (the case of $ q \leq p $ is similar).
Put  
\[ s_k := \frac{ \sqrt{ p^2 + q^2} } p (  2k \pi - x_0 ) , \  \ \ \ k
= 0, \cdots, p-1 .\]  
Since $ p $ and $  q $ are co-prime  $q$ is a generator of the group
$\Z /p\Z$. Consequently, the points 
\[ Y_k = \frac {s_k} {\sqrt{p^2+q^2}} q  - y_0  \in   \T^1, \]
are at distance exactly $2\pi /p$ from each other. (Here, and below,
addition on $ \T^1$ is meant $\mod 2 \pi \Z $.)
We conclude that for any $ z \in \T^1 $ there exists 
\[ J_z \subset \{ 0, \dots, p-1\}, \ \ \ \ \ | J_z |  = [ \textstyle{\frac {\epsilon
  p} {\pi}} ] , \ \ \  \text{ such that for $ k \in J_z , \  |y + Y_{k} - y_0| \leq
\epsilon. $} \]
Since the flow is given by 
 $$ \Phi_{-s} \left ( (x,y), \frac{ (p,q)} { \sqrt{p^2 + q^2}} \right)
 = \left((x,y)- \frac s {\sqrt{p^2+q^2}} (p,q), \frac{ (p,q)} {
     \sqrt{p^2 + q^2}}\right),$$
for any $k\in J$, 
$ \Phi_{-s_{k}} \left( z ,{ (p,q)}/ { \sqrt{p^2 + q^2}} \right)
\in B( z_0, \epsilon)\times \RR^2 $. 
Since $  {2\pi} / p < \epsilon $, we also obtain that for $|s-s_{k}|<\epsilon$  
 $$\Phi_{-s} \left( z ,\frac{ (p,q)} { \sqrt{p^2 + q^2}} \right) \in B( z_0, 2
 \epsilon )\times \RR^2 \subset \Omega \times \RR^2 .$$ 
Hence, using the assumption that $ q \leq p $, 
$$  \int _0 ^{ 2 \pi \sqrt { p^2 + q^2 } } \bbbone_{ \Phi_{-s} (
   z, \zeta ) \in \Omega \times
   \RR^2} ds \geq  
[ \textstyle{ \frac {\epsilon p} {\pi} }  ]\epsilon 
> 2 \pi \sqrt { p^2 + q^2 } \delta 
, \ \ \ 
\zeta = \frac{ ( p ,q ) } { \sqrt { p^2  + q^2 } } , $$ 
for some $ \delta > 0 $. 
Since the evolution of $ ( z, \zeta ) $  is periodic with period $ 2 \pi \sqrt { p^2 + q^2}
$, the lemma follows. 
\end{proof} 

Let us now fix $N$ as in Lemma~\ref{lem.rat} and Let $\mu_{\Q ,N}$ be
the restriction of $\mu_{\Q}$ 
to rational directions satisfying 
$\sqrt{p^2 + q^2} \geq N$. As in the study of the irrational
directions, Lemma~\ref{lem.rat} and Fatou's  Lemma imply
 \begin{equation}\label{eq.contrainte2}
  \mu_{\Q,N} ((t_1, t_2) \times \Omega \times \R^2)\geq \delta \mu_{\Q,N} ((t_1, t_2) \times \T^2 \times \R^2).
  \end{equation}

  \subsection{Isolated rational directions}
This section is  closest to the arguments of \cite[\S3]{BZ4}. We
allow here existence of points in $ \Sigma_{\Q} $ whose evolution
misses $ \Omega $ altogether. The contradiction is derived from that
assumption. It is now important to keep $ A$ and $ B $ arbitrary,
$  \T^2 = \RR^2 / A \ZZ \times B \ZZ $.
The constraints on the constant ${K}$ will not be only geometric as in
\S\S \ref{irr},\ref{dr},  but will also involve the limit potential
$V$. Hence we return to the notation of \eqref{eq:muj} 
and keep the index $j$.

\begin{figure}[ht]
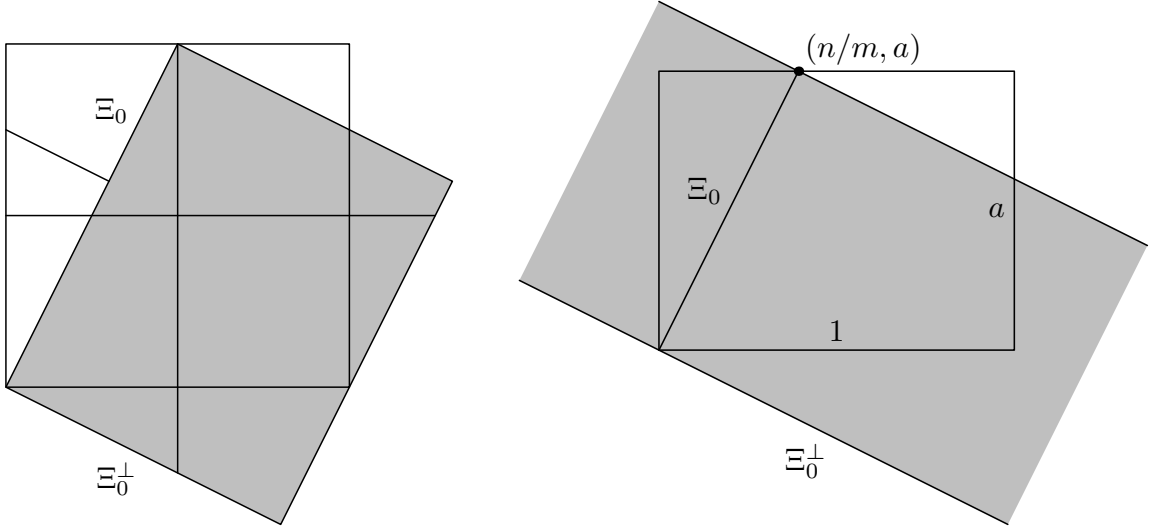

\includegraphics[width=6cm]{m11.mps}\qquad\includegraphics[height=7cm]{m12.mps}
\caption{On the left, a rectangle, $ R $, covering a rational torus $ \TT^2                      
  $. In that case we obtain a periodic solution on $ R $. On the
  right, the irrational case: the strip with sides $ m \Xi_0                                     
  \times   \RR \Xi_0^\perp $, $ \Xi_0 = ( n/m, a ) $ (not normalized
  to have norm one),
  also covers the torus $ [ 0 , 1 ] \times [ 0 , a ] $. Periodic
  functions are pulled back to functions satisfying
  \eqref{eq:per}. This figure is borrowed from \cite{BZ4}.}
\label{f:3}
\end{figure}

We consider the restriction of the measure $\mu$ to
any of the finitely many isolated rational directions:
\begin{equation}
\label{eq:Xi0}
\Xi_0 = \frac{ (Ap,B q)} {\sqrt{A p^2  + B q^2}},  \ \  \sqrt{p^2 + q^2} \leq N 
\end{equation}

We first recall the following simple result \cite[Lemma 2.7]{BZ4}
(see Fig.~\ref{f:3} for an illustration).
\begin{lem}
\label{l:si}
Suppose that $ \Xi_0 $ is given by \eqref{eq:Xi0} and
\begin{equation}
\label{eq:F} \; : \; ( x, y ) \longmapsto  z = F ( x, y )  = x
\Xi_0^\perp  + y \Xi_0 \,, \ \ 
\ \Xi_0^\perp  = \frac1 { \sqrt { n^2 A^2 + m^2 B^2 } } ( - m B , n
A  ) \,. 
\end{equation}
If 
 $ u = u ( z ) $ is perodic with respect to $ A \ZZ \times B  \ZZ$
then 
\begin{equation}
\label{eq:per}
 F^* u ( x + k a , y + \ell b ) = F^* u ( x , y - k \gamma ) \,, \ \ k,
 \ell \in \ZZ \,,  \ \ ( x, y ) \in \RR^2 \,, 
\end{equation}
where, for any fixed $ p , q \in \ZZ $, 
\[ a =   \frac {( q n - p m ) A B } { \sqrt {  n^2 A^2 + m^2 B ^2 } }    \,,
\ \      b =  \sqrt {  n^2 A^2  + m^2 B^2 }  \,, \ \  \gamma = -
\frac{ pn A^2 + q m B^2  }{  \sqrt { n^2 A^2 + m^2 B^2 } }  \,. \]
When $ B/A = r/s \in \Q $ then 
\[  F^* u ( x + k \widetilde a , y + \ell b ) = F^* u ( x, y ) \,, \ \  \ \ k,
 \ell \in \ZZ \,,  \ \ ( x, y ) \in \RR^2 \,, \]
for $ \widetilde a = ( n^2 s^2 + m^2 r^2 )  a  $.
\end{lem}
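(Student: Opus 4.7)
The proof I would propose is an entirely linear-algebraic verification. The key observation is that $F$ in \eqref{eq:F} is an orthogonal involution of $\R^2$: the matrix of $F$ in the standard basis is
\[
[F] = \tfrac{1}{L}\begin{pmatrix} -mB & An \\ nA & Bm \end{pmatrix},\qquad L := \sqrt{n^2 A^2 + m^2 B^2},
\]
which is symmetric with determinant $-1$, so $F^{-1} = F$. Consequently, for $u$ periodic with respect to $\Lambda := A\Z \times B\Z$, the pullback $F^* u = u \circ F$ is periodic with respect to the image lattice $F(\Lambda)$: we have $F^*u(x+v) = F^*u(x)$ iff $F(v) \in \Lambda$, i.e.\ iff $v \in F(\Lambda)$. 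Thus the identity \eqref{eq:per} reduces to exhibiting specific lattice vectors in $F(\Lambda)$.

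Natural generators of $F(\Lambda)$ are $v_1 := F(A, 0) = A\,\Xi_0^\perp$ and $v_2 := F(0, B) = B\,\Xi_0$. A short computation then yields, for any $p, q \in \Z$,
\[
p v_1 + q v_2 \;=\; \Bigl( \tfrac{(qn-pm)AB}{L},\ \tfrac{pn A^2 + qm B^2}{L} \Bigr) \;=\; (a,\,-\gamma),
\]
with $a$ and $\gamma$ as in the lemma. Separately, using $\gcd(n,m) = 1$ (the primitivity of the rational direction $\Xi_0$), the specific choice $k_1 = n$, $k_2 = m$ produces $n v_1 + m v_2 = (0, L) = (0, b)$, a pure $y$-period. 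Linearly combining these, $(ka, \ell b - k\gamma) = k(a,-\gamma) + \ell(0, b) \in F(\Lambda)$ for every $k, \ell \in \Z$, and substituting into the periodicity relation $F^*u(x + v) = F^*u(x)$ and relabeling gives \eqref{eq:per}.

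For the rational case $B/A = r/s$ with $\gcd(r,s) = 1$, I would search for an integer combination $k_1 v_1 + k_2 v_2$ whose $y$-component vanishes. The equation $k_1 n A^2 + k_2 m B^2 = 0$ becomes $k_1 n s^2 + k_2 m r^2 = 0$, solved minimally (up to sign) by $(k_1, k_2) = (m r^2,\, -n s^2)$; the resulting $x$-component is $(n^2 s^2 + m^2 r^2)\cdot AB/L$. Invoking Bezout to pick a pair $(p, q)$ with $qn - pm = 1$ makes $a = AB/L$, so this pure $x$-period equals $\widetilde a = (n^2 s^2 + m^2 r^2)\,a$, as claimed. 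The only non-routine aspect of the whole argument is careful bookkeeping of signs and orientations (the choice of $\Xi_0^\perp$ fixes the sign of $\gamma$), together with the small number-theoretic fact that primitivity of $(n,m)$ (and of $(r,s)$) allows one to close the relevant Bezout-type computations; there is no substantive analytic obstacle.
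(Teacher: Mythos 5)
Your linear-algebraic route is sound, and in fact the paper offers no proof of this lemma at all: it is quoted verbatim from \cite[Lemma 2.7]{BZ4}, whose argument is likewise a direct verification using the periodicity of $u$. Your packaging via the reflection $F=F^{-1}$ and the image lattice $F(A\ZZ\times B\ZZ)$ is a clean way to organize that verification (note the involutivity should be justified by orthogonality of $F$ -- its columns are the orthonormal pair $\Xi_0^\perp,\Xi_0$ -- together with symmetry; ``symmetric with determinant $-1$'' alone is not enough). The computations themselves check out: with $v_1=F(A,0)=A\Xi_0^\perp$, $v_2=F(0,B)=B\Xi_0$ and $L=\sqrt{n^2A^2+m^2B^2}$ one gets $pv_1+qv_2=\bigl(a,(pnA^2+qmB^2)/L\bigr)$ and $nv_1+mv_2=(0,b)$; the latter needs no coprimality of $(n,m)$, and in the rational case you do not even need the Bezout normalization $qn-pm=1$, since multiplying your pure $x$-period $\bigl((n^2s^2+m^2r^2)AB/L,0\bigr)$ by the integer $qn-pm$ already yields $(\widetilde a,0)$ for arbitrary $p,q$.

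The one point you must not sweep under ``relabeling'' is the sign. What your computation proves is $F^*u(x+ka,y+\ell b)=F^*u(x,y+k\gamma)$ with the lemma's $\gamma=-(pnA^2+qmB^2)/L$; equivalently, $a\,\Xi_0^\perp-\gamma\,\Xi_0=(pA,qB)\in A\ZZ\times B\ZZ$. The printed identity \eqref{eq:per}, with $y-k\gamma$ on the right, would instead require $a\,\Xi_0^\perp+\gamma\,\Xi_0\in A\ZZ\times B\ZZ$, and this fails already for $A=1$, $B=\sqrt2$, $n=m=1$, $(p,q)=(0,1)$. So the statement as quoted carries a sign error (inherited conventions/typo), which is harmless downstream because $k$ runs over all of $\ZZ$ and only $\gamma k/b$ modulo $1$ enters the Floquet condition; but your write-up should say explicitly that the identity obtained is \eqref{eq:per} with $\gamma$ replaced by $-\gamma$, rather than assert that it ``gives \eqref{eq:per}''. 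Two further small points: the equivalence ``$F^*u(\cdot+v)=F^*u$ iff $v\in F(\Lambda)$'' holds only for generic $u$ (you only need the ``if'' direction), and minimality of the solution $(k_1,k_2)=(mr^2,-ns^2)$ is neither needed nor true in general.
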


We now identify $ u_{n,j} $ with $ F^* u_{n,j} $, and
consider the Schr\"odinger equation on the strip
$ R = \RR_x \times [0,b]_y $ (or the rectangle 
$ R [ 0 , a]_x \times [0,b]_y $ in the case when $ A/B \in \Q $). 
In this coordinate system $ \Xi_0 = ( 0, 1) $.

Choosing a function $\chi\in C^\infty_0 ( \R^2)$ equal to
$1$ near $(0,0)$ we define, for $\epsilon >0$, 
\[ \chi_\epsilon := 
\chi(((\eta, \zeta)- (0,1) )/ {\epsilon} ), \ \  \eta, \zeta \in \RR
, \]
and 
$$ u_{n, j, \epsilon} ( x, y ) = \chi_\epsilon (h_{n,j} D_x)   u_{n,j} \,.  $$ 
We denote by $\mu_{j,\epsilon}$, the semiclassical
measure of the sequence $({u}_{n,j,\epsilon})_{n\in \NN}$ ($j, \epsilon$ are parameters).
Since 
$ \mu_{j,\epsilon} = (\chi_\epsilon  (\zeta))^2\mu_j $, 
(where we skipped the pull-back by $ F $
we have
\begin{equation}\label{eq.local}
\lim_{ \epsilon \to 0 +}  \mu_{j,\epsilon } =  \mu_{j} |_{ \{
   (t,z, \zeta ) \, : \,
\zeta =  (  0 , 1 )  \} }  \end{equation}
We now recall the following normal-form result given in \cite[Proposition
2.3]{BZ4} and \cite[Corollary 2.4]{BZ4}:
\begin{prop}
\label{p:2} 
Suppose that $ F : \RR^2 \rightarrow \RR^2 $ is given by
\eqref{eq:F}  and that $ V \in \CI ( \RR^2 ) $ is periodic
with respect to $ A \ZZ \times B \ZZ $. Let $ a , b $ and $                                      
\gamma $ be as in \eqref{eq:per}.

Let $\chi \in \CIc ( \mathbb{R}^2) $ be equal to $ 0 $ in a
neighbourhood of $ \eta = 0 $. Suppose 
that $ V_j ( x, y ) \in \CI ( \TT^1 \times \TT^1 ) $. Then there
exist operators
\[   Q_j( x, y , hD_y ) \in \CI ( \RR ) \otimes \Psi^{0} ( \RR )  \,, \ \ R_j ( x, y ,
hD_x, hD_y  )
\in \Psi^{0 } ( \RR^2 )  \,, \]
such that 
$  (F^{-1})^*  Q F^* $ and $  (F^{-1})^*  R F^* $ preserve
$ A \ZZ \times B \ZZ $ periodicity, and
\begin{equation}
\label{eq.normale}
\begin{split}
& ({\Id} + h Q_j) \left(D^2_y + F^* V_j ( x, y ) \right)\chi( hD_x,  hD_y)\\
& \ \ \ \ \ \ \ \ = (D_y^2 + W_j ( x)) ({\Id} + hQ_j) \chi( hD_x,
hD_y) + hR_j  
  \,, \end{split}
\end{equation}
where 
$  W_j ( x) = \frac 1 { b} \int_0^b F^* V_j ( x , y ) dy
$ satisfies $ W_j ( x + a ) = W_j ( x ) $.

Moreover, there exist operators $ P_j =  P_j   ( x, y, h D_x, h D_y ) \in \Psi^0 (
\RR^2 ) $ such that (with properties as above)
\begin{align}
& (\Id +h Q_j) \left(  D_x ^2 + D^2_y + F^* V_j ( x, y ) \right)\chi( hD_x, hD_y)\label{eq:22} \\
& \quad = \Bigl(\bigl(D_x^2 + D_y^2 + W_j ( x)\bigr) ( \Id+ hQ_j)
+P_j \Bigr) \chi(h D_x,  hD_y)  + h R_j ,\notag
\end{align}
\begin{equation}\label{eq.W}
P_j (x,y, x, \eta) = \frac 2  i \xi \partial_x q_j (x, y, \eta) \widetilde
\chi _\epsilon ( \xi, \eta), \ \ \ q_j = \sigma ( Q_j ) , 
\end{equation}
where $ \widetilde \chi \in \CIc ( \RR^2 ) $ is equal to one on the
support of $ \chi $.
\end{prop}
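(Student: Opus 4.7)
\medskip
\noindent
\emph{Proof plan.} The plan is to follow a standard semiclassical normal-form argument: choose $Q_j$ so that its commutator with $D_y^2$ cancels the non-averaged part $F^*V_j - W_j(x)$ of the potential at leading order in $h$. The cutoff $\chi$, which is supported away from $\eta = 0$, is precisely what makes such an inversion possible, since solving for $Q_j$ requires dividing by $\eta$.

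Concretely, I would prescribe the Weyl symbol of $Q_j$ to be
\begin{equation*}
q_j(x,y,\eta) := \frac{i}{2\eta}\,\widetilde\chi_1(\eta)\int_0^y \bigl(F^*V_j(x,y') - W_j(x)\bigr)\,dy',
\end{equation*}
where $\widetilde\chi_1 \in \CIc(\RR)$ equals $1$ on the $\eta$-projection of $\supp\chi$ and vanishes near $\eta = 0$. Since $W_j(x)$ is the $y$-mean of $F^*V_j(x,\cdot)$ over $[0,b]$, the integrand has zero mean, so the integral is $b$-periodic in $y$ and $q_j$ is a bona fide symbol. Expanding
\begin{equation*}
(\Id + hQ_j)(D_y^2 + F^*V_j) - (D_y^2 + W_j)(\Id + hQ_j) = (F^*V_j - W_j) + h[Q_j, D_y^2] + h\bigl(Q_j F^*V_j - W_j Q_j\bigr),
\end{equation*}
and computing the principal Weyl symbol of $h[Q_j, D_y^2]$ as $2i\eta\,\partial_y q_j + \Oo(h)$, the defining relation $\partial_y q_j = i(F^*V_j - W_j)/(2\eta)$ cancels the $O(1)$ term $F^*V_j - W_j$ after composition with $\chi(hD_x,hD_y)$. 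What remains is a bounded operator $hR_j \in h\Psi^0(\RR^2)$ built from the subprincipal part of the commutator and the last bounded piece, yielding \eqref{eq.normale}.

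For \eqref{eq:22} I would repeat the same manipulation with the full Laplacian: the extra term $h[Q_j, D_x^2]\chi(hD_x,hD_y)$ is handled by a direct Weyl-calculus computation showing that its principal symbol is of the form claimed in \eqref{eq.W}, giving the operator $P_j$ with symbol $\tfrac{2}{i}\xi\,\partial_x q_j(x,y,\eta)\,\widetilde\chi(\xi,\eta)$; subprincipal contributions are absorbed into $hR_j$. The cutoff $\widetilde\chi$ is chosen so that $\widetilde\chi \equiv 1$ on $\supp\chi$, hence $\widetilde\chi\cdot\chi = \chi$ and nothing is lost after insertion.

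The delicate point is not the algebra but verifying that $(F^{-1})^*Q_jF^*$ and $(F^{-1})^*R_jF^*$ respect the $A\ZZ \times B\ZZ$ periodicity. This is governed by Lemma \ref{l:si}: the symbol $q_j$ is built entirely from $F^*V_j$ and its $y$-average $W_j$, both of which inherit the twisted periodicity property \eqref{eq:per}. A direct check shows that this property propagates through the symbolic integration in $y$ and through the PDO calculus producing $R_j$, so that pulling back by $F$ yields genuinely $(A\ZZ \times B\ZZ)$-periodic operators. Most of the bookkeeping in the argument will be devoted to this compatibility with the lattice.
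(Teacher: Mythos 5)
You should first note that this paper does not actually prove Proposition~\ref{p:2}: it is quoted from the earlier paper \cite[Proposition 2.3, Corollary 2.4]{BZ4}, and the proof there is exactly the averaging argument you propose — choose $q_j$ so that the cohomological equation $2i\eta\,\partial_y q_j=-(F^*V_j-W_j)$ holds on $\supp\chi$, the zero $y$-mean of $F^*V_j-W_j$ making the periodic primitive available and the cutoff away from $\eta=0$ making the division by $\eta$ legitimate; the scaling point you use, namely that $h[Q_j,D_y^2]=h^{-1}[Q_j,(hD_y)^2]$ is of size one with principal symbol $2i\eta\,\partial_y q_j$, and the identification of $P_j$ from $h[Q_j,D_x^2]$ in \eqref{eq:22}--\eqref{eq.W}, are also as in the original argument. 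So in substance your route is the same as the source's.

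There is, however, one concrete defect in the step you yourself single out as delicate. With the basepoint-$0$ primitive $q_j(x,y,\eta)=\frac{i}{2\eta}\widetilde\chi_1(\eta)\int_0^y\bigl(F^*V_j(x,y')-W_j(x)\bigr)dy'$, the twisted periodicity \eqref{eq:per} fails: writing $G=F^*V_j-W_j$, Lemma~\ref{l:si} gives $G(x+a,y)=G(x,y-\gamma)$, hence
\begin{equation*}
\int_0^y G(x+a,y')\,dy'=\int_{-\gamma}^{y-\gamma}G(x,s)\,ds=\int_0^{y-\gamma}G(x,s)\,ds+\int_{-\gamma}^{0}G(x,s)\,ds ,
\end{equation*}
so $q_j(x+a,y,\eta)=q_j(x,y-\gamma,\eta)+c(x,\eta)$ with $c(x,\eta)=\frac{i}{2\eta}\widetilde\chi_1(\eta)\int_{-\gamma}^0 G(x,s)\,ds$ generically nonzero; consequently $(F^{-1})^*Q_jF^*$ does not preserve $A\ZZ\times B\ZZ$ periodicity as written, and a ``direct check'' would reveal this rather than confirm it. The fix is standard and small: take instead the unique $b$-periodic primitive with zero mean in $y$ (equivalently, define $q_j$ through the Fourier series of $G$ in $y$ with the zero mode removed, dividing by the frequencies). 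Since that operation commutes with translations in $y$, the relation $q_j(x+a,y,\eta)=q_j(x,y-\gamma,\eta)$ then holds at the symbol level, and it does propagate to $Q_j$, $P_j$, $R_j$ and through the symbolic calculus exactly as you assert. With this correction your argument is complete and coincides with the proof in \cite{BZ4}.
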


Using Proposition \ref{p:2} we define
$$ v_{n,j,\epsilon}  = \Bigl(1+hQ_j \Bigr)  u_{n, j , \epsilon}  \,,
\ \ h = h_{ n,j} . $$
Since the operator $Q_j$ is bounded on $L^2$,  
the semiclassical defect measures associated to $v_{n, j, \epsilon}$ and
${u}_{n,j,\epsilon}$ are equal.
We now consider the time dependent Schr\"odinger equation satisfied 
by $ v_{n,j,\epsilon} $. With 
\begin{equation}\begin{gathered}
 Q_{n,j} := Q_j ( x, y , h_{n,j} D_y ) \,, \ \ R_{n,j} := R ( x, y , h_{n,j} D_x, h_{n,j}D_y ) \,, \\
 P_{n,j} := P_j ( x, y , h_{n,j} D_x , h_{n,j}D_y ) \,, 
 \end{gathered}
 \end{equation}
given in Proposition~\ref{p:2} and $ \chi_{n,j, \epsilon} := \chi ( h_{n,j} D_z ) $, we have 
\begin{equation}\label{eq:v}
\begin{split}
 (i \partial_t + \Delta -W_j(x) ) v_{n,j} & = ({\Id} + h_{n,j} Q_{n,j} ) (i \partial_t +
 \Delta - V_j(x,y) ) \chi_{n,j,\epsilon}  u_{n,j}  \\
 & \qquad -P_{n,j}   \chi_{n,j,\epsilon}    {u}_{n,j} - h_{n,j} R_{n,j, \epsilon}u_{n,j}  
 \\
&  = -P_{n,j} \chi_{n,j,\epsilon}  {u}_{n,j} + [V, \chi_{n,j,\epsilon}
] u_{n,j} + o  _{L^2} (1) \\
&  = -P_{n,j} \chi_{n,j,\epsilon}  {u}_{n,j}+o _{L^2_{x,y}} (1) 
\end{split} 
\end{equation}
We also recall that according to~\eqref{eq.W}, on the support of $\mu_{j,\epsilon}$,   the symbol of the operator $W$ is smaller than $C\epsilon$. 
 This implies that
 \begin{equation}\label{eq:vbis}
 (i \partial_t + \Delta -W_j(x) ) v_{n,j,\epsilon} = f_{n,j, \epsilon}
 \end{equation}
 with 
\begin{equation}\label{eq.loc}
\limsup_{n\rightarrow + \infty} \| f_{n,j,\epsilon}\|^2_{L^2 ( [0,T]\times \T^2)}= \langle \mu_{j, \epsilon}, |P_{n,j}|^2\rangle  \leq C_j \epsilon^2
\end{equation}
 
The following simple observation
$$ e^{it( \partial_y^2 + \partial_x^2- W_j (x))} = e^{it \partial_y^2 } e^{it( \partial_x^2- W_j (x))} .$$
shows that we can write
$$ v_{n,j, \epsilon } ( t, x , y ) = \sum_{ k \in \ZZ } e^{-i(tk^2 + ky)}
v_{n,j,\epsilon, k} (t,x), \qquad f_{n,j, \epsilon}  ( t , x , y ) = \sum_{ k \in \ZZ
}  e^{-i ky} f_{n,j,\epsilon, k} (t,x) \,, $$
where the coefficients satisfy a Floquet condition (see \cite[Proof of
Proposition 2.2]{BZ4})
\[ \begin{split} &  v_{n,j,\epsilon,k} ( t, x + a ) = e^{ 2\pi i \gamma k/b }
v_{n,j,\epsilon,k} ( t , x) = e^{ 2\pi i \gamma_k } v_{n,j,\epsilon,k}
( t, x ) , \\ 
& f_{n,j,\epsilon,k} ( t, x + a ) = e^{ 2\pi i \gamma_k }
f_{n,j,\epsilon,k} ( t, x ) , \ \ \gamma_k := \gamma k /b = [\gamma k
/b ] \in [0, 1 ) . \end{split} \]




Since $ W_j ( x + a ) = W_j ( x ) $ and 
\[ \begin{split}   \| W - W_j \|_{ L^2 ( [ 0 , a ]_x)  }^2 & = \int_0^a\left( \frac 1 b
  \int_0^b \int ( F^*V ( x, y ) - F^*V_j ( x , y ) ) dy \right)^2 dx
\\
& \leq \| F^* ( V - V_j ) \|_{ L^2 ([ 0 , a ]_x \times [ 0 , b]_y )
}^2 \\
&  \leq C_{\Xi_0} \| V - V_j \|_{L^2 ( \T^2 ) } \longrightarrow 0 ,  \
\ j \longrightarrow \infty, 
\end{split} \]
we can 
apply the one dimensional Proposition~\ref{cor.obs}. 
For that we fix a domain $\omega \subset [0, a]_x $ such that for any $x\in
\overline{\omega}$, the line  $\{x\} \times [0, b]_y $,  encounters $
\Omega$.  The estiaate \eqref{eq:ob1d} 
gives the
following non-geometric estimate; it is here where the depenence on
the norm of the potential enters:
\[ \begin{split} 
 \| v_{n,j,\epsilon, k}\|^2_{L^\infty([0,T]; L^2( [0,a]_x)) } & \leq 2\|v_{n,j,\epsilon, k}\mid_{t=0} \|_{ L^2( [0,a]_x) } + 2\|  f_{n,j,\epsilon, k} \|^2_{L^1([0,T]; L^2([0,a]_x))} \\
& \leq    K_0 \int_0 ^T \| e^{it( \partial_x^2- W_j (x))} v_{n,j,\epsilon, k}\mid_{t=0}\| ^2_{L^2(\omega)} + C \|  f_{n,j,\epsilon, k} \|^2_{L^2([0,T]\times [0,a]_x)}\\
& \leq    K_0 \int_0 ^T \|  v_{n,j,\epsilon, k}\| ^2_{L^2(\omega)} + C
\|  f_{n,j,\epsilon, k} \|^2_{L^2([0,T]\times [0,a]_x) }. 
\end{split}
\] 
Summing over $k\in \Z$ gives
$$\| v_{n,j,\epsilon}\|^2_{L^\infty([0,T]; L^2( [0, a ] \times [0, b]_y) } \leq K_0 \int_0 ^T \|  v_{n,j,\epsilon}\mid_{t=0}\| ^2_{L^2(\omega)} + C \|  f_{n,j,\epsilon} \|^2_{L^2([0,T]\times [0,a]_x)} 
$$
Taking first the limit $n\rightarrow + \infty$, we obtain, according to~\eqref{eq.loc} 
\[ \mu_{j, \epsilon} (( 0,T) \times ([0,a ] \times [0,b]_y)  \times \R^2)\leq K_0 \mu_{j,\epsilon}( ( 0,T) \times \omega \times [0,b]_y \times \R^2 ) + C _{j} \epsilon .
\]
Then taking the limit $\epsilon \rightarrow 0$, we conclude that,
according to~\eqref{eq.local}, 
\begin{equation}
\label{eq:limep} \mu_j ( (0,T) \times ([0,a]_x\times[0,b]_y) \times \{ (0,1)\}\leq K_0  \mu_j ( (0,T) \times {\omega} \times [0,b]_y \times \{ (0,1)\})
\end{equation}
Since vertical line over $\overline \omega$ encounters the open set $\Omega$,  we
have 
\[ \min_{ x \in \overline \omega } \int_{ \Omega \cap (\{ x \} \times    [0,b]_y } dy > \delta_0 > 0 . \]
This and the invariance of the measure under the flow (which now is
just the translation in the $ y $ direction) imply that
$$\mu_j ( (0,T) \times {\omega} \times [0,b]_y \times \{ (0,1)\})\leq
 \delta_0 \mu_j ( (0,T) \times \Omega \times \{ (0,1)\}).
$$
Combining this with \eqref{eq:limep}  we obtain that there exists a
constant $K_{(0,1)}$, independent of $j$, such that 
$$\mu_j ( (0,T) \times ([0,a]_x\times[0,b]_y) \times \{ (0,1)\})\leq K_{(0,1)} \mu_j (
(0,T) \times \Omega \times \{ (0,1)\}) .$$
Returning to an arbitrary rational direction, 
$$ \zeta_{p,q}= \frac{ (p,q)} {\sqrt{A p^2 + B q^2}},  \ \ \sqrt{ p^2 + q^2 } \leq N,$$
 we obtain that there exists a constant $K_{p,q}$ such that 
\begin{equation}\label{eq.contrainte3}
\mu_j ( (0,T) \times \T^2 \times \zeta_{p,q}) \leq K_{p,q} \mu_j ( (0,T) \times \Omega \times \Xi_{p,q})
\end{equation}

\subsection{Conclusion of the proof of Proposition \ref{th:3}}
If the constant $ K $ in the statement of the proposition is chosen so
that, with $ \delta $ in \eqref{eq.contrainte2},
\[ \frac K T > \max \left( \frac{{\vol} ( \T^2)}{ { \vol} ( \Omega)},
  \frac 1 \delta, \max_{ \sqrt {p^2 + q^2} \leq N }  K_{p,q} \right) , 
\]
then, according to~\eqref{eq.contrainte1},  \eqref{eq.contrainte2}
and~\eqref{eq.contrainte1}, we must have 
$$ \mu(( 0,T) \times \T^2 \times \R^2) < T,
$$ which contradicts ~\eqref{eq.nonvan} and completes the proof of
Proposition \ref{th:3}.

\section{From smooth to rough potentials}
\label{frsm}

Proposition \ref{th:3} was proved under the assumptions that $ V_j \in
\CI ( \T^2 ) $ converge to $ V \in L^2 ( \T^2 )$. To pass to $ L^2 $
potentials
we will now use the  results on \S \ref{cd2}.

\subsection{Classical observation estimate for smooth potentials}
The first proposition is the analogue of \cite[Proposition 4.1]{BZ4}
but with constants described by Proposition \ref{th:3}.

\begin{prop}\label{th.1}
Suppose that $ V_j \in \CI ( \T^2 ; \RR ) $ converge to  $ V $ in the $  L^2 (
\T^2 ) $ topology. 
 Then for any non-empty open subset $ \Omega $ of $ \T^2 $ and $T>0$, there exists $C>0$
such that for any $j\in \NN$ there exists $C_j$ such that for any
$u_0\in L^2( \T^2)$,  we have
\begin{equation}
\label{eq:th.1} \|u_0 \|_{L^2 ( \T^2)} \leq C \| e^{it ( \Delta - V_j
  )} u_0 \|_{L^2([0,T] \times \Omega)} + C_j\|u_0 \|_{H^{-1} ( \T^2)} , 
\end{equation}
\end{prop}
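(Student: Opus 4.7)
The plan is to pass from the semiclassical observability of Proposition~\ref{th:3}, which holds with a constant $K$ uniform in $j$ on data spectrally localized by $\Pi_{h,\rho_j,j}$, to the classical estimate \eqref{eq:th.1} via a Littlewood--Paley decomposition in the spectrum of $P_j := -\Delta + V_j$. First, I would split $u_0 = u_0^{\mathrm{low}} + u_0^{\mathrm{high}}$, where $u_0^{\mathrm{low}} := \mathbf{1}_{[-\infty,\Lambda_j]}(P_j)\,u_0$ for $\Lambda_j$ so large that $\Lambda_j^{-1/2} \ll h_{0,j}$. Since $V_j \in C^\infty$, the low-frequency subspace is finite-dimensional and the basis of eigenfunctions $\{\phi_n^j\}_{\lambda_n^j \leq \Lambda_j}$ lies in $H^1$, so $\|u_0^{\mathrm{low}}\|_{L^2} \leq C_j \|u_0\|_{H^{-1}}$, and this piece is harmlessly absorbed into the error term of \eqref{eq:th.1}.

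For the high-frequency part, pick a smooth dyadic partition of unity $\sum_{k \geq 0} \chi_k^2 = \mathbf{1}_{[\Lambda_j,\infty)}$ with $\chi_k(\tau) = \chi(h_k^2\tau)$, $h_k = 2^{-k/2} h_{0,j}$, and $\chi$ supported in a neighborhood of $1$ of width compatible with $\rho_j$; then $\Pi_k^j := \chi_k(P_j)$ fits the hypothesis of Proposition~\ref{th:3}, giving, with $K$ uniform in $j$ and $k$,
\[
\|\Pi_k^j u_0\|_{L^2}^2 \leq K \int_0^T \|e^{-itP_j} \Pi_k^j u_0\|_{L^2(\Omega)}^2\, dt.
\]
Summing on $k$ reconstructs $\|u_0^{\mathrm{high}}\|_{L^2}^2$ on the left via $\sum_k (\Pi_k^j)^2 = \mathbf{1}_{[\Lambda_j,\infty)}(P_j)$. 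The key step is to bound the summed right-hand side by $C \int_0^T \|e^{-itP_j} u_0\|_{L^2(\Omega)}^2\, dt + C_j\|u_0\|_{H^{-1}}^2$. Shrinking $\Omega$ to $\Omega_1 \Subset \Omega$, fixing $\psi \in C_c^\infty(\Omega)$ with $\psi \equiv 1$ on $\Omega_1$, and using that $e^{-itP_j}$ commutes with each $\Pi_k^j$, one rewrites
\[
\sum_k \|\psi\,e^{-itP_j}\Pi_k^j u_0\|_{L^2}^2 = \Bigl\langle \Bigl(\sum_k \Pi_k^j \psi^2 \Pi_k^j\Bigr) e^{-itP_j}u_0,\; e^{-itP_j}u_0\Bigr\rangle,
\]
and the Helffer--Sj\"ostrand calculus yields $\sum_k \Pi_k^j \psi^2 \Pi_k^j = \psi^2\,\mathbf{1}_{[\Lambda_j,\infty)}(P_j) + R_j$, with $R_j$ a remainder built from the commutators $[\psi^2,\Pi_k^j] = O_{L^2 \to L^2}(h_k)$.

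The main obstacle is extracting from $R_j$ a genuine gain in regularity, so that the quadratic form $\langle R_j v, v\rangle$ is bounded by $C_j \|v\|_{L^2} \|v\|_{H^{-1}}$ and can then be absorbed by Young's inequality into a small multiple of the left-hand side plus $C_j\|u_0\|_{H^{-1}}^2$. The point is that each summand $\Pi_k^j[\psi^2,\Pi_k^j]$ is spectrally localized near $\tau \sim h_k^{-2}$, where $\|v\|_{L^2}$ and $h_k^{-1}\|v\|_{H^{-1}}$ are comparable, so the $O(h_k)$ operator bound becomes an effective $O(1)$ gain in the $H^{-1}$ direction. The smoothness of $V_j$ is needed for this pseudodifferential calculus at scales $h_k < h_{0,j}$ and is what produces the $j$-dependent constant $C_j$, whereas the uniform constant $C$ in \eqref{eq:th.1} is inherited directly from the uniform $K$ of Proposition~\ref{th:3}.
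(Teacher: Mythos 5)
Your proposal is correct in substance, but it handles the key technical step by a genuinely different mechanism than the paper. Both arguments share the same skeleton: a dyadic decomposition in the spectrum of $P_j=-\Delta+V_j$ compatible with the window $\rho_j$ of Proposition~\ref{th:3}, a blockwise application of the uniform estimate \eqref{eq:semi}, and a low-frequency remainder dumped into $C_j\|u_0\|_{H^{-1}}$. The difference lies in how $\sum_k\int_0^T\|e^{-itP_j}\Pi_k^j u_0\|^2_{L^2(\Omega)}\,dt$ is recombined into $\int_0^T\|e^{-itP_j}u_0\|^2_{L^2(\Omega)}\,dt$. The paper uses the equation to replace the spectral cutoffs $\varphi_{k,j}(P_{V_j})$ by the time Fourier multipliers $\varphi_{k,j}(D_t)$, which commute \emph{exactly} with the spatial restriction to $\Omega$; the only commutators then needed are between a time cutoff $\psi(t)$ and $\varphi_{k,j}(D_t)$, i.e.\ one-dimensional semiclassical calculus in $t$ alone, with $O(R_j^{-Nk})$ errors absorbed into the negative Sobolev term. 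You instead keep the localization in the spatial operator and control $[\psi^2,\chi(h_k^2P_j)]=O_{L^2\to L^2}(h_k)$ via Helffer--Sj\"ostrand calculus, accepting $j$-dependent constants and compensating by pushing the low/high threshold $\Lambda_j$ up, $j$-dependently. This does work: a weighted Cauchy--Schwarz in $k$ bounds the total commutator contribution by $C_j\bigl(\sum_k h_k^2\bigr)^{1/2}\|e^{-itP_j}u_0\|^2_{L^2}\leq \epsilon\,\|u_0\|^2_{L^2}$ once $\Lambda_j$ is chosen after $C_j$, and this is absorbed into the left-hand side while the leading constant is inherited from the uniform $K$, exactly as you assert. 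Two small points to tighten: your claim $\langle R_j v,v\rangle\leq C_j\|v\|_{L^2}\|v\|_{H^{-1}}$ requires this summation-in-$k$ bookkeeping (blockwise the gain is $h_k$, and the blocks must be summed with their weights), and if you use the $H^{-1}$ form literally you also need $e^{-itP_j}$ bounded on $H^{-1}$ with a $j$-dependent constant — both harmless since only the $H^{-1}$ constant may depend on $j$. In terms of trade-offs: the paper's $D_t$-transfer avoids any functional calculus for $-\Delta+V_j$ beyond the spectral theorem, at the price of introducing the time cutoff and its commutators; your route dispenses with the time cutoff entirely but needs the two-dimensional semiclassical calculus for $P_j$ and a $j$-dependent choice of the splitting point.
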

\begin{proof}
To obtain the estimate \eqref{eq:th.1}  from Proposition~\ref{th:3}, we apply pseudodifferential calculus in the time variable. This was already performed in~\cite{BZ4}, but since we need a precise dependence on the constants we recall the argument.
Consider a $j$-dependent partition of unity
\begin{gather*}  1= \varphi_{0,j} (r)^2 + \sum_{k=1}^{\infty}
\varphi_{k,j} ( r )^2, \quad  \varphi_{k,j} ( r ) :=   \varphi(R_j ^{-k} |r| ), \enspace R > 1, \\
\varphi \in \CIc ( ( R_j^{-1}, R_j ) ; [0, 1]  ), \quad  (
R_j^{-1}, R_j) \subset \{ r \; : \;  \chi( r / \rho_j)  \geq
\textstyle{\frac12} \},
\end{gather*}
where $ \chi $  and $ \rho_j $ come from
Proposition~\ref{th:3}. 
Then,  we decompose $u_0$
dyadically:
\[
\|u_0 \|^2_{L^2} = \sum_{k=0}^\infty  \|\varphi_{k,j}( P_{V_j} ) u_0
\|_{L^2}^2. \quad  P_{V_j} := - \Delta + V_j.
\]
Let $\psi \in \CIc ( ( 0, T ) ; [ 0, 1 ] ) $ satisfy  $\psi ( t ) > 1/2$,
on $ T/3 < t < 2T / 3 $.
We first observe (using the time translation invariance of Schr\"odinger equation) that in
Proposition~\ref{th:3} we have actually proved  that
\begin{equation}\label{eq:piece}
\|\Pi_{h, \rho_j,j} u_0\|_{L^2}^2 \leq K \int_\RR \psi( t )^2 \| \mathrm{e}^{-\mathrm{i} t ( - \Delta + V_j )} \Pi_{h, \rho_j,j} u_0\|_{L^2 ( \Omega ) }^2 dt, \quad 0 < h < h_0,
\end{equation}
which is the version we will use.

Taking $K_j$ large enough so that $R^{-K_j} \leq h_{0,j}$, where $
h_0 $ is as in Proposition~\ref{th:3}, we apply
\eqref{eq:piece} to the dyadic pieces:
\begin{align*}
\|u_0 \|^2_{L^2} & =\sum_{ k \in \ZZ }  \|\varphi_{k,j}( P_{V_j})u_0\|_{L^2}^2 \\
&\leq \sum_{k=0}^{K_j}  \|\varphi_{k,j}( P_{V_j} )u_0\|_{L^2}^2 +C
\sum_{k=K_j+1}^\infty  \int_{0}^T \psi ( t ) ^2 \|  \varphi_{k,j}(  P_{V_j})\,
\mathrm{e}^{-\mathrm{i}t P_{V_j} } u_0\|_{L^2( \Omega) } ^2 dt \\
& = \sum_{k=0}^{K_j}  \|\varphi_{k,j}( P_{V_j} )u_0\|_{L^2}^2 + C
\sum_{k=K_j+1}^\infty  \int_\RR \| \psi ( t ) \varphi_{k,j}(  P_{V_j})
\,\mathrm{e}^{-\mathrm{i}t P_{V_j} } u_0\|_{L^2( \Omega) } ^2 dt.
\end{align*}
Using the equation we can replace $ \varphi ( P_{V_j} ) $ by $
\varphi ( D_t ), $ which meant that we did not change the
domain of $ z $ integration. We need to consider the
commutator of $  \psi \in \CIc ( ( 0,  T) ) $ and $
\varphi_{k,j} ( D_t )\,{=}\,\varphi ( R^{-j}  D_t ) $. If  $
\widetilde \psi \in \CIc ((0,T))$  is equal to $ 1 $ on $
\supp  \psi$ then the semiclassical pseudo-differential
calculus with $ h = R_j^{-k}$ (see for instance
\cite[Chapter~4]{EZB}) gives
\begin{equation}\label{eq:wideps}
\psi ( t )  \varphi_{k,j} ( D_t )  =    \psi ( t )  \varphi_{k,j}
(D_t )   \widetilde  \psi ( t ) +  E_j ( t, D_t),\quad
\partial^\alpha E_j  = {\mathcal O}   ( \langle t \rangle^{-N } \langle
\tau \rangle^{-N}
R_j^{- N k } ),
\end{equation}
for all $ N$ and uniformly in $ k $.

The errors obtained from $ E_k $ can be absorbed into the $
\| u_0 \|_{ H^{-2}  ( \TT^2 ) } $ term on the right-hand
side (with a constant depending on $j$). Hence we obtain
\begin{align*}
\|u_0 \|^2_{L^2} &\leq C_j\|u_0 \|_{H^{-2} ( \T^2)} ^2+ C \sum_{k=0}^\infty \int_{0}^T
 \|   \psi (  t)  \varphi_{k,j}(  D_t  )\, \mathrm{e}^{- \mathrm{i}t P_{V_j} }u_0 \|^2_{L^2( \Omega)} dt\\
& \leq\widetilde C_j\|u_0 \|_{H^{-2} ( \T^2)} ^2+ K   \sum_{k=0}^\infty
\langle   \varphi_{k,j}(  D_t  )^2  \widetilde \psi ( t ) \,\mathrm{e}^{-\mathrm{i}t P_{V_j} }u_0,  \widetilde \psi( t )\,
\mathrm{e}^{-\mathrm{i}t P_{V_j}}u_0,  \rangle_{L^2 ( \RR_t \times \Omega)} \\
& = \widetilde  C_j\|u_0 \|_{H^{-2} ( \T^2)} ^2 + K \int_\RR  \| \widetilde \psi ( t ) \,\mathrm{e}^{-\mathrm{i}t P_V } u_0\|^2_{L^2( \Omega)} dt  \\
& \leq \widetilde C_j\|u_0 \|_{H^{-2} ( \T^2)} ^2 + K \int_{0}^T  \|\, \mathrm{e}^{-\mathrm{i}t P_V } u_0\|^2_{L^2( \Omega)} dt,
\end{align*}
where the last inequality is the statement of the
proposition.
\end{proof}

\subsection{Proof of Theorem \ref{th.2}.} 
We can now deduce Theorem \ref{th.2} from Proposition \ref{th.1}.
For that  we consider a sequence 
     $V_j$ of smooth potentials  converging to $V$ in $L^2( \T^2)$ (to construct such sequence, consider the Littlewood-Paley cut-off $V_j= \chi( 2^{-2j} \Delta) V$,$ \chi \in C^\infty _0 ( \R)$ equal to $1$ near $0$). We now have according to Proposition~\ref{th.1}
  $$ \|u_0 \|_{L^2 ( \T^2)} \leq C \| e^{it ( \Delta - V_j)} u_0 \|_{L^2([0,T] \times \Omega)} + D_j\|u_0 \|_{H^{-2} ( \T^2)}.
$$ 
On the other hand, according to~\eqref{eq.diff}, we have 
$$\| e^{it ( \Delta - V_j)} u_0 \|_{L^2([0,T] \times \Omega)}\leq  \| e^{it ( \Delta - V)} u_0 \|_{L^2([0,T] \times \Omega)}+C\| V-V_j\|_{L^2} \| u_0 \|_{L^2(\T^2_x)},
$$
and consequently, we deduce 
$$ \|u_0 \|_{L^2 ( \T^2)} \leq C \| e^{it ( \Delta - V)} u_0 \|_{L^2([0,T] \times \Omega)} + C\| V-V_j\|_{L^2}\|u_0\|_{L^2( \T^2_x)} +  D_j\|u_0 \|_{H^{-1} ( \T^2)},
$$ and consequently, taking $j$ large enough so that $C \|
V-V_j\|_{L^2}\leq \frac 1 2$, we conclude that 
$$ \|u_0 \|_{L^2 ( \T^2)} \leq 2C \| e^{it ( \Delta - V)} u_0 \|_{L^2([0,T] \times \Omega)} + 2  D_j\|u_0 \|_{H^{-1} ( \T^2)}.
$$
It remains to eliminate the last term in the right-hand side of this
inequality. For this we use again 
classical uniqueness-compactness argument of
Bardos-Lebeau-Rauch~\cite{BLR} (see also \cite[\S4]{BZ4}) or the direct argument presented in the
Appendix. The needed unique continuation results for $ L^2 $
potentials in $ \RR^2 $ follows, as it did in \S \ref{cd1}  from the
results of \cite{SS}.

\newpage

\appendix
\section{A quantitative version of the uniqueness-compactness
  argument}

We present an abstract result which eliminates the low-frequency
contributions in observability estimates. 

Let $ P $ be an unbounded self-adjoint operator on a Hilbert spaces $
\cH $. We assume that the spectrum of $ P $ is discrete:
\[  P \varphi_n = \lambda_n \varphi_n , \ \ \lambda_1
\leq \lambda_2 \leq \cdots , \ \ \ \lambda_n \geq n^\delta/C_0 , \ \
\delta > 0 , \]
where $ \{ \varphi \}_{n=1}^\infty $ form an orthonormal basis of $ \cH
$.  

We define $ P$-based Sobolev spaces using the norms
\begin{equation}
\label{eq:defHP}   \| \varphi \|_{ \cH^s_P }^2 := \sum_{n=1}^\infty \langle \lambda_n
\rangle^{2s} | \langle \varphi , \varphi_n \rangle  |^2. \end{equation}
The Schr\"odinger group for $ P $ is the following unitary operator
on $ \cH$:
\[  U ( t ) \varphi = \exp ( - i t P ) \varphi = 
\sum_{n=1}^\infty \langle \varphi , \varphi_n \rangle 
e^{ - i t \lambda_n } \varphi_n . \]

We have the following general result:

\begin{thm}
\label{t:A}
Suppose that $ A : \cH \to \cH $ is a bounded operator with the
property  that for
any $ \lambda \in \RR $ there exists a constant $ C ( \lambda ) $ such 
that for $ \varphi \in \cH_P^2 $
\begin{equation}
\label{eq:AA}     \| \varphi \|_{ \cH} \leq C( \lambda ) \left( \| ( P - \lambda ) \varphi \|_{
  \cH} + \| A \varphi \|_{ \cH } \right). 
\end{equation}
Suppose also that for some $ \epsilon > 0 $, $ T > 0$, $ C_1$ and $
C_2 $, 
\begin{equation}
\label{eq:AA1}     \| \varphi \|_{ \cH}^2 \leq C_1 \int_0^t \| A U ( s ) \varphi \|_{ \cH }^2
ds + C_2 \| \varphi \|_{\cH^{-\epsilon}_P }^2 ,  \ \ \frac T 4  \leq t
\leq T . 
\end{equation}
Then there exist explicitely computable constant $ K $
such that
\begin{equation}
\label{eq:AA2}     \| \varphi \|_{ \cH}^2 \leq K \int_0^T \| A U ( t ) \varphi \|_{ \cH }^2
dt  .
\end{equation}
\end{thm}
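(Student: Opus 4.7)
I will argue by a compactness/contradiction scheme (in the spirit of Bardos--Lebeau--Rauch), keeping track of constants so that the final $K$ is effectively computable. Assume \eqref{eq:AA2} fails; then there is a sequence $\varphi_n \in \cH$ with $\|\varphi_n\|_\cH = 1$ and $I_n := \int_0^T\|AU(t)\varphi_n\|^2\,dt\to 0$. Substituting $t=T$ into \eqref{eq:AA1} yields $\|\varphi_n\|_{\cH_P^{-\epsilon}}^2 \geq 1/(2C_2)$ for $n$ large. Since $\lambda_n \geq n^\delta/C_0$, the embedding $\cH \hookrightarrow \cH_P^{-\epsilon}$ is compact, so a subsequence converges weakly in $\cH$ and strongly in $\cH_P^{-\epsilon}$ to some $\varphi_\infty \neq 0$. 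The bounded linear map $\varphi \mapsto AU(\cdot)\varphi$, $\cH \to L^2([0,T];\cH)$, is weakly continuous, so weak lower semi-continuity of the norm forces $AU(\cdot)\varphi_\infty \equiv 0$ on $[0,T]$. Hence $\varphi_\infty$ lies in $\cN := \{\varphi \in \cH : AU(t)\varphi = 0 \text{ for all } t \in [0,T]\}$, on which \eqref{eq:AA1} degenerates to $\|\varphi\|_\cH^2 \leq C_2\|\varphi\|_{\cH_P^{-\epsilon}}^2$, making the two norms equivalent; combined with compactness of the embedding this forces $\dim\cN<\infty$.

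The main obstacle is to produce a $P$-eigenvector inside a closely related finite-dimensional space and contradict \eqref{eq:AA}. Fix $\delta>0$ with $T-2\delta \geq T/4$ and a bump $\rho \in C^\infty_c([-\delta,\delta])$ with $\int\rho=1$, and set
\[
\tilde\varphi := U(\delta)\int_{-\delta}^{\delta}\rho(s)U(s)\varphi_\infty\,ds.
\]
As $\delta\to 0$ this converges to $\varphi_\infty$ in $\cH$, so $\tilde\varphi \neq 0$ for $\delta$ small; it lies in $D(P^\infty)$ by Schwartz regularization of the unitary group; and a direct calculation using $AU(t)\varphi_\infty\equiv 0$ on $[0,T]$ shows $AU(t)\tilde\varphi = 0$ for $t\in[0,T-2\delta]$. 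Thus $\tilde\varphi \in \cN_{T-2\delta} := \{\varphi:AU(t)\varphi = 0\ \forall t\in [0,T-2\delta]\}$, a space to which the previous finite-dimensionality argument applies just as well, since \eqref{eq:AA1} is assumed valid at $t = T-2\delta \in [T/4,T]$.

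Since $\tilde\varphi \in D(P^\infty)$, we may differentiate $AU(t)\tilde\varphi \equiv 0$ in $t$ to obtain $AU(t)P^k\tilde\varphi \equiv 0$ for every $k$, so the orbit $\{P^k\tilde\varphi\}_{k\geq 0}$ lies in the finite-dimensional space $\cN_{T-2\delta}$. By linear dependence there exists a nonzero polynomial $p$ with $p(P)\tilde\varphi = 0$, which expresses $\tilde\varphi = \sum_j \psi_j$ as a finite linear combination of $P$-eigenvectors, $P\psi_j = \alpha_j \psi_j$ with distinct $\alpha_j$. The identity $\sum_j e^{-it\alpha_j} A\psi_j \equiv 0$ on $[0,T-2\delta]$ combined with linear independence of finitely many distinct characters forces $A\psi_j = 0$ for each $j$; then \eqref{eq:AA} with $\lambda = \alpha_j$ gives $\|\psi_j\| \leq C(\alpha_j)(\|(P-\alpha_j)\psi_j\| + \|A\psi_j\|) = 0$, so $\tilde\varphi = 0$, the desired contradiction.

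The constant $K$ can be made explicit by rendering each step quantitative: a high-frequency threshold $\Lambda_0 \sim (2C_2)^{1/(2\epsilon)}$ absorbs the $\cH_P^{-\epsilon}$ remainder into modes with $\lambda_n \leq \Lambda_0$, and the resulting finite-dimensional observability is governed by the smallest singular value of the Gram matrix $G_{jk}=\int_0^T e^{-it(\lambda_j - \lambda_k)}\langle A\varphi_j, A\varphi_k\rangle\,dt$ ranging over eigenvalues $\lambda_j,\lambda_k \leq \Lambda_0$, whose invertibility is ensured by the unique continuation constants $C(\lambda)$ in \eqref{eq:AA}. Thus $K$ admits a formula in terms of $C_1$, $C_2$, $\epsilon$, $T$, and these finitely many pieces of spectral data.
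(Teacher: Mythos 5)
Your argument is the classical Bardos--Lebeau--Rauch uniqueness--compactness scheme, and as a proof that \emph{some} constant $K$ exists it is correct: the extraction of a nonzero weak limit $\varphi_\infty$ with $AU(t)\varphi_\infty\equiv 0$, the smoothing $\tilde\varphi\in D(P^\infty)$ satisfying $AU(t)\tilde\varphi=0$ on $[0,T-2\delta]$, the finite dimensionality of $\cN_{T-2\delta}$ via the degenerate form of \eqref{eq:AA1} and compactness of $\cH\hookrightarrow \cH^{-\epsilon}_P$, the reduction of $\tilde\varphi$ to a finite sum of eigenvectors, and the combination of linear independence of characters with \eqref{eq:AA} are all sound. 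This is, however, a genuinely different route from the paper's, and it misses the actual content of the statement: the theorem asserts an \emph{explicitly computable} $K$, and this appendix exists precisely to replace the compactness--contradiction argument, which by its nature produces no constant at all. Your main proof is by contradiction, and only the final paragraph gestures at a quantitative version.

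That sketch has a concrete gap. Granting that the Gram matrix $G$ over the modes $\lambda_j\le\Lambda_0$ is positive definite (this does follow from independence of the exponentials plus \eqref{eq:AA}), you observe $AU(t)\varphi$, not $AU(t)\Pi\varphi$, where $\Pi$ is the low-mode projector; replacing one by the other costs a term of size $T\|A\|^2\|(I-\Pi)\varphi\|^2_{\cH}$. Thus the low-mode bound reads $\|\Pi\varphi\|^2\lesssim \sigma_{\min}(G)^{-1}\bigl(\int_0^T\|AU(t)\varphi\|^2\,dt+T\|A\|^2\|(I-\Pi)\varphi\|^2\bigr)$, while the high-mode bound obtained from \eqref{eq:AA1} after absorbing the $\cH^{-\epsilon}_P$ term reads $\|(I-\Pi)\varphi\|^2\lesssim \int_0^T\|AU(t)\varphi\|^2\,dt+C_2\|\Pi\varphi\|^2$. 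The coupling constants $T\|A\|^2\sigma_{\min}(G)^{-1}$ and $C_2$ carry no smallness, so these two inequalities cannot be combined: the scheme is circular as stated. Handling this interaction between frequency bands is exactly what the paper's proof is organized around: a three-band decomposition $\lambda_n\le N$, $N<\lambda_n<M$, $\lambda_n\ge M$ with $N\sim C_2^{1/(2\epsilon)}$, observation through a time cutoff on a separated window so that the low/very-high cross terms are $O(M^{-1})$ by non-stationary phase (estimate \eqref{eq:38}), and a Vandermonde time-shift construction \eqref{eq:39}--\eqref{eq:pip2} that eliminates the intermediate band, for which no spectral separation is available. Without some version of these steps, or another mechanism making the band interaction small, the ``explicitly computable $K$'' part of the statement is not established, even though the qualitative estimate \eqref{eq:AA2} is.
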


\medskip
\noindent
{\bf Remarks.} 1. We do not compute the constant explicitely 
but the construction in the proof certainly allows that.

2. In the applications in this paper 
\[ P = - \Delta + V, \ \ \ \cH = L^2 ( \T^2) , \ \ \  A =
\bbbone_\Omega , \ \ \ \Omega \subset \T^2 \ \text{open}, \]
 or 
\[  P = - ( \partial_x + i k)^2 +
W , \ \  \cH = L^2 ( \T^1 ) , \ \  A = \bbbone_\omega,
\ \ \ \omega \subset \T^1 \ \text{open}, \]

\begin{proof}
We start by observing that \eqref{eq:AA1} and the definition
\eqref{eq:defHP} imply that for $ N > (2 C_2 )^{1 /{
    \epsilon } } $,
\begin{gather}
\label{eq:A1}
\begin{gathered}
   \| ( I - \Pi)  \varphi \|^2 \leq 2 C_1 \int_0^t \| A U ( s ) ( I - \Pi) \varphi \|^2
ds   ,  \ \ \ 
\frac T 4  \leq t \leq T , \\
\Pi \varphi := \sum_{ \lambda_n \leq N} \langle \varphi, \varphi_n \rangle  \varphi_n .
\end{gathered}
 \end{gather}
For reasons which will be explained below we will use this inequality
for $ t = T/4 $ and apply it $ \varphi $ replaced by $  U ( T/2 )
\varphi $: 
\begin{equation}
\label{eq:A2} 
  \| ( I - \Pi ) \varphi \|^2 \leq 2 C_1 \int_{T/2}^{3T/4} \| A U ( t )
( I - \Pi )   \varphi \|^2 dt . \end{equation}

We will show that the same estimate is true for $ \Pi \varphi $. 
For that let $ \mu_1 < \mu_2 <  \cdots < \mu_{r_1}  $ be the enumeration of  $ \{
\lambda_n \}_{n=1}^{K_1}  $ and define
\[ \psi_r := \sum_{\lambda=n = \mu_r } \langle \varphi , \varphi_n
\rangle  \varphi_n , \]
so that
\[ U ( t ) \Pi \varphi = \sum_{ n \leq K_1 } e^{ - i \lambda_n t } \langle \varphi , \varphi_n
\rangle  \varphi_n  = \sum_{ r=1}^{r_1} e^{i \mu_r t } \psi_r .\]
Since $ ( P - \mu_r ) \psi_r = 0 $,  we can apply \eqref{eq:AA}
to obtain 
\begin{equation}
\label{eq:psir}  \| \psi_r \|  \leq K_2 \| A \psi_r \| , \ \ K_2 = \max_{ n
  \leq K_1 } C ( \lambda_n ) . \end{equation}

The functions $ t \mapsto e^{i \mu_r
  t  } $, $ r = 1, \cdots, r_1 $, are linearly independent there exists a constant 
\[ K_3 = K_3 ( \mu_1, \cdots, \mu_{r_1} , T ) \]
such that for any $ f_1, \cdots , f_{r_1} \in \cH  $, 
\begin{equation}
\label{eq:A21} 
\int_{T/2}^{3T/4} \| \sum_{ r =1}^{r_1} e^{ i \mu_r t } f_r  \| ^2 dt \geq
K_3  \sum_{ r=1}^{r_1}  \| f_r \| ^2 , 
\end{equation} 
as both sides provide equivalent norms on $ \times_{ r=1}^{r_1} \cH
$. 

Applying \eqref{eq:A21} with $ f_r = A \psi_r $ and \eqref{eq:psir} gives
\begin{equation}
\label{eq:36}
\begin{split} 
\|A U ( t ) \Pi \varphi \|_{ L^2 ( (T/2, 3T/4 ); \cH) }^2 & = 
\int_{T/2}^{3T/4} \| \sum_{ r=1}^{r_1} A \psi_r e^{ i \mu_r t}
\|^2 dt \geq K_2  \sum_{ r=1}^{r_1}  \| A \psi_r \|^2 
\\
& \geq K_2 K_3 \sum_{ r=1}^{r_1} \| \psi_r \|^2 = K_2 K_3 \| \Pi \varphi \| . 
\end{split}
\end{equation}

The combination of \eqref{eq:A2} and \eqref{eq:36} do not yet provide 
the estimate \eqref{eq:AA2}. However if 
\[ \Pi_M \varphi := \sum_{\lambda_n \leq M } \langle \varphi , \varphi_n
\rangle \varphi_n , \]
then, for $ M $ sufficiently large we have
\begin{equation}
\label{eq:38}
\begin{split}
& \| A  U ( t ) ( I - \Pi_M + \Pi ) \varphi \|_{ L^2 ( [0 , T ] ; \cH )}
^2 
\geq \\ 
& \ \ \ \ \ \ \ \ \ K_2^2 K_3^2 \| \Pi \varphi \| ^2 + ( 1/ 4 C_1^2 ) \| (I -\Pi_M )
\varphi \| ^2 - K_4 M^{-1} \| \varphi\|^2 .
\end{split}
\end{equation}
where $ K_4 $ will be defined below. In fact, we choose
$ \eta \in \CIc ( ( 0 , T ) ) $ equal to $ 1 $ on $ [T/2, 3T/4]$, 
then the left hand side in \eqref{eq:38} is estimated
from below by 
\[ \begin{split}
\int \| A U ( t ) ( I - \Pi_M + \Pi ) \varphi \| ^2 \eta ( t ) dt &
=  \int \| A U ( t ) ( I - \Pi_M ) \varphi \| ^2 \eta ( t ) dt 
+ 
\int \| A U ( t ) \Pi \varphi \| ^2 \eta ( t ) dt \\ & \ \ \ \ - \,
2 \Re \int \langle A U ( t ) ( I - \Pi_M ) \varphi, A U ( t ) \Pi \varphi
\rangle \eta ( t ) dt .\end{split}\]
We can apply \eqref{eq:A1} and \eqref{eq:36} to estimate the
first to terms from below. Since
\[ \begin{split}
& 2 \Re \int \langle A U ( t ) ( I - \Pi_M ) \varphi, A U ( t ) \Pi \varphi
\rangle  \eta ( t )  dt = \\
&  \ \ \ 2 \Re \sum_{ \lambda_n < N } \sum_{
  \lambda_m > M } \langle \varphi , \varphi_n \rangle  
\langle \varphi_m, \varphi \rangle \langle A \varphi_n,
A \varphi_m \rangle \int e^{ i ( \lambda_n - \lambda_m ) t }
\eta ( t ) dt \\
& \leq C_P \| A\|^2 \sum_{ \lambda_n < N } \sum_{
  \lambda_m > M }  | \lambda_n - \lambda_m |^{-P} \| \varphi\|^2 \leq 
K_4 M^{-1}\| \varphi\|^2 , 
\end{split}
\]
if we choose $ P $ sufficiently large. This proves \eqref{eq:38}

We now have to deal with the remaining eigenfuctions corresponding to 
$ N \leq \lambda_n < M $.  For that let  $ \mu_{r_1 + 1}  < \cdots <
\mu_{r_2 } $ be the enumeration of these eigenvalues.  Put
\begin{equation}
\label{eq:tau}   \tau = \frac T { 10 r_2 } .\end{equation}
The Vandermonde matrix $ ( e^{ i \mu_r  p \tau  } )_{ 1 \leq r \leq r_2, 1 \leq
  p \leq r_2 } $ is non-singular and hence we can find scalars $
\sigma_p $,  $ \max | \sigma_p | = 1$, satisfying
\begin{equation}
\label{eq:39}  \begin{split}
& \sum_{p=1}^{r_2} \sigma_p e^{ i \mu_r p \tau} = 0  \ \ \text{ for }
 \ r \leq r_1 , \ \ \
 | \sum_{p=1}^{r_2} \sigma_p e^{ i \mu_r p \tau} | \geq K_5 \ \ \text{
  for }  r_1 < r \leq r_2 , 
\end{split}
\end{equation}
with a constant $ K_5 = K_5 ( \mu_1, \cdots, \mu_{r_2}, T  )
$. (Note the implicit dependence on $ M$.)

If we define 
\begin{equation}
\label{eq:deftp}  \tilde \varphi = \sum_{ \lambda_n > N } \left( \sum_{r=1}^{r_2}
  \sigma_p e^{ i \lambda_n p \tau } \right) \langle \varphi, \varphi_n
\rangle \varphi_n , \end{equation}
then 
\begin{equation}
\label{eq:pip}  ( I - \Pi)  \tilde \varphi = \tilde \varphi ,  \ \
\text{and} \ \ 
U ( t ) \tilde \varphi  = \sum_{ r = 1}^{r_2 } \sigma_p U ( t + p \tau
) \varphi . 
\end{equation}
Applying \eqref{eq:A1}, \eqref{eq:39} and the definition
\eqref{eq:deftp} gives
\[  \begin{split}4 C_1^2 \| A U ( t ) \widetilde \varphi \|_{L^2 ( [T/2, 3T/4]; \cH) }^2
& \geq  \| \tilde \varphi \|^2 \geq 
 \sum_{ N \leq \lambda_n < M } \left| \sum_{r=1}^{r_2} \sigma_p e^{ i
        \lambda_n p \tau} \right|^2  | \langle \varphi, \varphi_n
    \rangle|^2  \\
& \geq K_5^2 \| ( \Pi_M - \Pi ) \varphi \|^2 . 
\end{split}
\]
The choice of  $ \tau $ in \eqref{eq:tau} and \eqref{eq:pip} show that
\begin{equation}
\label{eq:pip2}
\|A  U ( t ) \varphi \| \geq \frac{ K_5} { 2 C_1 r_2 } \| ( \Pi_M -
\Pi ) \varphi \|^2 .
\end{equation}
This gives,
\[ \begin{split} \| A  U ( t ) ( I - \Pi_M + \Pi ) \varphi \|_{ L^2 ( [0 , T ] ;
  \cH)} & \leq \|A U ( t )\varphi\|_{ L^2 ( [0 , T ] ; \cH)}
+ \sqrt T \| ( \Pi_M - \Pi ) \varphi \|  \\
& \leq  \left( 1 + \frac{ 2 \sqrt T r_2 C_1 }{ K_5} \right) \|A  U ( t ) \varphi
\|_{ L^2 ( [ 0 , T ] , \cH ) }, \end{split} \]
which combined with \eqref{eq:38} and \eqref{eq:pip2} produces
\[  \begin{split} \left( 1 + \frac{ 2 ( \sqrt T + 1 ) r_2 C_1 }{ K_5} \right) \|A  U ( t ) \varphi
\|_{ L^2 ( [ 0 , T ] , \cH ) } & \geq  K_2 K_3 \| \Pi \varphi \| 
+ 1/ (2 C_1) \| (I -\Pi_M ) \varphi \| \\ 
& \ \ \ + \, \| ( \Pi_M - \Pi) \varphi \| - \sqrt { K_4/ M} \| \varphi\|^2 \\
& \geq ( K_6  - \sqrt { K_4 /  M } ) \| \varphi \|.
\end{split} \]
Since $ K_6 $ and  $ K_4 $ are independent of $ M $ we obtain 
\eqref{eq:AA2} by choosing $ M $ large enough.
\end{proof}
\section{ Proof of Lemma~\ref{geom}}\label{app.B}
This is a purely geometric result  which does not involves integer
points. It is the consequence of the fact that the circle is curved
but we prove it by explicit calculations. 

We start with the case where $\gamma =1$ (recall that in Lemma~\ref{geom} the modulus is defined by $|(x_1, x_2)|^2 = x_1^2 + \gamma x_2 ^2$).  
We perform a change of variables $x\mapsto  x h $, and denote by
$\epsilon = \kappa^2 h^2 $.
We are reduced to proving that for 
\begin{equation}
\begin{gathered}
   \mathcal{B}_{\epsilon,\alpha}= \{ z\in \CC \; : \; \Re z \geq 0, \ \Im z
   \geq 0, \  | |z| - 1 | \leq \epsilon, \ \arg (z) \in [\alpha \sqrt{\epsilon}, (\alpha+1)\sqrt{\epsilon}) \}.
\end{gathered}
\end{equation} 
we have 
\begin{lem} There exists $\epsilon _0 >0$ and $Q>0$ such that for any $0< \epsilon\leq \epsilon _0$,  we have 
\begin{equation}
\begin{gathered}
\forall \alpha_j \in \{ 0, 1, \dots, N_{\epsilon}\}, j=1, \dots 4, \ \
N_\epsilon := \left[ \frac{ \pi } { 2 \sqrt \epsilon } \right] \\
 (\mathcal{B}_{\epsilon,\alpha_1}+\mathcal{B}_{\epsilon,\alpha_2})\cap (\mathcal{B}_{\epsilon,\alpha_3}+\mathcal{B}_{\epsilon,\alpha_4})\neq \emptyset \\
\Longrightarrow \ | \alpha_1 - \alpha_3| + | \alpha_2- \alpha_4 |\leq Q \ 
\ \text{ or } \ \ | \alpha_1 - \alpha_4| + | \alpha_2- \alpha_3 | \leq Q 
\end{gathered}
\end{equation}
\end{lem}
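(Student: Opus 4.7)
The strategy is to work in polar coordinates and exploit the curvature of the unit circle. I would parametrize a point of $\mathcal{B}_{\epsilon,\alpha}$ as $z = (1+t)e^{i\theta}$ with $|t| \leq \epsilon$ and $\theta \in [\alpha\sqrt{\epsilon}, (\alpha+1)\sqrt{\epsilon})$, and compute a sum of two such points explicitly:
\[
(1+t_1)e^{i\theta_1} + (1+t_2)e^{i\theta_2} = e^{i(\theta_1+\theta_2)/2}\Bigl[(2+t_1+t_2)\cos\tfrac{\theta_1-\theta_2}{2} + i(t_1-t_2)\sin\tfrac{\theta_1-\theta_2}{2}\Bigr].
\]
Since everything lives in the first quadrant, $(\theta_1-\theta_2)/2 \in [-\pi/4,\pi/4]$ and $\cos((\theta_1-\theta_2)/2) \geq \sqrt 2/2$, so the sum has a well-defined polar form with
\[
|z_1 + z_2| = 2\cos\tfrac{\theta_1-\theta_2}{2} + O(\epsilon), \qquad \arg(z_1+z_2) = \tfrac{\theta_1+\theta_2}{2} + O(\epsilon),
\]
uniformly in the choice of $t_j$ and of $\theta_j$ within each sector.

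Now assume $(\mathcal{B}_{\epsilon,\alpha_1}+\mathcal{B}_{\epsilon,\alpha_2}) \cap (\mathcal{B}_{\epsilon,\alpha_3}+\mathcal{B}_{\epsilon,\alpha_4}) \neq \emptyset$, so a common point exists. Matching arguments gives $(\alpha_1+\alpha_2-\alpha_3-\alpha_4)\sqrt\epsilon/2 = O(\sqrt\epsilon)$, i.e.\ $|\alpha_1+\alpha_2-\alpha_3-\alpha_4| \leq Q_1$. Matching moduli, using $\cos a - \cos b = -2\sin\tfrac{a+b}{2}\sin\tfrac{a-b}{2}$ with $a = (\alpha_1-\alpha_2)\sqrt\epsilon/2$, $b = (\alpha_3-\alpha_4)\sqrt\epsilon/2$, and the bound $|\sin x| \sim |x|$ on $[-\pi/4,\pi/4]$, yields
\[
\bigl|(\alpha_1-\alpha_2)^2 - (\alpha_3-\alpha_4)^2\bigr|\,\epsilon = O(\epsilon),
\]
i.e.\ $\bigl|\,|\alpha_1-\alpha_2|^2-|\alpha_3-\alpha_4|^2\,\bigr| \leq Q_2$. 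Since $|A^2-B^2| = (|A|+|B|)\cdot\bigl||A|-|B|\bigr|$, this second bound implies $\bigl|\,|\alpha_1-\alpha_2|-|\alpha_3-\alpha_4|\,\bigr| \leq Q_2'$ (the estimate is trivial when both are $O(1)$, and strengthens when the sum is large).

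The conclusion follows by a case split on the signs of $A_{12} := \alpha_1-\alpha_2$ and $A_{34} := \alpha_3-\alpha_4$. If they have the same sign, then $A_{12}-A_{34} = O(1)$, and combining with $\alpha_1+\alpha_2 - \alpha_3 - \alpha_4 = O(1)$ gives $\alpha_1-\alpha_3 = O(1)$ and $\alpha_2-\alpha_4 = O(1)$. If the signs are opposite, then $A_{12}+A_{34} = O(1)$, which combined with the sum equation produces $\alpha_1-\alpha_4 = O(1)$ and $\alpha_2-\alpha_3 = O(1)$. Choosing $Q$ to dominate the finite collection of implicit constants $Q_1, Q_2'$ completes the proof. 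The main obstacle is confirming that all $O(\epsilon)$ errors are genuinely uniform in the four indices (including in the regime where some $\alpha_j$ approach $N_\epsilon$, where angles near $\pi/4$ make $\sin((a+b)/2)$ bounded below; there one actually extracts the much stronger $|A_{12}-A_{34}| = O(\sqrt\epsilon)$, forcing $A_{12}=A_{34}$ for $\epsilon$ small, so that case is even easier).
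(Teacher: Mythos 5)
Your overall strategy is the same as the paper's: write the sum in polar form, match the argument and the modulus of a common point of the two sum sets, and exploit the curvature through the cosine of the half-angle difference. The argument-matching step and the final case split on the signs of $A:=\alpha_1-\alpha_2$ and $B:=\alpha_3-\alpha_4$ (which correctly produces the ``or'' alternative) are fine. The gap is in the modulus step. You apply $\cos a-\cos b$ with the \emph{discretized} angles $a=(\alpha_1-\alpha_2)\sqrt\epsilon/2$, $b=(\alpha_3-\alpha_4)\sqrt\epsilon/2$, but each $\theta_j$ is only located by $\alpha_j$ up to an error of size $\sqrt\epsilon$, and the half-differences themselves can be of order $1$ (the indices run up to $N_\epsilon\sim\epsilon^{-1/2}$). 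Replacing the true half-differences by $a,b$ therefore perturbs $a^2-b^2$ by $O(\sqrt\epsilon)$, not $O(\epsilon)$, so your displayed claim $\bigl|(\alpha_1-\alpha_2)^2-(\alpha_3-\alpha_4)^2\bigr|\,\epsilon=O(\epsilon)$ does not follow; what follows is only $|A^2-B^2|=O(\epsilon^{-1/2})$ (and the $O(1)$ claim is in fact false for configurations with $|A|,|B|$ of order $N_\epsilon$ and $|A-B|=1$ or $2$, which do occur). With the honest bound, the division step no longer closes: nothing in your chain excludes, say, $|A|\sim\epsilon^{-1/4}$ and $B=0$, which is compatible with $(|A|+|B|)\bigl||A|-|B|\bigr|\lesssim\epsilon^{-1/2}$ and with the argument bound, yet violates the conclusion. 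The parenthetical ``trivial when both are $O(1)$'' does not cover this intermediate regime, and the closing remark that near the extreme angles one forces $A_{12}=A_{34}$ likewise overlooks the $\sqrt\epsilon$-sized discretization error (one can only ever conclude $|A_{12}-A_{34}|=O(1)$ there, which is enough).

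The repair is to take the square root \emph{before} discretizing, which is exactly what the paper does. Work with the true half-differences $p=(\theta_1-\theta_2)/2$, $q=(\theta_3-\theta_4)/2$, normalized to be nonnegative by swapping within each pair (this swap is what encodes the alternative in the conclusion). The modulus matching gives $2\bigl|\sin\frac{p+q}{2}\sin\frac{p-q}{2}\bigr|\le C\epsilon$, hence $(p+q)\,|p-q|\le C'\epsilon$; since $p+q\ge|p-q|$ this yields $|p-q|\le C''\sqrt\epsilon$, i.e. $\bigl|\,|\theta_1-\theta_2|-|\theta_3-\theta_4|\,\bigr|=O(\sqrt\epsilon)$, and only now does the $\sqrt\epsilon$-sized index error translate into $\bigl|\,|A|-|B|\,\bigr|=O(1)$. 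Combined with your bound $|\alpha_1+\alpha_2-\alpha_3-\alpha_4|=O(1)$ and your sign case split, this gives the lemma; as written, though, your modulus step has a genuine quantitative gap rather than a mere technicality.
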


\begin{proof}
We first observe that it is enough to prove 
the lemma with the condition $ | | z | -1 | < \epsilon $ 
replaced by $ 0 \leq | z | -1 \leq \epsilon $ in
the definition of $ {\mathcal B}_{\epsilon, \alpha} $:
$  0 \leq 1 - |z| \leq \epsilon $ is the same as
$ 0 \leq |z|/(1-\epsilon) - 1 \leq \epsilon/( 1 - \epsilon) $.

Let $z_j = \rho_j e^{i\theta_j}\in \mathcal{B}_{\epsilon, \alpha_j}$,
$ 1 \leq j \leq 4$, be such that $z_1+ z_2 = z_3 + z_4$. 
By possibly exchanging $z_1$ and $z_2$ 
we can assume $\theta_1 \geq \theta_2$ and similarly that 
$ \theta_3 \geq \theta_4$. In particular,
\begin{equation}
\label{eq:B.new}
\textstyle{ \frac{\theta_1- \theta_2 } 2 } \in [0 ,  \textstyle{ \frac \pi 4} ], 
\ \   \frac{\theta_3- \theta_4} 2 \in [0 , \textstyle{ \frac \pi 4}]. 
\end{equation}

Since $\rho_j \in [1, 1+ \epsilon]$, we have 
\[  | e^{i\theta_1} + e^{i\theta_2} - e^{i\theta_3} - e^{i\theta_4} | \leq 4 \epsilon , \]
which is the same as 
\begin{equation}\label{eq.3}|e^{\frac i 2( { \theta_1 + \theta_2} )} \cos ( 
\textstyle{\frac{ \theta_1 - \theta_2} 2} )- e^{\frac i 2 ({ \theta_3 + \theta_4} )} 
\cos ( \textstyle{\frac{ \theta_3 - \theta_4} 2})| \leq 
2\epsilon
\end{equation}
On the other hand,
\[ \begin{split}
|e^{ \frac i 2( { \theta_1 + \theta_2} )} 
\cos ( \textstyle{\frac{ \theta_1 - \theta_2} 2})- 
e^{ \frac i 2 ( { \theta_3 + \theta_4} )} \cos ( 
\textstyle{\frac{ \theta_3 - \theta_4} 2} )| & =  |e^{\frac i 2  
( { \theta_1 + \theta_2 - \theta_3 - \theta_4} )} \cos ( 
\textstyle{\frac{ \theta_1 - \theta_2} 2})- \cos ( 
\textstyle{\frac{ \theta_3 - \theta_4} 2} )|\\
& \geq| \sin( \textstyle{\frac{ \theta_1 + \theta_2 - \theta_3 - \theta_4 } 2 })\cos ( \textstyle{\frac{ \theta_1 - \theta_2} 2}) |  
\end{split} .\]
Since \eqref{eq:B.new} implies that $ \cos (
{\frac{ \theta_1 - \theta_2} 2}) \geq 1/\sqrt 2 $, we obtain 
from \eqref{eq.3} that
$$ |\sin( \textstyle{\frac{ \theta_1 + \theta_2 - \theta_3 - \theta_4 } 2} )| \leq 2\sqrt{ 2} \epsilon.$$
We also have 
$  \frac{ \theta_1 + \theta_2 - \theta_3 - \theta_4 } 2 \in [ - \frac \pi 2, \frac \pi 2]  $ 
and as $ |\sin \theta | \geq 2 |\theta|/\pi $ for 
$ -\pi/2 \leq \theta \leq \pi/2 $, 
we conclude that
\begin{equation}
\label{eq:B.new1}  \left|\textstyle{\frac{ \theta_1 + \theta_2 - \theta_3 - \theta_4 } 2} \right| \leq  \pi \sqrt{2} \epsilon  .
\end{equation}
We assumed that $ z_j = \rho_j e^{ i \theta_j } \in 
{\mathcal B }_{\epsilon,\alpha_j } $ and that means that
$ 0 \leq \theta_j - \sqrt{ \epsilon} \alpha_j < \sqrt{ \epsilon}$.
Hence \eqref{eq:B.new1} gives
 \begin{equation}\label{estimee1}
 \left| \alpha_1 + \alpha_2 - \alpha_3 - \alpha_4 \right| \leq C \sqrt{ \epsilon} + 2 \leq 3,   
 \end{equation}
provided that  $\epsilon >0$ small enough.

 Going back to \eqref{eq:B.new} and \eqref{eq.3}
we get with $p = \frac{ \theta_1- \theta_2} 2$, $q=\frac{ \theta_3- \theta_4} 2$
 \begin{equation} | \cos p 
 - \cos q | = 2 | \sin\left( 
\textstyle{\frac {p+q} 2 }\right) \sin \left( 
\textstyle{\frac {p-q} 2} \right)| \leq 2 \epsilon
 \end{equation}
As, $p,q \in [0, \frac \pi 4]$ we get 
 $$ | \textstyle{\frac{ (p+q)} 2  \frac {(p-q) } 2} | \leq  
\textstyle{\frac{ \pi ^2} 4 } \epsilon.
 $$
This is the same as (recall that $ 0 \leq \theta_1 - \theta_2 $, 
$ 0 \leq \theta_3 - \theta_4 $)
 \begin{equation} \label{eq.4}
   ( |\theta_1 - \theta_2| - |\theta_3- \theta_4| ) ( |\theta_1 - \theta_2| +| \theta_3 - \theta_4| )   \leq 4 \pi^2 \epsilon
  \end{equation}
and this gives
 \begin{equation}\label{eq.5} 
 |  (\theta_1 - \theta_2) - ( \theta_3 - \theta_4 ) | \leq 
\left(  ( |\theta_1 - \theta_2| - |\theta_3- \theta_4| ) ( |\theta_1 - \theta_2| +| \theta_3 - \theta_4| )  \right)^{\frac12} 
  \leq 2 \pi \sqrt{ \epsilon}
 \end{equation}
Using again the fact that 
$ 0 \leq \theta_j - \sqrt{ \epsilon} \alpha_j < \sqrt{ \epsilon}$
this gives
 \begin{equation}\label{eq.6}
|  (\alpha_1- \alpha_2 ) - ( \alpha_3- \alpha_4)| \leq 2 \pi + 2 
 \end{equation}
 Finally, from~\eqref{estimee1} and ~\eqref{eq.6} we obtain
 $$ | \alpha_1 - \alpha_3|\leq \pi + \textstyle{\frac 5 2} ,    \ \ \ 
| \alpha_2 - \alpha_4|\leq \pi + \textstyle{\frac 5 2} $$
 which proves Lemma~\ref{geom}  in the case $\gamma =1$ (notice that here only the first term in the alternative is possible which follows from the assumption $\theta_1 \geq \theta_2, \theta_3 \geq \theta_4$). The general case follows by applying the transformation 
$ (x_1, x_2)\in \R^2 \mapsto (x_1, \sqrt{ \gamma} x_2) \in \R^2.$
  \end{proof}

\end{document}